\documentclass[11pt]{article}
\usepackage{graphicx} 
\usepackage{geometry}
\usepackage{todonotes} 
\usepackage{graphicx} 
\usepackage{tikz}
\usetikzlibrary{patterns}
\usepackage{float} 
\usetikzlibrary{shapes}
\usetikzlibrary{decorations.pathreplacing}
\usetikzlibrary{decorations.pathmorphing}
\usetikzlibrary{positioning}
\usepackage{xcolor}
\usepackage{appendix}
\geometry{verbose,tmargin=2.1cm,bmargin=2.1cm,lmargin=2.2cm,rmargin=2.2cm}
\usepackage{enumitem}
\usepackage{natbib}
\usepackage{booktabs}
\usepackage{amsthm} 
\usepackage{amsmath,bm}
\usepackage{amssymb}
\usepackage{hyperref}
\usepackage{verbatim}
\usepackage[capitalise]{cleveref}
\newtheorem{theorem}{Theorem}[section]
\newtheorem{lemma}[theorem]{Lemma}
\newtheorem{observation}[theorem]{Observation}

\newtheorem{claim}[theorem]{Claim}
\newtheorem{proposition}[theorem]{Proposition}
\newtheorem{fact}[theorem]{Fact}

\theoremstyle{definition}
\newtheorem{remark}[theorem]{Remark}
\newtheorem{definition}[theorem]{Definition}
\newtheorem{construction}[theorem]{Construction}



\newenvironment{pf}[1][Proof]{%
    \begingroup
    \begin{proof}[#1]%
}{%
    \end{proof}
    \endgroup
}

\title{Stability with minuscule structure for chromatic thresholds}
\author{Jaehoon Kim\thanks{Department of Mathematical Sciences, KAIST, South Korea.        Email: \texttt{jaehoon.kim@kaist.ac.kr}. Supported by the National Research Foundation of Korea (NRF) grant funded by the Korea government(MSIT) No. RS-2023-00210430.}
\and
Hong Liu\thanks{Extremal Combinatorics and Probability Group (ECOPRO), Institute for Basic Science (IBS), Daejeon, South Korea. Email: \texttt{hongliu@ibs.re.kr}. Supported by IBS-R029-C4.}
\and
Chong Shangguan\thanks{Research Center for Mathematics and Interdisciplinary Sciences, Shandong University, and Frontiers Science Center for Nonlinear Expectations, Ministry of Education, Qingdao, China. Email: \texttt{theoreming@163.com}. Supported by National Natural Science Foundation of China under Grant No. 12231014.}
\and
Guanghui Wang\thanks{School of Mathematics, and  State Key Laboratory of Cryptography and Digital Economy Security, Shandong University, Jinan, China.  Email: \texttt{ghwang@sdu.edu.cn}. Supported by the Natural Science Foundation of China (No. 12231018).}
\and
Zhuo Wu\thanks{Departament de Matemàtiques, Universitat Politècnica de Catalunya (UPC),
Carrer de Pau Gargallo 14, 08028 Barcelona, Spain. Email: \texttt{zhuo.wu@upc.edu}. Supported by the bilateral AEI+DFG research project PCI2024-155080-2: SRC-ExCo – Structure, Randomness and Computational Methods in Extremal Combinatorics, and the PID2023-147202NB-I00 (COCOA: COntemporary COmbinatorics and its Applications), all funded by MICIU/AEI/10.13039/501100011033. }
\and
Yisai Xue\thanks{School of Mathematics and Statistics, Ningbo University, Ningbo, China and Extremal Combinatorics and Probability Group (ECOPRO), Institute for Basic Science (IBS), Daejeon, South Korea. Email: \texttt{xueyisai@nbu.edu.cn}. Supported by  National Natural Science Foundation of China (No. 12501486) and IBS-R029-C4.}}
\date{}

\begin{document}

\maketitle

\begin{abstract}
The chromatic threshold $\delta_\chi(H)$ of a graph $H$ is the infimum of $d>0$ such that the chromatic number of every $n$-vertex $H$-free graph with minimum degree at least $d n$ is bounded by a constant depending only on $H$ and $d$. Allen, B{\"o}ttcher, Griffiths, Kohayakawa, and Morris determined the chromatic threshold for every $H$; in particular, they showed that if $\chi(H)=r\ge 3$, then $\delta_\chi(H) \in\{\frac{r-3}{r-2},~\frac{2 r-5}{2 r-3},~\frac{r-2}{r-1}\}$. While the chromatic thresholds have been completely determined, rather surprisingly the structural behaviors of extremal graphs near the threshold remain unexplored. 

In this paper, we establish the stability theorems for chromatic threshold problems. We prove that every $n$-vertex $H$-free graph $G$ with $\delta(G)\ge (\delta_\chi(H)-o(1))n$ and $\chi(G)=\omega(1)$ must be structurally close to one of the extremal configurations.
Furthermore, we give a stronger stability result when $H$ is a clique, showing that $G$ admits a partition into independent sets and a small subgraph on sublinear number of vertices. 
We show that this small subgraph has fractional chromatic number $2+o(1)$ and is homomorphic to a Kneser graph defined by subsets of a logarithmic size set; both these two bounds are best possible. 
This is the first stability result that captures the lower-order structural features of extremal graphs.

We also study two variations of chromatic thresholds. Replacing chromatic number by its fractional counterpart, we determine the fractional chromatic thresholds for all graphs. Another variation is the bounded-VC chromatic thresholds, which was introduced by  Liu, Shangguan, Skokan, and Xu very recently. Extending work of {\L}uczak and Thomass{\'e} on the triangle case, we determine the bounded-VC chromatic thresholds for all cliques.
\end{abstract}

\section{Introduction} 
The \textit{chromatic number} of a graph $G$, denoted by $\chi(G)$, is the minimum number of colors needed to color its vertices so that adjacent vertices do not get the same color. The chromatic number is a central concept in graph theory. In the 1940s, Zykov \cite{zykov1949some} and Tutte \cite{tutte} first constructed triangle-free graphs with unbounded chromatic number. Later, Erd\H{o}s in 1959 \cite{erdos1959graph} extended this result, showing that for every $k,\ell\in\mathbb{N}$, there exist graphs with girth at least $\ell$ and chromatic number at least $k$ by a surprising probabilistic construction. 
However, all these constructions are sparse graphs with low minimum degrees relative to the number of vertices.

In 1973, Erd\H{o}s and Simonovits \cite{erdos1973valence} raised the problem of bounding the chromatic number of triangle-free graphs with large minimum degree. They introduced what is now known as the \emph{chromatic threshold} of a graph $H$:
\begin{align*}
\delta_\chi(H):=
& \inf \{d: \exists~C=C(H, d)\text{ such that if $G$ is a graph on $n$ vertices}, \\
& \text{with $\delta(G) \geq d n$ and $H \not \subseteq G$, then $\chi(G) \leq C$}\}.
\end{align*}
Based on a construction by Hajnal, Erd\H{o}s and Simonovits \cite{erdos1973valence} conjectured that $\delta_\chi(K_3)=1/3$. Over the past 50 years, the chromatic threshold problem has been one of the central topics in extremal graph theory. 
The conjecture of Erd\H{o}s and Simonovits was eventually confirmed in 2002 by Thomassen \cite{thomassen2002chromatic}. Later, Goddard and Lyle \cite{goddard2011dense} and Nikiforov \cite{nikiforov2010chromatic} independently extended the result of Thomassen by showing that $\delta_\chi(K_r)=\frac{2r-5}{2r-3}$ for every $r\ge 3$, and Thomassen \cite{thomassen2007chromatic} showed that the chromatic threshold of every odd cycle of length at least five is 0. 
 Further work on this topic can be found in \cite{brandt1999structure,chen1997triangle,haggkvist1982odd,jin1995triangle,luczak2010coloring,lyle2011chromatic}. 
Building upon ideas and tools developed by previous papers, in a remarkable work, Allen, B{\"o}ttcher, Griffiths, Kohayakawa, and Morris \cite{allen2013chromatic} completely determined the chromatic threshold for every $H$. In particular, they showed that for any graph $H$ with  $\chi(H)=r \geq 3$,  $\delta_\chi(H) \in\{\frac{r-3}{r-2},~\frac{2 r-5}{2 r-3},~\frac{r-2}{r-1}\}.$ We refer the interested readers to also~\cite{bourneuf2025denseneighborhoodlemmaapplications,huang2025interpolating,liu2024beyond} for more recent work related to this topic.

\subsection{Stability for chromatic thresholds}   
The main concern of this paper is to study the \textit{stability} of chromatic thresholds. Stability results demonstrate that graphs with nearly maximum size must be structurally close to extremal graphs. An archetype example of stability result is the one for Tur\'an problem. Let $T_{n,r-1}$ be the {\it Tur\'an graph}, which is the complete balanced $(r-1)$-partite graph on $n$ vertices. Tur\'an theorem \cite{Turan1941} shows that among all $n$-vertex $K_r$-free graphs, $T_{n,r-1}$ has the maximum number of edges, and it is the unique graph with this property.
In 1966, Erd\H{o}s and Simonovits~\cite{erdos1966limit,simonovitsmethod} famously proved that for any graph $H$ with $\chi(H)=r$, any $n$-vertex $H$-free graph $G$ with $e(G) \geq\big(\frac{r-2}{r-1}-o(1)\big)\binom{n}{2}$ can be obtained from $T_{n,r-1}$ by adding or deleting $o(n^2)$ edges. Originating from the work of Erd\H{o}s and Simonovits, stability has become a key topic and important tool in extremal graph theory and it has been extensively studied over the years; see \cite{Balogh_Clemen_Lavrov,furedi2015proof,illingworth2023minimum,kim2019asymptotic,liu2023stability,WangJian} for some recent examples. See also \cite{Tuan} for an example in additive combinatorics. 

In \cite{allen2013chromatic}, Allen et al. introduced three families of extremal graphs (see \cref{subsection2.1}), each corresponding to one value of $\delta_\chi(H)$ listed above (cf.~\cref{thm:morris}). Motivated by these highly structured constructions, we initiate a systematic study of the stability phenomena for chromatic thresholds. Although the chromatic threshold $\delta_\chi(H)$ has been completely determined, surprisingly little is known about the structure of extremal graphs near the threshold. Note that $\delta(G)\ge(\frac{r-2}{r-1}-o(1))n$ implies that $e(G) \geq\big(\frac{r-2}{r-1}-o(1)\big)\binom{n}{2}$, hence the stability theorem for the family of graphs $H$ with $\delta_\chi(H)=\frac{r-2}{r-1}$ follows from the Erd\H{o}s-Simonovits stability theorem. The main concern of this paper is to establish the stability theorems for graphs $H$ with $\delta_\chi(H)\in\big\{\frac{r-3}{r-2},~\frac{2 r-5}{2 r-3}\big\}$, as detailed below.

Let us begin with the most classical case of cliques. To discuss the extremal graphs for $\delta_{\chi}(K_r)$, we need to introduce the Kneser graphs. Let $n, m \in \mathbb{N}$. The \emph{Kneser graph} $\mathrm{KN}(n, m)$ is defined on the vertex set $\binom{[n]}{m}$, where two vertices $A,B\in\binom{[n]}{m}$ are adjacent if and only if $A\cap B=\varnothing$. Roughly speaking, the extremal graphs for $\delta_{\chi}(K_r)$ consists of two parts:
\begin{itemize}
    \item[(\emph{Micro})] one part is a Kneser graph $\mathrm{KN}(m,(\frac{1}{2}-o(1))m)$ on $o(n)$ vertices that contributes to the high chromatic numbner of $G$;
    \item[(\emph{Macro})] the other is a complete $(r-1)$-partite graph with parts ratio $(\frac{2}{2r-3},\ldots,\frac{2}{2r-3},\frac{1}{2r-3})$ that establishes the required minimum degree of $G$. 
\end{itemize} 
Notice that although Kneser graphs play a decisive role in establishing the lower bound of $\delta_{\chi}(K_r)$, they only account for a negligible $o(n)$ vertices in the construction (see~\cref{construction:K_r} for more details). 

Since the construction of Hajnal in the 60s, no other tight constructions are known. One can replace the use of Kneser graph in Hajnal's construction with an appropriate Borsuk graph. However, this is essentially the same construction as such Borsuk graph has independence number almost half of its order and admits a homomorphism to a Kneser graph $\mathrm{KN}(m,(\frac{1}{2}-o(1))m)$ for some $m$. It is not inconceivable that there could be other types of extremal graphs for cliques. 

Our main result reads as follows, showing that all \emph{almost} extremal graphs must essentially look like the one in Hajnal's construction, thereby establishing the stability for $\delta_{\chi}(K_r)$. The fractional chromatic number $\chi_f$ is defined in \cref{sec:preliminarlies}.
Let $K_{t}(n_1,\ldots,n_t)$ denote the complete $t$-partite graph with parts of sizes $n_1, \ldots, n_t$.

\begin{theorem}[Stability for $\delta_{\chi}(K_r)$]\label{thm:Kr}
Let $r\ge 3$ and $G$  be a $K_r$-free graph on $n$ vertices with $\delta(G)\ge\big(\frac{2r-5}{2r-3}-o(1)\big)n$ and $\chi(G)=\omega(1)$. Then,
\begin{equation}\label{eq:macro}
 \Big|E(G)\triangle E\Big(K_{r-1}\Big(\frac{2n}{2r-3},\ldots,\frac{2n}{2r-3},\frac{n}{2r-3}\Big)\Big)\Big|=o(n^2).    
\end{equation}
Furthermore, $G$ admits a vertex partition $V(G)=A^*\cup B^*_1\cup\cdots\cup B^*_{r-1}$ with the following properties:
\begin{itemize}
\item [{\rm (i)}] $|A^*|=o(n)$, $|B^*_i|=\big(\frac{2}{2r-3}\pm o(1)\big)n$ for each $i\in[r-2]$, and $|B^*_{r-1}|=\big(\frac{1}{2r-3}\pm o(1)\big)n$;
\item [{\rm (ii)}] for each $i\in[r-1]$, $B^*_i$ is an independent set;
\item [{\rm (iii)}] $\chi(G[A^*])=\omega(1)$ and $\chi_f(G[A^*])\le 2+o(1)$; moreover, $G[A^*]$ admits a homomorphism to $\mathrm{KN}(m,(\frac{1}{2}-o(1))m)$, where $m = O(\log n)$. 
\end{itemize}
\end{theorem}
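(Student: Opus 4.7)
The plan is to first extract a macro $(r-1)$-partite skeleton and then isolate the micro obstruction $A^*$ where the chromatic number lives, finally encoding $A^*$ via a small Kneser graph.

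\medskip

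\emph{Macro structure.} I would apply the stability version of the Goddard--Lyle--Nikiforov theorem $\delta_\chi(K_r) = \frac{2r-5}{2r-3}$ (alternatively, one can induct on $r$ via the link-graph transformation: for each $v \in V(G)$, $G[N(v)]$ is $K_{r-1}$-free with minimum-degree ratio $\frac{2(r-1)-5}{2(r-1)-3}$, precisely the threshold for the next case). This yields an approximate $(r-1)$-partite structure with parts of almost the required sizes; a standard cleaning step moves at most $o(n)$ vertices into $A^*$ so that each $B_i^*$ is genuinely independent, producing (i), (ii), and the edit-distance bound~\eqref{eq:macro}. Since $G - A^* \subseteq K_{r-1}(|B_1^*|, \ldots, |B_{r-1}^*|)$ is $(r-1)$-colorable, $\chi(G[A^*]) \ge \chi(G) - (r-1) = \omega(1)$.

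\medskip

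\emph{Structure on $A^*$.} For each $v \in A^*$ with non-neighborhood $Z_v$, the min-degree hypothesis gives $|Z_v| \le \frac{2n}{2r-3}+o(n)$, and $K_r$-freeness forces $Z_v$ to be a transversal of the almost-complete $K_{r-1}$-hypergraph in the macro. Since the minimum vertex cover of this $(r-1)$-partite hypergraph is a single block, the bound on $|Z_v|$ implies $Z_v$ essentially contains some $B_{j(v)}^*$, defining a \emph{type} $j(v) \in [r-1]$. For $j \in [r-2]$, any two type-$j$ vertices $v, w$ are adjacent to $(1-o(1))|B_k^*|$ vertices in every $B_k^*$ with $k \neq j$, so $N(v) \cap N(w)$ contains a $K_{r-2}$; an edge $vw$ would yield a $K_r$, a contradiction. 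Hence each type-$j$ class ($j \in [r-2]$) is independent, and an iterative re-assignment absorbs them into $B_j^*$ at a cost of $o(n)$ collateral moves. So without loss of generality every $v \in A^*$ has type $r-1$ and in particular $|Z_v \setminus B_{r-1}^*| \le \frac{n}{2r-3}+o(n)$.

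\medskip

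\emph{Kneser homomorphism.} Sample $m = C \log n$ i.i.d.\ random $(r-2)$-tuples $T_1,\ldots,T_m$, with $T_i$ obtained by picking one vertex uniformly from each of $B_1^*,\ldots,B_{r-2}^*$; by macro stability each $T_i$ is a genuine $K_{r-2}$ with high probability. For $v \in A^*$, set $S(v) := \{i : T_i \subseteq N(v)\}$. If $v,w \in A^*$ are adjacent and $i \in S(v) \cap S(w)$, then $\{v,w\} \cup T_i$ is a $K_r$, forcing $S(v) \cap S(w) = \varnothing$. For the size, set $x_k(v) := |Z_v \cap B_k^*|/|B_k^*|$ for $k \in [r-2]$; then $\sum_k x_k(v) \le \tfrac12 + o(1)$ from the bound on $|Z_v \setminus B_{r-1}^*|$, and Weierstrass' inequality gives
\[
\mathbb{E}[|S(v)|]/m \;=\; \prod_{k=1}^{r-2} (1 - x_k(v)) \;\ge\; 1 - \sum_k x_k(v) \;\ge\; \tfrac12 - o(1).
\]
A Chernoff estimate plus union bound over $|A^*|^2 = o(n^2)$ pairs gives $|S(v)| \ge (\tfrac12 - o(1))m$ for every $v \in A^*$ with positive probability. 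Truncating each $S(v)$ to the target size then produces the desired homomorphism $G[A^*] \to \mathrm{KN}(m, (\tfrac12 - o(1))m)$, whence $\chi_f(G[A^*]) \le \chi_f(\mathrm{KN}(m, (\tfrac12 - o(1))m)) = 2 + o(1)$.

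\medskip

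\emph{Main obstacle.} The most delicate step is the iterative absorbing of types $j \in [r-2]$ while preserving $|A^*| = o(n)$: a crude double-count of ``bad'' vertices in $B_j^*$ (those incident to a type-$j$ vertex of $A^*$) yields only $o(n^2)$, and turning this into an $o(n)$ bound requires iterating the macro cleaning and using the flexibility in the definition of ``type'' against the size of $A^*$. A secondary subtlety is the use of Weierstrass' inequality to convert the single linear constraint $\sum_k x_k(v) \le \tfrac12 + o(1)$ into the product bound, which is what gives the correct Kneser parameter $(\tfrac12 - o(1))m$; without this, the argument would only produce a homomorphism into $\mathrm{KN}(m, \Omega(m)/2^{r})$, missing the sharp threshold.
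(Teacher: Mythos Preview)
Your proposal has a real gap in the macro step and a genuinely different (and nice) idea in the Kneser step.

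\textbf{The gap.} There is no off-the-shelf ``stability version of Goddard--Lyle--Nikiforov'' to invoke: the relevant extremal configuration is the \emph{unbalanced} complete $(r-1)$-partite graph with one part half the size of the others, so Erd\H{o}s--Simonovits stability does not apply, and establishing this macro structure is itself a main theorem of the paper. It is proved via regularity: partition $V(G)$ according to which clusters each vertex sees with linear degree, use the hypothesis $\chi(G)=\omega(1)$ to locate an index set $I$ with $|I|\ge(\frac{2r-4}{2r-3}-o(1))k$ on which the reduced graph is $K_{r-1}$-free, and then apply Andr\'asfai--Erd\H{o}s--S\'os to $(r-2)$-colour $R[I]$. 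Your link-graph induction gives structure inside each $N(v)$ but no mechanism to glue these into a single global partition. Relatedly, your ``main obstacle'' is misplaced: once the initial partition $A\cup B_1\cup\cdots\cup B_{r-1}$ with independent $B_i$ is in hand (this is where $\mathcal{M}(K_r)=\{K_2\}$ is used), the absorption of type-$j$ vertices into $B_j^*:=B_j\cup R_j$ is a single step with no iteration---one checks directly that $B_j^*$ remains independent by exhibiting a $K_{r-2}$ in the common neighbourhood of any hypothetical edge.

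\textbf{The different route to the Kneser homomorphism.} The paper does \emph{not} use your random-$K_{r-2}$ sampling plus Weierstrass. Instead it proves a structural claim: within each connected component of $G[A^*]$ there is a \emph{single} index $i\in[r-2]$ with $N(\{x,y\})\cap B_i^*=\varnothing$ for every edge $xy$; mapping $v$ to a $b$-subset of $N(v)\cap B_i^*$ then gives $G[A^*]\to\mathrm{KN}(|B_i^*|,b)$ with ratio at most $2+o(1)$, and a separate random-projection lemma (sample a random $m$-subset of $[|B_i^*|]$ and use hypergeometric concentration) cuts the ground set to size $m=O(\log n)$. Your route bypasses the component analysis entirely---Weierstrass converts the linear bound $\sum_k x_k(v)\le\tfrac12+o(1)$ into the product bound in one line, and the $O(\log n)$ sample does the dimension reduction simultaneously. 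One fix is needed: a uniformly random tuple from $B_1^*\times\cdots\times B_{r-2}^*$ is a clique only with probability $1-O(\sqrt\beta)$, a constant not tending to $1$ with $n$, so you cannot union-bound over $m=C\log n$ samples to make \emph{all} $T_i$ cliques; instead discard the non-clique $T_i$ (at most $O(\sqrt\beta)m$ of them, by Chernoff) and absorb the loss into the $o(1)$.
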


Several remarks are in order. First of all,~\eqref{eq:macro} and parts~(i) and (ii) determine the macro structure of almost extremal graphs similar to other stability results. The interesting part of~\cref{thm:Kr} is that it also captures their micro structure as shown by part~(iii). 
This is the \emph{first} stability result that witnesses such lower scale structure. Moreover, in part~(iii), the bound on the fractional chromatic number on part $A^*$ is best possible. More importantly, the bound on $m$ is optimal. 
Indeed, by taking $m\geq \tfrac{1}{2}\log_2 n$ with $\binom{m}{ (\tfrac{1}{2}-o(1))m } \leq \sqrt{n}$, $G[A^*]$ could be a Kneser graph $\mathrm{KN}(m,(\frac{1}{2}-o(1))m)$. 
To achieve this optimal bound on $m$, we utilize random projections akin to the Johnson--Lindenstrauss lemma~\cite{johnson1984extensions} for low-distortion embedding of high dimensional point sets.

We also establish the stability result for general graphs $H$ with $\chi(H)=r\ge 3$ and $\delta_\chi(H)=\frac{2r-5}{2r-3}$. For a graph $H$ with $\chi(H) = r \geq 3$, the \emph{decomposition family} $\mathcal{M}(H)$ consists of all bipartite graphs that can be obtained from $H$ by deleting $r-2$ color classes in an $r$-coloring of $H$.

\begin{theorem}[Stability for $\delta_\chi(H)=\frac{2r-5}{2r-3}$]\label{thm:lambda-stability} 
Let $H$ be a graph with $\chi(H)=r\ge 3$ and $\delta_\chi(H)=\frac{2r-5}{2r-3}$.
Every $H$-free graph $G$ on $n$ vertices with $\delta(G)\ge\big(\frac{2r-5}{2r-3}-o(1)\big)n$ and $\chi(G)=\omega(1)$ satisfies
$$\Big|E(G)\triangle E\Big(K_{r-1}\Big(\frac{2n}{2r-3},\ldots,\frac{2n}{2r-3},\frac{n}{2r-3}\Big)\Big)\Big|=o(n^2). $$
Furthermore, $G$ admits a vertex partition $V(G)=A\cup B_1\cup\cdots\cup B_{r-1}$ with the following properties:
\begin{itemize}
\item [{\rm (i)}] $|A|=o(n)$, $|B_i|=\big(\frac{2}{2r-3}\pm o(1)\big)n$ for each $i\in[r-2]$, and $|B_{r-1}|=\big(\frac{1}{2r-3}\pm o(1)\big)n$;
\item [{\rm (ii)}] for every forest $F\in\mathcal{M}(H)$ and $i\in[r-1]$, $G[B_i]$ is $F$-free;
\item [{\rm (iii)}]  $\chi(G[A])=\omega(1)$ but $\chi_f(G[A])= O(1)$.
\end{itemize}
\end{theorem}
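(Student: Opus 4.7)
The overall plan is to adapt the strategy behind \cref{thm:Kr} to the general setting, with the decomposition family $\mathcal{M}(H)$ playing the role that cliques do there. Following the approach of Allen--B\"ottcher--Griffiths--Kohayakawa--Morris \cite{allen2013chromatic}, one first establishes the macroscopic $(r-1)$-partite structure of $G$ via a dense-neighborhood type lemma: since $\delta_\chi(H)=\frac{2r-5}{2r-3}$ is sharp and $\chi(G)=\omega(1)$, the graph $G$ must approximately conform to the Allen et al.\ extremal configuration. This produces a partition $V(G)=A\cup B_1\cup\cdots\cup B_{r-1}$ in which $B_1,\ldots,B_{r-1}$ form an approximate $(r-1)$-partition of most of $G$ while $A$ collects at most $o(n)$ vertices whose neighborhoods do not fit this structure.

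For~(i), a standard minimum-degree averaging argument applies: each $v\in B_i$ places almost all of its $(\frac{2r-5}{2r-3}-o(1))n$ neighbors into $\bigcup_{j\neq i} B_j$, which forces $|B_i|\le(\frac{2}{2r-3}+o(1))n$ for $i\in[r-2]$ and $|B_{r-1}|\le(\frac{1}{2r-3}+o(1))n$; matching lower bounds then come from $\sum_i|B_i|=n-o(n)$. For~(ii), assume some $G[B_i]$ contains a copy of a forest $F\in\mathcal{M}(H)$. The assumption $\delta_\chi(H)=\frac{2r-5}{2r-3}$ forces $\mathcal{M}(H)$ to contain such forests, and since every vertex in $B_i$ retains $\Omega(n)$ neighbors in each $B_j$ with $j\neq i$, one can greedily extend the copy of $F$ to a full copy of $H$ by embedding the remaining $r-2$ color classes of $H$ across the other parts; the forest property of $F$ is crucial, as it allows this tree-like embedding to proceed using only lower bounds on cross-part degrees. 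This contradicts the $H$-freeness of $G$. The symmetric difference bound $|E(G)\triangle E(K_{r-1}(\ldots))|=o(n^2)$ is then an immediate consequence of (i), (ii), and the $(r-1)$-partite approximation.

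For~(iii), the unboundedness $\chi(G[A])=\omega(1)$ follows since each $G[B_i]$ is $F$-free for some forest $F\in\mathcal{M}(H)$, hence has bounded degeneracy and bounded chromatic number by the Erd\H{o}s--Gallai-type bound on forest-free graphs; combined with the $(r-1)$-chromatic structure across parts, $G[B_1\cup\cdots\cup B_{r-1}]$ has bounded chromatic number, so the hypothesis $\chi(G)=\omega(1)$ pushes all the chromatic complexity onto $A$. The fractional bound $\chi_f(G[A])=O(1)$ is the more delicate point. The plan is to classify each $v\in A$ by the asymptotic distribution of $N(v)$ across $B_1,\ldots,B_{r-1}$ and to show that adjacency inside $A$ imposes type-compatibility constraints governed by $\mathcal{M}(H)$; this yields a homomorphism from $G[A]$ to a fixed finite graph of bounded order, and hence of bounded fractional chromatic number.

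The main obstacle is precisely this last step. In contrast to the clique case of \cref{thm:Kr}, where pairwise neighborhood intersections force a rigid Kneser/bipartite structure giving the sharp $\chi_f=2+o(1)$, here the constraints arising from forests $F\in\mathcal{M}(H)$ are considerably weaker and admit a broader range of local configurations on $A$. Isolating the correct bounded-order target graph for the homomorphism on $G[A]$, and verifying that the $H$-freeness together with the minimum-degree condition genuinely enforces this homomorphism, is the technical heart of the proof.
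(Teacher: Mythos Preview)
Your outline for parts (i) and (ii) is broadly in the right spirit, though the paper is more concrete: it applies the regularity lemma, partitions $V(G)$ into sets $X_I$ according to which clusters each vertex sees with linear density, and shows that for the $I$ carrying the large chromatic number, $R[I]$ must be $K_{r-1}$-free and large; Andr\'asfai--Erd\H{o}s--S\'os then gives an $(r-2)$-coloring of $R[I]$, from which the parts $B_1,\ldots,B_{r-2}$ are built. The part $B_{r-1}$ is not obtained symmetrically but is \emph{defined} as the common neighborhood of carefully chosen $|H|$-sets $T_i\subseteq B_i$, so that $G[T_1,\ldots,T_{r-2},B_{r-1}]$ is complete $(r-1)$-partite by construction; this is what makes the argument for (ii) go through without a greedy embedding.

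The real discrepancy is in part (iii). You flag the bound $\chi_f(G[A])=O(1)$ as the technical heart and propose to build a homomorphism from $G[A]$ to a fixed finite graph determined by neighborhood types. The paper does not do this at all, and indeed leaves open whether such a structured bound (in particular $\chi_f(G[A])\le 2+o(1)$) holds for general $H$. Instead it invokes the fractional chromatic threshold (\cref{thm:fractional-chromatic-threshold-2}): since $\delta_\chi(H)=\tfrac{2r-5}{2r-3}<\tfrac{r-2}{r-1}$, the decomposition family $\mathcal{M}(H)$ contains a forest, hence $\delta_{\chi_f}(H)=\tfrac{r-3}{r-2}$. Because $\tfrac{2r-5}{2r-3}>\tfrac{r-3}{r-2}$, the whole graph $G$ already has bounded fractional chromatic number, and a fortiori so does $G[A]$. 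Your proposed homomorphism route is the mechanism behind the sharper clique case of \cref{thm:Kr}, but it is not needed (and is not known to work) here; you have identified as the main obstacle a step that is in fact a one-line application of a separately proved threshold result.
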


It would be interesting to see if the fractional chromatic number in part~(iii) can be improved to $2+o(1)$ as in the clique case.

Finally, we present the stability result for the graphs $H$ with $\delta_\chi(H)=\frac{r-3}{r-2}$.

\begin{theorem}[Stability for $\delta_\chi(H)=\frac{r-3}{r-2}$]\label{thm:theta-stability}
Let $H$ be a graph with $\chi(H)=r\ge 4$ and $\delta_\chi(H)=\frac{r-3}{r-2}$.
Every $H$-free graph $G$ on $n$ vertices with $\delta(G)\ge\big(\frac{r-3}{r-2}-o(1)\big)n$ and $\chi(G)=\omega(1)$ contains a vertex subset $S\subseteq V(G)$ of size $\frac{n}{r-2}$ such that the following holds. Let $\hat{G}$ be the spanning subgraph of $G$ obtained by deleting all edges induced by $S$, then
\begin{align*}
|E(\hat{G})\triangle E(T_{n,r-2})|= o(n^2).
\end{align*}
\end{theorem}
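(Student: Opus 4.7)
The plan is to apply Szemer\'edi's regularity lemma to $G$, show that the resulting reduced graph $R$ is essentially $K_{r-1}$-free using the assumption $\delta_\chi(H)=\tfrac{r-3}{r-2}$, invoke the Erd\H{o}s--Simonovits stability theorem for $K_{r-1}$ on $R$, and finally lift the resulting $(r-2)$-partition back to $V(G)$ while localising the ``chromatic complexity'' into a single part $S$.

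\textbf{Regularity and structure of the reduced graph.} I first apply the regularity lemma with small parameters $\varepsilon,d\to 0$, yielding a cluster partition and a reduced graph $R$ on $k$ clusters with $\delta(R)\ge(\tfrac{r-3}{r-2}-o(1))k$ via the standard minimum-degree transfer. The crucial claim is that, after removing $o(k^2)$ edges, $R$ contains no copy of $K_{r-1}$. Suppose otherwise that $R$ has a $K_{r-1}$ on clusters $W_1,\dots,W_{r-1}$. Because $\chi(G)=\omega(1)$, a linear-sized portion of $G$ supports a subgraph $J$ of unbounded chromatic number. The characterisation from~\cite{allen2013chromatic} of graphs with $\delta_\chi(H)=\tfrac{r-3}{r-2}$ in terms of the decomposition family $\mathcal M(H)$ lets me find inside $J$ a copy of some bipartite graph $F\in\mathcal M(H)$; embedding $F$ between $W_{r-1}$ and a cluster intersecting $J$, and completing it through the skeleton $W_1,\dots,W_{r-2}$ by the regularity blow-up lemma, plants a copy of $H$ in $G$---contradicting $H$-freeness. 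Hence $R$ is essentially $K_{r-1}$-free, and Erd\H{o}s--Simonovits stability for $K_{r-1}$ yields a partition $V(R)=U_1\cup\cdots\cup U_{r-2}$ with $|U_j|=(\tfrac{1}{r-2}\pm o(1))k$ and only $o(k^2)$ edges inside parts or missing between parts.

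\textbf{Lifting the partition and choosing $S$.} Lifting the $U_j$'s back produces parts $V_1,\dots,V_{r-2}$ of $V(G)$ with $|V_i|=(\tfrac{1}{r-2}\pm o(1))n$, between which almost all bipartite pairs are nearly complete and inside which (apart from one distinguished part) the total number of edges is $o(n^2)$. If two distinct parts carried subgraphs of unbounded chromatic number, then the same embedding argument---plant $F\in\mathcal M(H)$ across the two parts and complete it through the remaining $r-3$ parts as an $(r-2)$-clique skeleton---would again force an $H$ in $G$. So all $\omega(1)$-chromatic behaviour is confined to a single part, say $V_{r-2}$; absorbing the at most $o(n)$ exceptional vertices into it, I set $S:=V_{r-2}$, giving $|S|=(\tfrac{1}{r-2}+o(1))n$. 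Deleting $E(G[S])$ from $G$ leaves a spanning subgraph whose edges all run between $V_1,\dots,V_{r-2}$ and which agrees with the complete $(r-2)$-partite graph on these parts up to $o(n^2)$ edges; by the near-balancedness of the parts this is within $o(n^2)$ edges of $T_{n,r-2}$, as required.

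\textbf{Main obstacle.} The principal difficulty is the $K_{r-1}$-freeness step for $R$: one must convert $\chi(G)=\omega(1)$ into an actual embedding of a decomposition-family graph $F\in\mathcal M(H)$ inside a high-chromatic witness, and then fuse this embedding with the regularity blow-up across a $K_{r-1}$ in $R$ without spending any extra minimum-degree slack. This is precisely where the distinction between the three possible values of $\delta_\chi(H)$ enters the proof, and it will require a careful invocation of the decomposition-family machinery of~\cite{allen2013chromatic} together with a Kneser- or Borsuk-style witness that makes the unbounded chromatic number of $G$ usable for building a bipartite subgraph from $\mathcal M(H)$ with the appropriate anchor vertices.
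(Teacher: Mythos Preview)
Your central claim—that after removing $o(k^2)$ edges the reduced graph $R$ is $K_{r-1}$-free—is false, and this breaks the whole argument. Look at the construction in Remark~\ref{remark}: $G^*=(G'\cup K_{m,m})\vee K_{r-3}[3m]$ is $H$-free with $\delta(G^*)=\tfrac{r-3}{r-2}|G^*|$ and $\chi(G^*)$ unbounded. In its reduced graph, the $K_{m,m}$ contributes two groups of $\Theta(k)$ clusters with a complete bipartite pattern between them, and each such edge together with one cluster from each of the $r-3$ remaining parts yields a $K_{r-1}$ in $R$. Destroying all of these requires deleting $\Theta(k^2)$ edges, not $o(k^2)$. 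So you cannot invoke Erd\H{o}s--Simonovits stability on $R$, and the plan collapses at the first step. The same example shows your ``two parts with unbounded chromatic number'' step is not the right obstruction either: the edges inside the special part $S$ are genuinely uncontrolled (this is exactly the point of the theorem's conclusion, which only asks for stability \emph{after} deleting $E(G[S])$).

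You are also using the wrong characterisation. Having a forest in $\mathcal{M}(H)$ only separates $\tfrac{r-2}{r-1}$ from the lower two values; the case $\delta_\chi(H)=\tfrac{r-3}{r-2}$ is characterised by $H$ being $r$-near-acyclic, and this finer structure is what the proof actually exploits. The paper does not try to make $R$ itself $K_{r-1}$-free. Instead it records, for each vertex $x$, \emph{two} index sets: $I_1=\{i:|N(x)\cap V_i|\ge\beta|V_i|\}$ and $I_2=\{i:|N(x)\cap V_i|\ge(1/2+\beta)|V_i|\}$, and finds a pair $(I_1,I_2)$ whose cell $X(I_1,I_2)$ has high chromatic number. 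The $(1/2+\beta)$ threshold is the crucial new ingredient: combined with an embedding lemma for $r$-near-acyclic graphs (Lemma~\ref{lmm:lmm5.3}), it forces $R[I_2]$ to be $K_{r-2}$-free and, in the delicate subcases, pins down the location of the uncontrolled part $S$. Any correct proof must bring in something equivalent to this half-degree condition; without it there is no leverage to distinguish the special part from the others. Your ``main obstacle'' paragraph gestures at Kneser/Borsuk witnesses, but those belong to the $\tfrac{2r-5}{2r-3}$ case and play no role here.
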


In comparison with \cref{thm:Kr} and \cref{thm:lambda-stability} where we nail down the location of $(1-o(1))$-fraction of edges, here in \cref{thm:theta-stability} we have no control on the edges induced by the special part $S$. Indeed, in the extremal graphs, the induced subgraph on part $S$ could be any `locally bipartite' graph with edge density between 0 and 1/2; we refer the readers to  \cref{remark} for more details.

\subsection{Other extensions of chromatic thresholds}
The chromatic threshold problem has been extended to what is now known as the \textit{homomorphism threshold}, as initially discussed in \cite{thomassen2007chromatic}. Further extensions have been discussed in \cite{ebsen2020homomorphism,goddard2011dense,huang2025interpolating}.
This section delves into two additional extensions of chromatic thresholds.

In Theorems \ref{thm:Kr} and \ref{thm:lambda-stability}, while the chromatic number of graph $G$ can be arbitrarily large, the fractional chromatic number remains bounded. This leads to the following natural question: can we reduce the minimum degree requirement of an $H$-free graph $G$ if only asking for bounded fractional chromatic number? More precisely, we define the \emph{fractional chromatic threshold} for a graph $H$ as follows:
\begin{align*}
\delta_{\chi_{f}}(H):=& \inf \{d: \exists~C=C(H, d)\text{ such that if $G$ is a graph on $n$ vertices}, \\
& \text{with $\delta(G) \geq d n$ and $H \not \subseteq G$, then $\chi_{f}(G) \leq C$}\}.
\end{align*}

In contrast to the usual chromatic thresholds (cf.~\cref{thm:morris}), Kneser graph is not an obstruction any more for cliques in the fractional counterpart. One shall expect a lower threshold in the fractional setting. Our next result confirms this intuition, determining the fractional chromatic threshold for all graphs.

\begin{theorem}\label{thm:fractional-chromatic-threshold-2}
Let $H$ be a graph with $\chi(H)=r\ge 3$. Then
\begin{align*}
\delta_{\chi_{f}}(H)=
  \begin{cases} 
  \frac{r-3}{r-2} & \text{if $\mathcal{M}(H)$ contains a forest},
  \\   \frac{r-2}{r-1} & \text{otherwise}.
\end{cases}
\end{align*}
\end{theorem}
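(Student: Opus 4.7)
The theorem splits into two cases depending on whether $\mathcal{M}(H)$ contains a forest. In each case I establish matching upper and lower bounds. The upper bound in the no-forest case is immediate from $\chi_f(G)\le\chi(G)$ combined with the classical bound $\delta_\chi(H)\le\frac{r-2}{r-1}$ (see~\cref{thm:morris}). The lower bounds rely on standard probabilistic constructions, while the upper bound in the forest case is the main technical content.

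For the lower bounds, I use Erd\H{o}s-type random constructions producing graphs $J$ with arbitrarily large girth and $\chi_f(J)=\omega(1)$. When $\mathcal{M}(H)$ has no forest, let $G$ be the complete $(r-1)$-partite join of $r-1$ disjoint copies of $J$: then $\delta(G)\ge\frac{r-2}{r-1}n$ and $\chi_f(G)=(r-1)\chi_f(J)=\omega(1)$, while any embedding of $H$ would force some bipartite graph from $\mathcal{M}(H)$ into a single copy of $J$, contradicting the girth since no member of $\mathcal{M}(H)$ is a forest. When $\mathcal{M}(H)$ has a forest, additionally take $J$ triangle-free, and let $G$ consist of one copy of $J$ completely joined to $r-3$ independent sets of size $n/(r-2)$ each: then $\delta(G)\ge\frac{r-3}{r-2}n$ and $\chi_f(G)=\chi_f(J)+(r-3)=\omega(1)$, while any $H$-embedding would require a subset $U\subseteq V(J)$ with $\chi(H[U])\ge 3$, hence an odd cycle of length at most $|V(H)|$ inside $J$---again impossible.

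For the forest case upper bound, I proceed by induction on $r$. The base case $r=3$ is the classical fractional-chromatic analogue, due to Brandt and Thomass\'{e}, of Thomassen's theorem: every triangle-free graph with $\delta(G)\ge \varepsilon n$ satisfies $\chi_f(G)\le f(\varepsilon)$. For $r\ge 4$, assume $G$ is $H$-free with $\delta(G)\ge(\frac{r-3}{r-2}+\varepsilon)n$ and, without loss of generality, $\chi(G)=\omega(1)$. When $\delta_\chi(H)=\frac{r-3}{r-2}$, invoke~\cref{thm:theta-stability} to extract $S\subseteq V(G)$ with $|S|=(1+o(1))n/(r-2)$ such that $G\setminus E(G[S])$ is $o(n^2)$-close to $T_{n,r-2}$; a routine cleaning step produces a near-balanced partition $V(G)\setminus S=V_2\cup\cdots\cup V_{r-2}$ into near-independent classes joined almost completely across parts and to $S$. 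The key structural claim is that $G[S]$ is $K_{r-1}$-free: an $(r-1)$-clique $\{v_1,\ldots,v_{r-1}\}\subseteq S$ together with one common neighbor chosen from each $V_j$ (existing thanks to the dense cross-adjacencies) produces a complete $(r-2)$-partite blow-up whose edges inside $S$ realize the forest $F\in\mathcal{M}(H)$, giving a copy of $H$---contradiction. The inductive hypothesis applied to $K_{r-1}$ (whose decomposition family $\{K_2\}$ is a forest) yields $\chi_f(G[S])=O(1)$, and the subadditive estimate $\chi_f(G)\le\chi_f(G[S])+\chi_f(G[V\setminus S])\le O(1)+(r-3)$ concludes. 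When $\delta_\chi(H)=\frac{2r-5}{2r-3}$ but $\mathcal{M}(H)$ still contains a forest,~\cref{thm:lambda-stability} plays the analogous role once $\delta\ge\frac{2r-5}{2r-3}-o(1)$, while the intermediate regime $\frac{r-3}{r-2}+\varepsilon\le\delta<\frac{2r-5}{2r-3}$ is handled by the same $(r-2)$-partite reduction applied with the forest $F$ in place of an edge.

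The principal obstacle is that for very small $\varepsilon$ the inherited density $(r-2)\varepsilon$ inside $G[S]$ may fall short of the threshold $\frac{r-4}{r-3}$ required to invoke the inductive hypothesis on $K_{r-1}$. I circumvent this by iterating the peeling: reapply~\cref{thm:theta-stability} inside $G[S]$ to extract a further structured subset $S'\subseteq S$, and descend through a chain $S\supseteq S'\supseteq\cdots$ of structured parts, terminating after at most $r-3$ rounds in a triangle-free residue where the Brandt--Thomass\'{e} base case applies.
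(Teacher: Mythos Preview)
Your lower bounds and the no-forest upper bound are fine and match the paper. The forest-case upper bound, however, has genuine gaps.

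\textbf{Circularity.} In this paper, \cref{thm:lambda-stability}(iii) is proved precisely by invoking \cref{thm:fractional-chromatic-threshold-2} (see the last paragraph of the proof of \cref{thm:lambda-stability-precise-version}). So using \cref{thm:lambda-stability} to establish \cref{thm:fractional-chromatic-threshold-2} is circular unless you carefully avoid part~(iii); but part~(iii) is exactly the statement that the residual piece $A$ has bounded fractional chromatic number, which is what you need.

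\textbf{The stability theorems do not cover the relevant regime.} The heart of the matter is the case $\delta_\chi(H)=\frac{2r-5}{2r-3}$ (e.g.\ $H=K_r$). Here you must handle any $H$-free $G$ with $\delta(G)\ge(\frac{r-3}{r-2}+\varepsilon)n$ for arbitrarily small $\varepsilon>0$; such $G$ can have $\chi(G)=\omega(1)$ while $\delta(G)$ sits well below $\frac{2r-5}{2r-3}n$, so \cref{thm:lambda-stability} does not apply. And \cref{thm:theta-stability} applies only to $H$ with $\delta_\chi(H)=\frac{r-3}{r-2}$, for which the upper bound is already trivial via $\chi_f\le\chi$. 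Your sentence about the ``intermediate regime'' being handled by ``the same $(r-2)$-partite reduction'' is not a proof. The peeling fix in the final paragraph also fails: you propose to reapply \cref{thm:theta-stability} inside the $K_{r-1}$-free piece, but $\delta_\chi(K_{r-1})=\frac{2r-7}{2r-5}\neq\frac{r-4}{r-3}$, so \cref{thm:theta-stability} is not available there.

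\textbf{What the paper does instead.} The paper gives a short direct argument avoiding stability entirely. If $H\subseteq F\vee K_{r-2}[t]$ with $F$ a forest, then for every vertex $v$ the minimum-degree condition forces (via Erd\H{o}s--Simonovits supersaturation) at least $\alpha n^{(r-2)t}$ copies of $K_{r-2}[t]$ inside $G[N(v)]$. For each such copy $T$, the common neighborhood $N(T)$ is $F$-free and hence $|F|$-colorable; assigning to $v$ the color $(T,i)$ whenever $v$ lies in the $i$-th class of $N(T)$ yields an $a{:}b$-coloring with $a/b\le |H|/\alpha$, giving $\chi_f(G)=O_{H,\varepsilon}(1)$ in one stroke.
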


While being interesting on its own,~\cref{thm:fractional-chromatic-threshold-2} is also a key ingredient in proving~\cref{thm:lambda-stability}(iii) when we need to bound the fractional chromatic number of the high chromatic part $A$.

Another extension is to combine with the notion of~\emph{VC-dimension} (see \cref{sec:bounded-VC} for definition).  The study of the use of VC-dimension in chromatic thresholds began in 2010, when {\L}uczak and Thomass{\'e} \cite{luczak2010coloring} provided an intriguing new proof of $\delta_\chi(K_3)=1/3$ using the theory of VC-dimension. Moreover, they showed that for graphs with bounded VC-dimension, the minimum degree condition can be relaxed to $\delta(G)\ge cn$ for any constant $c>0$. Recently, Liu, Shangguan, Skokan, and Xu \cite{liu2024beyond} gave a new short proof of {\L}uczak and Thomass{\'e}'s result and introduced the concept of \textit{bounded-VC chromatic thresholds}, as described below:
\begin{align*}
\delta_\chi^{\text{VC}}(H): = 
& \inf \{c\ge 0: \forall d\in \mathbb{N},~\exists~C=C(c,d,H) \text{ such that if $G$ is a graph on $n$ vertices},\\ 
& \text{with $\mathrm{VC}(G)\le d$, $\delta(G) \geq c n$ and $H \not \subseteq G$, then $\chi(G) \leq C$}. \}
\end{align*}

In this formulation, {\L}uczak and Thomass{\'e}'s result implies $\delta_\chi^{\text{VC}}(K_3)=0$. We extend their result by determining the bounded-VC chromatic thresholds for all complete graphs. 
\begin{theorem}\label{thm:bounded-VC}
For every $r\ge 3$,  $\delta_\chi^{\mathrm{VC}}(K_r)=\frac{r-3}{r-2}.$
\end{theorem}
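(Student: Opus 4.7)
The theorem splits into a lower bound $\delta_\chi^{\mathrm{VC}}(K_r) \geq \frac{r-3}{r-2}$ (nontrivial only for $r \geq 4$) and a matching upper bound, which I plan to handle separately.

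For the lower bound, the idea is a Hajnal-style construction with the micro part replaced by a triangle-free graph of bounded VC-dimension. Partition $n$ vertices into $r-3$ independent sets $V_1, \dots, V_{r-3}$ and a special set $S$, each of size $\frac{n}{r-2}$, make all bipartite graphs between distinct parts complete, and place on $S$ a triangle-free graph $H$ with $\chi(H) \to \infty$ and $\mathrm{VC}(H) = O(1)$. The shift graph on $\binom{[N]}{2}$, with edges $\{\{a, b\}, \{b, c\}\}$ for $a < b < c$, works: it is triangle-free with $\chi = \lceil \log_2 N\rceil$, and a shattered set of size $t$ in it yields at most $(t+1)^2$ realisable patterns (parameterised by the two endpoints of the shattering vertex), forcing $2^t \leq (t+1)^2$ and hence bounded VC-dimension. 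The resulting graph $G$ is $K_r$-free (a maximum clique uses one vertex from each $V_i$ plus an edge in $S$, totalling $r-1$ vertices), has $\delta(G) \geq \frac{r-3}{r-2} n$, and $\chi(G) = \chi(H) + (r-3) \to \infty$. A short case analysis of shattering in $G$ shows that a shattered set which meets two distinct $V_i, V_j$ fails to realise the empty pattern $N(v)\cap T = \varnothing$; hence shattered sets lie in $S$ or are singletons inside some $V_i$, so $\mathrm{VC}(G) \leq \max(\mathrm{VC}(H), 1)$ is bounded.

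For the upper bound, fix $c > \frac{r-3}{r-2}$ and let $G$ be $K_r$-free with $\delta(G) \geq cn$ and $\mathrm{VC}(G) \leq d$. The plan is an iterative $\varepsilon$-net argument over a recursion tree of depth $r - 2$. A node at level $k$ is indexed by a sequence $\vec{x} = (x_1, \dots, x_k)$ that spans a $K_k$ in $G$, and carries the common neighbourhood $S_{\vec{x}} = \bigcap_{i=1}^k N(x_i)$, which is $K_{r-k}$-free. A union bound yields $|S_{\vec{x}}| \geq (1 - k(1-c))n$ and $|N(v) \cap S_{\vec{x}}| \geq \gamma_k n$ for every $v \in S_{\vec{x}}$, where $\gamma_k := (k+1)c - k$; the hypothesis $c > \frac{r-3}{r-2}$ is exactly what ensures $\gamma_k > 0$ for every $k \leq r - 3$. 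Applying the $\varepsilon$-net theorem with $\varepsilon = \gamma_k$ to the neighbourhood hypergraph of $G[S_{\vec{x}}]$ (which has VC-dimension at most $d$) yields a set $X_{\vec{x}} \subseteq S_{\vec{x}}$ of size $O(\gamma_k^{-1} d \log \gamma_k^{-1})$ such that every $v \in S_{\vec{x}}$ has a neighbour in $X_{\vec{x}}$. Since each $x' \in X_{\vec{x}}$ is adjacent to every $x_i$, the tuple $(\vec{x}, x')$ spans a $K_{k+1}$ and becomes a child.

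After $r - 2$ iterations, every vertex of $G$ has been assigned either to a boundary set $X_{\vec{x}}$ at some level $\leq r - 3$ or to a leaf cell $S_{(x_1, \dots, x_{r-2})}$, which is independent because $G[S_{(x_1, \dots, x_{r-2})}]$ is $K_2$-free. The tree has constant branching and constant depth, so the total number of boundary vertices and the total number of leaves are bounded by a constant $F(c, d, r)$; assigning each boundary vertex its own colour and each leaf cell its own colour yields a proper $O(F(c, d, r))$-colouring. The main obstacle the plan navigates is that a direct induction on $r$, passing from $G$ to $G[N(v)]$, loses the density from $c$ to $2c - 1$ in a single step and therefore drops below the threshold $\frac{r-4}{r-3}$ required by the inductive hypothesis once $r \geq 5$; descending instead into common neighbourhoods of growing cliques spreads the density loss out over $r - 2$ steps, and the arithmetic of $\gamma_k$ makes $\frac{r-3}{r-2}$ emerge precisely as the value at which $\gamma_{r-3}$ vanishes.
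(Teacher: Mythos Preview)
Your proof is correct and follows essentially the same route as the paper: shift graphs joined to a blow-up of $K_{r-3}$ for the lower bound, and iterated $\varepsilon$-nets for the upper bound. The paper packages the upper bound as a direct induction on $r$, bounding $\chi(G)\le\sum_{v\in T}\chi(G[N(v)])$ for a small transversal $T$ and applying the $K_{r-1}$ case to each $G[N(v)]$; your recursion tree over growing cliques is exactly this induction unrolled.

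One correction to your final remark: the claim that direct induction ``loses the density from $c$ to $2c-1$ and therefore drops below $\tfrac{r-4}{r-3}$'' is based on a miscalculation. The number $2c-1$ is a lower bound for $\delta(G[N(v)])/n$, but the inductive hypothesis needs $\delta(G[N(v)])/|N(v)|$. Since $|N(v)|\ge cn$, one gets
\[
\frac{\delta(G[N(v)])}{|N(v)|}\ \ge\ 1-\frac{(1-c)n}{|N(v)|}\ \ge\ 2-\frac{1}{c},
\]
and $2-\tfrac{1}{c}>\tfrac{r-4}{r-3}$ is equivalent to $c>\tfrac{r-3}{r-2}$. So the induction goes through as-is (indeed with the surplus $c'$ above threshold \emph{increasing}), and the paper does precisely this; your common-neighbourhood bookkeeping is an equivalent reformulation, not a workaround for a real obstacle.
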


We note that~\cref{thm:bounded-VC} has already found applications; it is used in very recent work of Huang, Liu, Rong and Xu~\cite{huang2025interpolating} where they introduced an asymmetric version of homomorphism thresholds that interpolates the bounded-VC chromatic and homomorphism threshold problems.


\paragraph{Structure of this paper.} In \cref{sec:preliminarlies}, we present some preliminaries including the construction of the extremal graphs for chromatic thresholds. We first give the proof of~\cref{thm:lambda-stability} in~\cref{sec:lambda-stability} and then build on it to prove~\cref{thm:Kr} in~\cref{sec:Kr-stability}. The proof of~\cref{thm:theta-stability} is given in~\cref{sesc:theta-stability}. Finally, we will determine the fractional chromatic thresholds in \cref{fractional} and the bounded-VC chromatic thresholds of cliques in \cref{sec:bounded-VC}.

\section{Preliminaries}\label{sec:preliminarlies}
\paragraph{Notation.}\label{subsec:defs}
For positive integers $k\le n$, let $[n]=\{1,\ldots,n\}$, and let $\binom{[n]}{k}$ denote the family of all $k$-element subsets of $[n]$. Given two finite sets $X,Y$, their \emph{symmetric difference} is denoted as $X\triangle Y=(X\setminus Y)\cup (Y\setminus X)$. For reals $a,b,c$, we write $a=(b\pm c)$ if $a\in[b-c,b+c]$. For the sake of presentation, we will omit floors and ceilings whenever they are not important.

Let $G = (V, E)$ be a graph, where $V$ denotes the set of vertices and $E$ denotes the set of edges. The number of vertices of $G$ is given by $|G|:=|V(G)|$, and the number of edges by $e(G) := |E(G)|$. 
For a vertex $v \in V(G)$, let $N(v)$ be the set of neighbors of $v$, and let $d(v)=|N(v)|$ be the \emph{degree} of $v$. 
The \emph{minimum degree} of $G$ is $\delta(G) = \min_{v \in V(G)} d(v)$. 
Given a subset $U \subseteq V(G)$, let $N(U)=\bigcap_{u\in U}N(u)$ denote the \emph{common neighborhood} of $U$.
The \emph{induced subgraph} $G[U]$ consists of the vertices in $U$ and all edges in $G$ that have both endpoints in $U$. For disjoint vertex sets $U_1,\ldots, U_s$, let $G[U_1,\ldots, U_s]$ denote the subgraph induced by  the edges with endpoints in distinct sets $U_i$ and $U_j$ for $1 \leq i < j \leq s$.

For two graphs $G$ and $H$, the \emph{join} $G \vee H$ is formed by taking disjoint copies of $G$ and $H$, and adding all edges between $V(G)$ and $V(H)$.  The $s$-blowup of a graph $G$, denoted by $G[s]$, is obtained by replacing each vertex $v \in V(G)$ with an independent set $I_v$ of size $s$, and connecting two sets $I_u$ and $I_v$ with a complete bipartite graph whenever $uv \in E(G)$.

Given two graphs $G$ and $F$, we say that $G$ is \emph{homomorphic} to $F$, denoted by $G\xrightarrow{\textup{hom}} F$, if there exists a homomorphism $\phi:V(G)\rightarrow V(F)$ that preserves adjacencies, that is, for every $uv\in E(G)$, we have $\phi(u)\phi(v)\in E(F)$.

For positive integer $a,b$, a \emph{$b$-fold coloring} of a graph $G$ is an assignment of sets of size $b$ to its vertices such that adjacent vertices receive disjoint sets; an \emph{$a:b$-coloring} is a $b$-fold coloring where each assigned set is chosen from $\binom{[a]}{b}$. Note that $G$ has an $a:b$-coloring if and only if $G\xrightarrow{\textup{hom}} \mathrm{KN}(a,b)$. The \emph{$b$-fold chromatic number} $\chi_b(G)$ of $G$ is the smallest $a$ such that there exists an $a:b$-coloring of $G$. The \emph{fractional chromatic number} $\chi_f(G)$ of $G$ is defined to be $ \chi_f(G)=\lim\limits_{b\rightarrow\infty}\frac{\chi_b(G)}{b}=\inf\limits_{b}\frac{\chi_b(G)}{b}.$  It is clear that we always have $\chi_f(G)\le \chi(G)$, and if $G\xrightarrow{\textup{hom}} \mathrm{KN}(a,b)$, then $\chi_f(G)\le a/b$.

\subsection{Extremal graphs for chromatic thresholds}\label{subsection2.1}

In this subsection, we review the three families of graphs constructed in \cite{allen2013chromatic}, which establish the lower bounds for chromatic thresholds.
Readers familiar with these constructions can skip this subsection.

In \cite{luczak2010coloring}, {\L}uczak and Thomass{\'e} defined a graph $H$ to be \emph{near-acyclic} if $\chi(H)=3$ and $H$ admits a partition into a forest $F$ and an independent set $S$ such that every odd cycle of $H$ intersects $S$ in at least two vertices. Equivalently, for each tree $T$ in $F$ with color classes $V_1(T)$ and $V_2(T)$, no vertex in $S$ has neighbors in both $V_1(T)$ and $V_2(T)$\footnote{For example, $C_5$ is near-acyclic.}. 
 We say that $H$ is \emph{$r$-near-acyclic} if $\chi(H) = r \geq 3$ and there exist $r-3$ independent sets in $H$ whose removal results in a near-acyclic graph.

\begin{theorem}[\cite{allen2013chromatic}]\label{thm:morris}
  Let $H$ be a graph with $\chi(H)=r \geq 3$. Then
\begin{align*}
\delta_\chi(H) \in\left\{\frac{r-3}{r-2},~\frac{2 r-5}{2 r-3},~\frac{r-2}{r-1}\right\}.
\end{align*}
 Moreover, $\delta_\chi(H) \neq \frac{r-2}{r-1}$ if and only if $H$ has a forest in its decomposition family, and $\delta_\chi(H)=$ $\frac{r-3}{r-2}$ if and only if $H$ is $r$-near-acyclic.
\end{theorem}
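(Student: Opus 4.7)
The plan is to prove Theorem \ref{thm:morris} along two fronts: construct explicit $H$-free graphs realizing each of the three candidate values (lower bounds), and use induction on $r = \chi(H)$ together with neighborhood analysis to prove the matching upper bounds. The base case $r=3$ packages classical results: Thomassen's theorem gives $\delta_\chi(K_3) = 1/3$, and the {\L}uczak--Thomass\'e characterization handles near-acyclic graphs with $\delta_\chi = 0$. The harder induction step must track which of the three values applies to $H$, which is controlled by the two combinatorial properties in the ``Moreover'' clause: whether $\mathcal{M}(H)$ contains a forest, and whether $H$ is $r$-near-acyclic.

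For the lower bound constructions, I would give three families indexed by the three values. When $\mathcal{M}(H)$ contains no forest, every element of $\mathcal{M}(H)$ has a cycle, so I take a complete $(r-2)$-partite graph with equal parts joined with a bipartite graph of girth larger than any cycle in $\mathcal{M}(H)$ and of arbitrarily large chromatic number (Erd\H{o}s's probabilistic construction), giving the value $\tfrac{r-2}{r-1}$. When $\mathcal{M}(H)$ contains a forest but $H$ is not $r$-near-acyclic, I use Hajnal's construction: a complete $(r-1)$-partite skeleton with part sizes $(\tfrac{2n}{2r-3},\ldots,\tfrac{2n}{2r-3},\tfrac{n}{2r-3})$ in which the small part carries a Kneser graph $\mathrm{KN}(m,(\tfrac{1}{2}-o(1))m)$ (blown up to fill), giving the value $\tfrac{2r-5}{2r-3}$; here $H$-freeness exploits the $(\tfrac{1}{2}-o(1))m$ independence number. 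When $H$ is $r$-near-acyclic, take $r-3$ full independent sets joined with a near-acyclic template (e.g.\ a blow-up of $C_5$ for $r=4$); the defining property that every odd cycle meets the distinguished independent set in at least two vertices ensures $H$-freeness.

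For the upper bounds, the core tool is a common-neighborhood induction: for a vertex $v$, the induced subgraph $G[N(v)]$ is $H'$-free for each $H' = H\setminus S$ where $S$ is a color class of $H$, and has minimum degree at least $2\delta(G) - n$. Choosing the values of $d$ to match the recursion, $d \geq (\tfrac{r-2}{r-1}+\varepsilon)n$ forces the neighborhood to satisfy the $(r-1)$-level threshold $\tfrac{r-3}{r-2}$, etc. Iterating this neighborhood-descent produces a partition of $G$ into parts that are either almost-independent or have bounded chromatic number by induction, yielding $\chi(G) = O_{H,\varepsilon}(1)$. The two ``if and only if'' clauses then fall out by matching which recursion succeeds: having a forest in $\mathcal{M}(H)$ is exactly the condition needed to discharge the inductive obligation at the lower threshold, because a forest always embeds in the appropriate bipartite skeleton inside a dense enough neighborhood.

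The main obstacle is the tight upper bound at $d = \tfrac{2r-5}{2r-3}$. The subtle asymmetric ratio $\big(\tfrac{2}{2r-3},\ldots,\tfrac{2}{2r-3},\tfrac{1}{2r-3}\big)$ arises as the unique optimization point where a Kneser-type micro-structure can still inflate the chromatic number without allowing the forest in $\mathcal{M}(H)$ to be embedded. Proving that slightly above this threshold any such Kneser obstruction must collapse requires combining the Szemer\'edi regularity lemma with a stability-style argument: one shows that any $H$-free $G$ near the threshold is structurally close to the Hajnal construction, and then uses the forest in $\mathcal{M}(H)$ together with bipartite embedding estimates to rule out unbounded chromatic number in the small part. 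Running this step rigorously across all $H$ simultaneously is the technical heart of the Allen--B\"ottcher--Griffiths--Kohayakawa--Morris proof.
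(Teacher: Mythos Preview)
This theorem is \emph{not proved} in the paper; it is quoted from \cite{allen2013chromatic} as background. The paper only reviews the lower-bound constructions (Section~\ref{subsection2.1}) and uses the statement as a black box. So there is no ``paper's own proof'' to compare your proposal against, and no proof is expected of you here.

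That said, your sketch of the lower bounds contains real errors that would need to be fixed if you were actually writing the proof. For $\delta_\chi(H)=\tfrac{r-2}{r-1}$ you write ``a bipartite graph of girth larger than any cycle in $\mathcal{M}(H)$ and of arbitrarily large chromatic number'': a bipartite graph has chromatic number~$2$, so this is impossible. The correct object (Construction~\ref{construction:extremal-graph-pi}) is a $(C,|H|+1)$-Erd\H{o}s graph $G'$ --- large girth and large \emph{fractional} chromatic number, certainly not bipartite --- placed in one part of $K_{r-1}[|G'|]$. For $\delta_\chi(H)=\tfrac{r-3}{r-2}$ you propose ``a blow-up of $C_5$'' as the near-acyclic template; but any $C_5$-blowup has chromatic number~$3$, so this cannot produce graphs with $\chi\to\infty$. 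Again the correct construction (Construction~\ref{construction:extremal-graph-theta}) uses a high-girth Erd\H{o}s graph $G'$ joined to $K_{r-3}[|G'|]$; $H$-freeness comes from the fact that every $|H|$-vertex subgraph of $G'$ is a forest, not from any near-acyclic structure of the construction. Your description of the Hajnal construction for $\tfrac{2r-5}{2r-3}$ is also off: the Kneser graph sits on $o(n)$ vertices \emph{outside} the $(r-1)$-partite skeleton, not inside the small part (see Definition~\ref{def:Hajnal-graph} and Construction~\ref{construction:K_r}).

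Your upper-bound outline (neighborhood descent plus regularity) is in the spirit of \cite{allen2013chromatic}, but the hard case $\tfrac{2r-5}{2r-3}$ is not a ``stability-style argument'' as you suggest --- the stability in the present paper goes the other way, using the threshold as input. In \cite{allen2013chromatic} the upper bound at $\tfrac{2r-5}{2r-3}$ is handled via the machinery of $(C,\alpha)$-richness in Zykov graphs (cf.\ Lemma~\ref{lmm:lmm5.3} and the appendix), which is considerably more intricate than the recursion you describe.
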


In \cite{allen2013chromatic}, the authors constructed three families of graphs, each corresponding to one of the values
$\delta_\chi(H)\in\{\frac{r-3}{r-2},\frac{2 r-5}{2 r-3},\frac{r-2}{r-1}\}$,
such that for every $C\in\mathbb{N}$ there exists an infinite sequence of $n$-vertex $H$-free graphs $G_n$ with minimum degree $\delta(G_n)\ge(\delta_\chi(H)-o(1))\cdot n$ and chromatic number $\chi(G_n)\ge C$, where $o(1)\rightarrow 0$ as $n\rightarrow \infty$. We refer to these graphs as \emph{extremal graphs} for chromatic thresholds.

\paragraph{Extremal graphs for $\delta_\chi(H)=\frac{r-2}{r-1}$.} 

The \emph{girth} of a graph $G$ is defined as the length of its shortest cycle. 
The original proof by Erd\H{o}s \cite{erdos1959graph} showed the existence of graphs $G$ in which both the girth and the ratio $|G| / \alpha(G)$ are arbitrarily large.
Consequently, it follows from $\chi(G)\ge \chi_f(G)\ge |G|/\alpha(G)$ \cite{scheinerman2013fractional} that the girth and the fractional chromatic number can also be made arbitrarily large simultaneously.


For each $k, \ell \in \mathbb{N}$, we shall call a graph $G$ a \emph{$(k, \ell)$-Erd\H{o}s graph} if it has fractional chromatic number at least $k$, and girth at least $\ell$.

\begin{construction}[Extremal graphs for $\delta_\chi(H)=\frac{r-2}{r-1}$]\label{construction:extremal-graph-pi}
  For every such $H$ and every $C \in \mathbb{N}$, let $G'$ be a $(C,|H|+1)$-Erd\H{o}s graph. Let $G=G'\vee K_{r-2}[|G'|]$. In other words, $G$ is obtained from placing a copy of $G'$ into one part of a balanced complete $(r-1)$-partite graph.  
\end{construction}

Note that $G$ is $H$-free as there is no forest in the decomposition family of $H$ and the girth of $G'$ is larger than the order of $H$, and $G$ satisfies $\delta(G)=\frac{r-2}{r-1}\cdot |G|$ and $\chi(G) \geq C$, thus confirming the lower bound $\delta_\chi(H)\ge\frac{r-2}{r-1}$.

\paragraph{Extremal graphs for $\delta_\chi(H)=\frac{2r-5}{2r-3}$.} In order to establish a lower bound for the family of graphs $H$ with $\chi(H)=r\ge3$ and $\delta_\chi(H)=\frac{2r-5}{2r-3}$, the authors of \cite{allen2013chromatic} introduced the \emph{$r$-Borsuk-Hajnal graph}, which is an extension of the \emph{$r$-Hajnal graph} (see \cref{construction:K_r}). We  utilize only the $r$-Hajnal graph. We recommend that readers refer to \cite{allen2013chromatic} for more details.

\begin{definition}[Hajnal graphs]\label{def:Hajnal-graph}
Let $k, \ell, m \in \mathbb{N}$ and $2 m+k\mid\ell$. The \emph{Hajnal graph} $H(k, \ell, m)$ is a graph on $\binom{2m+k}{m}+3 \ell$ vertices obtained as follows (see \cref{1.2} below for an illustration).
\begin{itemize}
\item Take vertex disjoint copies of a Kneser graph $\mathrm{KN}(2 m+k, m)$ and a complete bipartite graph $K_{2 \ell, \ell}$ with vertex set $A \cup B$, where $|A|=2 \ell$, and $|B|=\ell$;
\item Partition $A$ into $2 m+k$ pieces $A_1, \ldots, A_{2 m+k}$ of equal size;
\item For $S \in \binom{[2m+k]}{m}=V(\mathrm{KN}(2 m+k, m))$, $j\in [2m+k]$ and $y \in A_j$, add an edge between $S$ and $y$ whenever $j \in S$ (so every $S\in \binom{[2m+k]}{m}$ is connected to precisely $\frac{m|A|}{2m+k}$ vertices of $A$).
\end{itemize}  
\end{definition}

\begin{figure}[!ht]
    \centering
\begin{tikzpicture}[scale=0.8]
  \draw[line width=0.901pt] (0, 3) circle (1cm);
  \node[below] at (0,2) {\small $\mathrm{KN}(2m+k,m)$};
  \draw [red,line width=0.901pt](0.4, 3.3) -- (2.4, 5.2);
  \draw [red,line width=0.901pt](0.4, 3.3) -- (2.3, 4.6);
  \draw [red,line width=0.901pt](0.4, 3.3) -- (2.3, 3.7);
  \draw [blue,line width=0.901pt](0.4, 2.7) -- (2.3, 2.3);
  \draw [blue,line width=0.901pt](0.4, 2.7) -- (2.3, 1.4);
  \draw [blue,line width=0.901pt](0.4, 2.7) -- (2.4, 0.7);
  \filldraw (0.4, 3.3) circle (0.07cm);
  \filldraw (0.4, 2.7) circle (0.07cm);
  \draw[line width=0.901pt] (0.4, 3.3) -- (0.4, 2.7);
  \draw[line width=0.901pt] (3, 3) ellipse (1cm and 3cm);
  \filldraw (3, 3.3) circle (0.03cm);
  \filldraw (3, 3) circle (0.03cm);
  \filldraw (3, 2.7) circle (0.03cm);
  \draw (3.87, 4.5) -- (5.38, 4);
  \draw (3.87, 4.5) -- (5.38, 2);
  \draw (3.87, 1.5) -- (5.38, 4);
  \draw (3.87, 1.5) -- (5.38, 2);
  \draw[line width=0.901pt] (6, 3) ellipse (0.7cm and 2.1cm);
  \filldraw (6, 4) circle (0.07cm);
  \filldraw (6, 3.3) circle (0.03cm);
  \filldraw (6, 3) circle (0.03cm);
  \filldraw (6, 2.7) circle (0.03cm);
  \filldraw (6, 2) circle (0.07cm);
  \draw (2.25, 5)  -- (3.75, 5);
  \draw (2.08, 4.2)  -- (3.92, 4.2);
  \draw (2.00, 3.5)  -- (4.00, 3.5);
  \draw (2.00, 2.5)  -- (4.00, 2.5);
  \draw (2.08, 1.8)  -- (3.92, 1.8);
  \draw (2.25, 1)  -- (3.75, 1);
  \node[below] at (3,5.75) {\footnotesize $A_1$};
  \node[below] at (3,4.85) {\footnotesize $A_2$};
  \node[below] at (3.0,1.65) {\footnotesize $A_{2m+k-1}$};
  \node[below] at (3.0,1) {\footnotesize $A_{2m+k}$};
  \node[below] at (3,0) {\small $A$};
  \node[below] at (6,0.9) {\small $B$};
\end{tikzpicture}
\caption{The Hajnal graph $H(k, \ell, m)$.}\label{1.2}
\end{figure}

Lov\'asz \cite{lovasz1978kneser} famously proved that $\chi(\mathrm{KN}(n, m))=n-2m+2$. 
Clearly, $\chi(H(k, \ell, m))\ge\chi(\mathrm{KN}(2m+k,m))=k+2$ and $H(k, \ell, m)$ is triangle-free provided $m = \omega(k)$. Therefore, taking $\ell=\omega(\binom{2m+k}{m})$ and $k\ge C-2$, $G=H(k, \ell, m)$ is a $(3+o(1))\ell$-vertex triangle-free graph with $\delta(G)\ge\ell\ge(1/3-o(1))\cdot|G|$ and $\chi(G)\ge C$, verifying the lower bound $\delta_\chi(K_3)\ge 1/3$.

\begin{construction}[$r$-Hajnal graphs, extremal graphs for $\delta_{\chi}(K_r)$]\label{construction:K_r} Given $r\ge 3$, let $H(k, \ell, m)$ be the Hajnal graph defined in \cref{def:Hajnal-graph}. 
The $r$-Hajnal graph is defined to be $H_r(k, \ell, m):=H(k, \ell, m)\vee K_{r-3}[2\ell]$. 
\end{construction}

Since $H(k, \ell, m)$ is $K_3$-free, $H_r(k, \ell, m)$ is $K_r$-free. Setting $\ell = \omega(\binom{2m+k}{m})$ and $k\ge C-2$, it is not hard to see that $G=H_r(k, \ell, m)$ is a $(2r-3+o(1))\ell$-vertex $K_r$-free graph with $\delta(G)\ge(2r-5)\ell\ge(\frac{2r-5}{2r-3}-o(1))\cdot|G|$ and $\chi(G)\ge C$.

We remark that the \emph{$r$-Borsuk-Hajnal graph} differs from the $r$-Hajnal graph only at the Kneser graph part and the edges incident to it.

\paragraph{Extremal graphs for $\delta_\chi(H)=\frac{r-3}{r-2}$.} A slight modification of \cref{construction:extremal-graph-pi} shows that every graph $H$ with $\chi(H)=r\ge3$ satisfies $\delta_\chi(H)\ge\frac{r-3}{r-2}$ as follows.

\begin{construction}[Extremal graph for $\delta_\chi(H)=\frac{r-3}{r-2}$]\label{construction:extremal-graph-theta}
For every $H$ with $\chi(H)=r\ge3$ and every $C \in \mathbb{N}$, let $G'$ be a $(C,|H|+1)$-Erd\H{o}s graph. Let $G=G'\vee K_{r-3}[|G'|]$. 
\end{construction}

By construction, every induced subgraph of $G'$ with at most $|H|$ vertices is a forest and hence 2-colorable. Consequently, every induced subgraph of $G$ with at most $|H|$ vertices is $(r-1)$-colorable.  Therefore, $G$ is an $H$-free graph with $\delta(G)=\frac{r-3}{r-2}\cdot |G|$ and $\chi(G) \geq C$, thus proving the lower bound $\delta_\chi(H)\ge\frac{r-3}{r-2}$.   

\begin{remark}\label{remark}
  Note that if we replace the graph $G$ in \cref{construction:extremal-graph-theta} with $G^*:=(G' \cup K_{|G'|,|G'|})\vee K_{r-3}[3|G'|]$, then $G^*$ is also an $H$-free graph with $\delta(G^*)=\frac{r-3}{r-2} \cdot|G^*|$ and $\chi(G^*) \geq C$. 
  Here, $e(G^*)=e(T_{|G^*|, r-2})+ \Omega(|G^*|^2)$. More generally, we can replace the $(G' \cup K_{|G'|,|G'|})$ part with any graph in which any set of $|H|$ vertices induces a bipartite graph; such graph could have any edge density in $[0,1/2]$. Thus, we essentially have no control over the structure of the high chromatic number part of $G^*$.
\end{remark}

\subsection{Auxiliary results}\label{subsection2.2}
In this section, we present several auxiliary results that will be used throughout the paper. 

First we introduce the famous regularity lemma. Let $(A, B)$ be a pair of subsets of vertices of $G$. Let $e(A, B)$ denote the number of edges with one endpoint in $A$ and the other in $B$. Define the \emph{density} of the pair $(A, B)$ as $d(A, B)=\frac{e(A, B)}{|A||B|}$. For any $\varepsilon>0$, the pair $(A, B)$ is said to be \emph{$\varepsilon$-regular} if $|d(A, B)-d(X, Y)|<\varepsilon$ for every $X \subseteq A$ and $Y \subseteq B$ with $|X| \geq \varepsilon|A|$ and $|Y| \geq \varepsilon|B|$. Moreover, given $0 < d < 1$, we say that $(A, B)$ is \emph{$(\varepsilon, d)$-regular} if it is $\varepsilon$-regular and has density at least $d$.

A partition $V_0 \cup V_1 \cup \cdots \cup V_k$ of $V(G)$ is called an \emph{$\varepsilon$-regular partition} if $|V_0| \leq \varepsilon n,|V_1|=\cdots=|V_k|$, and all but at most $\varepsilon k^2$ of the pairs $(V_i, V_j)$, where $1\le i,j\le k,$ are $\varepsilon$-regular. Given an $\varepsilon$-regular partition $V_0 \cup V_1 \cup \cdots \cup V_k$ of $V(G)$ and $0<d<1$, we define a graph $R$, called the \emph{$(\varepsilon, d)$-reduced graph} of $G$, as follows: the vertex set is $V(R)=[k]$ and an edge $ij$ is included in $E(R)$ if and only if $(V_i, V_j)$ is an $(\varepsilon, d)$-regular pair. The partition classes $V_1, \ldots, V_k$ are called the \emph{clusters} of $G$. For brevity, for each $I\subseteq [k]$ we write $V_I=\cup_{i\in I}V_i$.

We will use the following minimum degree form of the regularity lemma.

\begin{theorem}[Theorem~1.10 of \cite{komlos1995szemeredi}]\label{thm:RL}
    Let $0<\varepsilon<d<\delta<1$, and let $k_0 \in \mathbb{N}$. There exists a constant $k_1=k_1(k_0, \varepsilon, \delta, d)$ such that the following holds. Every graph $G$ on $n>k_1$ vertices, with minimum degree $\delta(G) \geq \delta n$, has an $(\varepsilon, d)$-reduced graph $R$ on $k$ vertices, with $k_0 \leq k \leq k_1$ and $\delta(R) \geq(\delta-d-\varepsilon) k$.
\end{theorem}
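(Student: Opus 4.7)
The plan is to deduce this minimum-degree form from the standard Szemer\'edi regularity lemma via a cleanup procedure. First I would apply the usual SRL with parameter $\varepsilon' = \varepsilon^2/100$ and a slightly enlarged lower bound $k_0'$ in place of $k_0$, obtaining a partition $V_0^\circ \cup V_1^\circ \cup \cdots \cup V_{k^\circ}^\circ$ with $|V_0^\circ| \leq \varepsilon' n$, all remaining clusters of common size $m$, and at most $\varepsilon'(k^\circ)^2$ pairs that fail to be $\varepsilon'$-regular.

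Next I would dispose of the problematic clusters. Call $V_i^\circ$ \emph{bad} if it lies in more than $\sqrt{\varepsilon'}\,k^\circ$ irregular pairs; by double counting at most $\sqrt{\varepsilon'}\,k^\circ$ clusters are bad. Absorb all bad clusters into the exceptional set, enlarging it to $V_0$ of size at most $2\sqrt{\varepsilon'}\,n \leq \varepsilon n/5$, and let $V_1,\ldots,V_k$ be the surviving clusters, so $k \geq (1-\sqrt{\varepsilon'})k^\circ \geq k_0$ provided $k_0'$ is chosen large enough in terms of $k_0$ and $\varepsilon$. Define the reduced graph $R$ on $[k]$ by letting $ij \in E(R)$ exactly when $(V_i, V_j)$ is $\varepsilon'$-regular with density at least $d$; since $\varepsilon' \leq \varepsilon$, every such pair is automatically $(\varepsilon,d)$-regular.

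For each surviving cluster $V_i$, I would estimate $d_R(i)$ by double counting the edges from $V_i$ to the other surviving clusters. The minimum degree condition gives at least $m(\delta n - m - |V_0|) \geq (\delta - \varepsilon/2)\,mn$ such edges, provided $k_0'$ is large enough that $m \leq \varepsilon n/10$. On the other hand each such pair $(V_i, V_j)$ is either $\varepsilon'$-irregular (at most $\sqrt{\varepsilon'}\,k^\circ \leq 2\sqrt{\varepsilon'}\,k$ such pairs since $V_i$ is not bad), $\varepsilon'$-regular with density below $d$ (at most $k$ such pairs, each contributing at most $dm^2$ edges), or an edge of $R$ (at most $d_R(i)\cdot m^2$ edges). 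Bounding the irregular-pair contribution by $m^2$ per pair and using $n/m \geq k$, I obtain
\[
(\delta - \varepsilon/2)\,k \;\leq\; 2\sqrt{\varepsilon'}\,k + dk + d_R(i),
\]
which rearranges to $d_R(i) \geq (\delta - d - \varepsilon)\,k$ after absorbing $2\sqrt{\varepsilon'} \leq \varepsilon/5$ into the slack.

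The only delicate point is the parameter balancing: $\varepsilon'$ must be small enough that the enlarged exceptional set, the irregular-pair loss, and the within-cluster loss together consume less than $\varepsilon$ of the available slack, while $k_0'$ must be chosen large enough that enough clusters survive the cleanup and that $m/n$ is negligible. These adjustments are routine, and the resulting $k_1$ depends only on $(k_0, \varepsilon, \delta, d)$, as required.
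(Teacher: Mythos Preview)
The paper does not prove this statement: it is quoted verbatim as Theorem~1.10 of the Koml\'os--Simonovits survey and used as a black box. Your sketch is the standard derivation of the degree form from the ordinary Szemer\'edi regularity lemma (apply SRL with a smaller parameter, absorb into $V_0$ the clusters lying in too many irregular pairs, then double-count edges out of each surviving cluster), and it is essentially correct. The only slips are cosmetic: the averaging bound on bad clusters should read $2\sqrt{\varepsilon'}\,k^\circ$ rather than $\sqrt{\varepsilon'}\,k^\circ$, and you should record explicitly that the surviving partition is still an $\varepsilon$-regular partition in the paper's sense (the remaining irregular pairs number at most $\varepsilon'(k^\circ)^2 \leq \varepsilon k^2$ and $|V_0|\le \varepsilon n$), since the paper defines the reduced graph relative to such a partition. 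With those minor adjustments the parameter balancing goes through exactly as you describe.
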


We will also make use of the counting lemma.

\begin{theorem}[Counting lemma, Theorem~3.1~\cite{komlos1995szemeredi}]\label{thm:counting}
Let $G$ be a graph with $(\varepsilon, d)$-reduced graph $R$ whose clusters contain $m$ vertices each, and suppose that there is a homomorphism $\phi: V(H) \rightarrow V(R)$. Then $G$ contains at least
\begin{align*}
\frac{1}{|\operatorname{Aut}(H)|}(d-\varepsilon|H|)^{e(H)} m^{|H|}
\end{align*}
copies of $H$, each with the property that every vertex $x \in V(H)$ lies in the cluster corresponding to the vertex $\phi(x)$ of $R$.  
\end{theorem}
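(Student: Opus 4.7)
The plan is to prove the counting lemma via a greedy one-vertex-at-a-time embedding, where the regularity of each used pair guarantees many ways to extend a partial embedding. Fix an ordering $v_1,\dots,v_h$ of $V(H)$ with $h=|H|$, let $U_i:=V_{\phi(v_i)}$, and count ordered tuples $(x_1,\dots,x_h)\in U_1\times\cdots\times U_h$ such that $x_ix_j\in E(G)$ whenever $v_iv_j\in E(H)$; each unlabelled copy of $H$ respecting $\phi$ is then counted exactly $|\mathrm{Aut}(H)|$ times. For each step $j$ and index $i\geq j$, define the pool of surviving candidates for $v_i$ after placing $x_1,\dots,x_{j-1}$ as
\[
T_i^{(j)} \;:=\; U_i\cap\bigcap_{\ell<j,\ v_\ell v_i\in E(H)} N_G(x_\ell),
\]
so that necessarily $x_j\in T_j^{(j)}$.

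The central invariant I would maintain is $|T_i^{(j)}|\ge (d-\varepsilon)^{a_i^{(j)}}\,m$, where $a_i^{(j)}$ is the number of already-placed neighbors of $v_i$ in $H$. At step $j$, call a candidate $x_j\in T_j^{(j)}$ \emph{bad for $i$} (for $i>j$ with $v_jv_i\in E(H)$) if $|N_G(x_j)\cap T_i^{(j)}|<(d-\varepsilon)\,|T_i^{(j)}|$. Provided $(d-\varepsilon)^{|H|}\ge \varepsilon$, every pool has size at least $\varepsilon m$, so $(\varepsilon,d)$-regularity of the reduced-graph edge $\phi(v_j)\phi(v_i)$ forces at most $\varepsilon m$ vertices of $U_j$ to be bad for any fixed $i$ (otherwise the bad set paired with $T_i^{(j)}$ would witness a density deviation violating regularity). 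A union bound over the at most $h-1$ relevant indices shows that at most $h\varepsilon m$ vertices of $U_j$ are bad in total, leaving at least $(d-\varepsilon)^{a_j^{(j)}}m-h\varepsilon m$ admissible choices for $x_j$, each of which preserves the invariant at step $j+1$.

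Multiplying these lower bounds over $j=1,\dots,h$ and using $\sum_j a_j^{(j)}=e(H)$, the number of ordered embeddings is at least
\[
\prod_{j=1}^{h}\bigl((d-\varepsilon)^{a_j^{(j)}}-h\varepsilon\bigr)\,m^h,
\]
which is in turn at least $(d-\varepsilon|H|)^{e(H)}m^h$ after a short term-by-term comparison; dividing by $|\mathrm{Aut}(H)|$ then yields the stated bound. The main technical obstacle is the simultaneous bookkeeping of all candidate pools as they shrink with each placed vertex, since one must union-bound across all future indices at every step. This is precisely where the $|H|$-dependent loss $\varepsilon|H|$ in the final bound appears, and it is the reason the statement is meaningful only when $\varepsilon$ is small compared to $1/|H|$.
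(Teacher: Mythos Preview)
The paper does not prove this statement; it is quoted from Koml\'os--Simonovits as a standard tool, so there is no in-paper argument to compare against. Your greedy one-vertex-at-a-time embedding with shrinking candidate pools is the standard proof, and the invariant $|T_i^{(j)}|\ge (d-\varepsilon)^{a_i^{(j)}}m$ together with the regularity-based control of bad vertices is set up correctly.

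The genuine gap is the final ``short term-by-term comparison'': the inequality
\[
\prod_{j=1}^{h}\bigl((d-\varepsilon)^{a_j}-h\varepsilon\bigr)\ \ge\ (d-\varepsilon h)^{e(H)}
\]
is false in general. For $H=K_3$ with $d=1/2$ and $\varepsilon=1/100$ one gets $(0.97)(0.46)(0.49^{2}-0.03)\approx 0.094$ on the left but $(0.47)^{3}\approx 0.104$ on the right. The factor with $a_j=0$ contributes $1-h\varepsilon<1$ with no matching term on the right to absorb the loss, and no pairing of the $h$ left-hand factors with the $e(H)$ right-hand factors fixes this. What the greedy argument genuinely delivers is $(d-\varepsilon)^{e(H)}\prod_{j}\bigl(1-h\varepsilon(d-\varepsilon)^{-a_j}\bigr)m^{h}$, which is positive under a mild extra hypothesis on $\varepsilon$ and suffices for every use in the present paper, since only a bound of the form $\alpha m^{|H|}$ with $\alpha=\alpha(d,\varepsilon,H)>0$ is ever invoked there. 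Two smaller issues: the hypothesis $(d-\varepsilon)^{|H|}\ge\varepsilon$ you insert is not part of the stated theorem and should be flagged as an added standing assumption; and an unlabelled $\phi$-respecting copy is counted \emph{at most} $|\mathrm{Aut}(H)|$ times (indeed exactly once when $\phi$ is injective on $V(H)$), not exactly---though this is the correct direction for a lower bound.
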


We state one more useful fact about subpairs of $(\varepsilon, d)$-regular pairs.

\begin{fact}[Slicing lemma, see e.g. Fact 1.5 in \cite{komlos1995szemeredi}]\label{fact:slicinglemma} 
  Let $\varepsilon<\alpha\le 1/2$.
    Let $(U, W)$ be an $(\varepsilon, d)$-regular pair and suppose that $U' \subseteq U$, $W' \subseteq W$ satisfy $|U'| \geq \alpha|U|$ and $|W'| \geq \alpha|W|$. Then $(U', W')$ is $(\varepsilon / \alpha, d-\varepsilon)$-regular.
\end{fact}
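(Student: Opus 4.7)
The plan is to verify the two defining properties of $(\varepsilon/\alpha, d-\varepsilon)$-regularity separately, namely that $d(U',W')\ge d-\varepsilon$ and that $(U',W')$ is $\varepsilon/\alpha$-regular. Both will follow directly from applying the $\varepsilon$-regularity of $(U,W)$ to appropriately chosen subsets, exploiting that $\alpha>\varepsilon$ guarantees the subsets $U',W'$ themselves are large enough in $U,W$ to serve as witnesses.

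First, I would bound the density. Since $|U'|\ge\alpha|U|>\varepsilon|U|$ and analogously $|W'|>\varepsilon|W|$, the pair $(U',W')$ qualifies as a witness in the definition of $\varepsilon$-regularity of $(U,W)$, so $|d(U',W')-d(U,W)|<\varepsilon$, which yields $d(U',W')> d(U,W)-\varepsilon\ge d-\varepsilon$.

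Next, for the regularity of $(U',W')$, I would take arbitrary $X\subseteq U'$ and $Y\subseteq W'$ with $|X|\ge(\varepsilon/\alpha)|U'|$ and $|Y|\ge(\varepsilon/\alpha)|W'|$. Combined with $|U'|\ge\alpha|U|$ and $|W'|\ge\alpha|W|$, the $\alpha$ cancels and we obtain $|X|\ge\varepsilon|U|$ and $|Y|\ge\varepsilon|W|$. Thus $(X,Y)$ is also a valid witness inside $(U,W)$, so $|d(X,Y)-d(U,W)|<\varepsilon$. Combining this with the density bound from the previous step via the triangle inequality gives
\[
|d(X,Y)-d(U',W')|\le|d(X,Y)-d(U,W)|+|d(U,W)-d(U',W')|<2\varepsilon\le\varepsilon/\alpha,
\]
where the last inequality uses $\alpha\le 1/2$. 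This establishes $\varepsilon/\alpha$-regularity of $(U',W')$ and completes the proof.

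There is no real obstacle here; the argument is a direct two-step application of the definition, and the only minor point to keep track of is that the hypothesis $\alpha\le 1/2$ is exactly what makes the factor-of-two loss from the triangle inequality fit inside the allowed regularity parameter $\varepsilon/\alpha$.
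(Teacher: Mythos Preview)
Your argument is correct and is the standard proof of the slicing lemma. The paper does not give its own proof of this fact; it merely states it with a reference, so there is nothing to compare against.
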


The following two lemmas are basic facts of reduced graphs.

\begin{lemma}\label{prop:16}
  Let $G$ be a graph with an $(\varepsilon, d)$-reduced graph $R$ on $k$ vertices, where $1/k\le 2\varepsilon$. 
  Let $I$ be an independent set of $R$ and let $V_I=\bigcup_{i\in I}V_i$. Then $e(G[V_I])<(2\varepsilon+d/2)n^2.$
\end{lemma}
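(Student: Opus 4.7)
The plan is to bound the edges of $G[V_I]$ by partitioning them into three types and bounding each type separately using the size of the clusters, the definition of $\varepsilon$-regular partition, and the fact that $I$ is independent in $R$.

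Let $m=|V_i|$ for $i\in[k]$, so $km\le n$ and $m\le n/k$. The edges of $G[V_I]$ split into: (a) edges contained in a single cluster $V_i$ with $i\in I$; (b) edges in a pair $(V_i,V_j)$, $i\ne j$ in $I$, that is not $\varepsilon$-regular; and (c) edges in a pair $(V_i,V_j)$, $i\ne j$ in $I$, that is $\varepsilon$-regular. Type (a) contributes at most $|I|\binom{m}{2}<km^2/2\le mn/2$, and using the hypothesis $1/k\le 2\varepsilon$ we get $m/n\le 2\varepsilon$, hence this is at most $\varepsilon n^2$. For type (b), there are at most $\varepsilon k^2$ non-$\varepsilon$-regular pairs in total across the whole partition (by the definition of $\varepsilon$-regular partition), each contributing at most $m^2$ edges, so the total is at most $\varepsilon k^2 m^2\le \varepsilon n^2$. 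For type (c), since $I$ is independent in $R$, any $\varepsilon$-regular pair $(V_i,V_j)$ with $i,j\in I$ must have density strictly less than $d$ (otherwise $ij$ would be an edge of $R$), so each such pair contributes strictly less than $dm^2$ edges; the number of such pairs is at most $\binom{|I|}{2}<k^2/2$, giving total strictly less than $dk^2m^2/2\le dn^2/2$.

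Summing the three contributions gives $e(G[V_I])<\varepsilon n^2+\varepsilon n^2+dn^2/2=(2\varepsilon+d/2)n^2$, as desired. The strict inequality is inherited from type (c).

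There is no real obstacle here; the argument is a direct bookkeeping using the regularity partition. The only point deserving a line of care is the translation $m/n\le 2\varepsilon$ from the assumption $1/k\le 2\varepsilon$, which is precisely what makes the within-cluster contribution fit inside the $2\varepsilon n^2$ budget.
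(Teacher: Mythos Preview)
Your proof is correct and takes essentially the same approach as the paper: both split the edges of $G[V_I]$ into the same three types (within a cluster, between irregular pairs, between regular low-density pairs) and bound each using $km\le n$ together with the hypothesis $1/k\le 2\varepsilon$. The only cosmetic difference is that you write the bounds in terms of $m=|V_i|$ while the paper uses $n/k$ directly.
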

\begin{proof}
 Since $I$ is an independent set in $R$, the edges of $G[V_I]$ can be categorized into the following three types:
\begin{itemize}
\item  edges with both endpoints in $V_i$ for some $i\in I$. There are at most $k\binom{n/k}{2}$ such edges;
\item edges between distinct clusters $V_i$ and $V_j$, for some distinct $i,j\in I$ for which $(V_i,V_j)$ is not $\varepsilon$-regular.  There are at most $\varepsilon k^2\frac{n^2}{k^2}$ such edges;
\item edges between distinct clusters $V_i$ and $V_j$, for some distinct $i,j\in I$ for which $(V_i,V_j)$ is $\varepsilon$-regular but $d(V_i,V_j)<d$. There are at most $d\binom{k}{2}\frac{n^2}{k^2}$ such edges.
\end{itemize} 
Summing up the edges from these three types yields the desired bound.   
\end{proof}

\begin{lemma}\label{lmm:edit}
Let $r\geq 3$ be an integer and $\varepsilon,\beta,\mu,d>0$ with $\varepsilon\le d\le \frac{\beta}{20r}$. Let $G$ be an $n$-vertex graph with minimum degree $\delta(G)\ge(\frac{r-1}{r}-\beta)n$.  Suppose that $V(G)=V_0\cup V_1\cup\cdots\cup V_k$ is the $\varepsilon$-regular partition of $G$ where $1/k\le 2\varepsilon$ and $R$ is the $(\varepsilon,d)$-reduced graph of $G$ from this partition. If $|E(R)\triangle E(T_{k,r})|\le\mu k^2$, then $|E(G)\triangle E(T_{n,r})|\le(\beta+2\mu)n^2.$
\end{lemma}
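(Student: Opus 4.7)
The plan is to lift the closeness of the reduced graph $R$ to the Tur\'an graph $T_{k,r}$ up to the level of $G$ and $T_{n,r}$. First, I would fix an $r$-partition $[k]=U_1\cup\cdots\cup U_r$ with $|U_j|\in\{\lfloor k/r\rfloor,\lceil k/r\rceil\}$ realizing the bound $|E(R)\triangle E(T_{k,r})|\le\mu k^2$; in particular, both the number of edges of $R$ lying inside some class $U_j$ and the number of non-edges of $R$ between distinct classes are at most $\mu k^2$. Lift this to $V(G)$ by setting $W_j:=\bigcup_{i\in U_j}V_i$ for $j\in[r]$ and distributing the exceptional set $V_0$ across the $W_j$'s so that $|W_j|\in\{\lfloor n/r\rfloor,\lceil n/r\rceil\}$. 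Let $\widetilde{T}$ be the complete $r$-partite graph on $V(G)$ with parts $W_1,\ldots,W_r$, so $\widetilde{T}\cong T_{n,r}$, and it suffices to bound $|E(G)\triangle E(\widetilde{T})|$.

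Next, the ``extra'' edges of $G$ (those lying inside some $W_j$) split into five groups: (i) edges within a single cluster $V_i$ contribute at most $k\binom{n/k}{2}\le n^2/(2k)=o(n^2)$ since $k$ may be assumed large; (ii) edges between distinct clusters $V_i,V_{i'}$ with both $i,i'\in U_j$ and $ii'\in E(R)$ contribute at most $\mu k^2\cdot(n/k)^2=\mu n^2$, because such pairs $(i,i')$ are bad edges of $R$ with respect to $T_{k,r}$; (iii) edges between non-$\varepsilon$-regular cluster pairs contribute at most $\varepsilon k^2\cdot(n/k)^2=\varepsilon n^2$; (iv) edges between $\varepsilon$-regular pairs of density less than $d$ contribute at most $\binom{k}{2}\cdot d(n/k)^2\le dn^2/2$; and (v) edges incident to $V_0$ contribute at most $|V_0|\cdot n\le\varepsilon n^2$. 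Hence the total number of extra edges is at most $(\mu+2\varepsilon+d/2)n^2+o(n^2)$.

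The ``missing'' cross-part edges (non-edges of $G$ between distinct $W_j,W_{j'}$) are controlled via the minimum degree assumption. Since $e(G)\ge\tfrac{n}{2}\delta(G)\ge\tfrac{(r-1)n^2}{2r}-\tfrac{\beta n^2}{2}$ and $e(\widetilde{T})\le\tfrac{(r-1)n^2}{2r}$,
\begin{align*}
\#\{\text{missing cross-part edges}\}
&= e(\widetilde{T})-\bigl(e(G)-\#\{\text{extra edges}\}\bigr) \\
&\le \tfrac{\beta n^2}{2}+(\mu+2\varepsilon+d/2)n^2.
\end{align*}
Adding the two contributions yields
\[
|E(G)\triangle E(\widetilde{T})|\le (2\mu+4\varepsilon+d)n^2+\tfrac{\beta n^2}{2}+o(n^2).
\]
The hypothesis $\varepsilon\le d\le\beta/(20r)$ gives $4\varepsilon+d\le 5d\le\beta/(4r)\le\beta/12$ for $r\ge 3$, so the right-hand side is bounded above by $(\beta+2\mu)n^2$, as required.

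The proof is essentially a careful bookkeeping exercise rather than a conceptual step, so no serious obstacle is expected. The main (mild) care needed is to ensure that all the low-order error contributions from the regularity setup (non-regular pairs, low-density regular pairs, $V_0$, and inside-cluster edges) together fit inside the slack provided by the condition $d\le\beta/(20r)$; the numerology works out precisely because that condition makes $4\varepsilon+d$ strictly less than half of $\beta$.
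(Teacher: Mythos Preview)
Your proof is correct and takes essentially the same approach as the paper: fix an $r$-partition of $R$ witnessing closeness to $T_{k,r}$, lift it to $V(G)$, bound the inside-part edges via the standard regularity bookkeeping (the paper packages your steps (i), (iii), (iv) into a single citation of \cref{prop:16}), and then use the minimum-degree hypothesis to control the missing cross-edges. The only cosmetic difference is that you redistribute $V_0$ to balance the lifted parts, whereas the paper leaves $V_0$ out and absorbs it as an $\varepsilon n^2$ error; both treatments implicitly rely on $1/k\le 2\varepsilon$ (which the paper uses via \cref{prop:16}) to make the inside-cluster term negligible.
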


\begin{proof}
 Since $|E(R)\triangle E(T_{k,r})|\le\mu k^2$, there exists a balanced $r$-partition of $V(R)$, say $[k]=U_1\cup\cdots\cup U_r$, such that $\sum_{i=1}^r e(R[U_i])\leq \mu k^2$. For each $i\in [r]$, let $W_i=\cup_{j\in U_i} V_j$. By \cref{prop:16}, 
\begin{align*}
\sum_{i=1}^r e(G[W_i])\leq r(2\varepsilon+d/2)n^2+\mu k^2\frac{n^2}{k^2}<(\beta/8+\mu)n^2.
\end{align*}

Since $e(G)\ge\frac{\delta(G) n}{2}$ and the vertices in $V_0$ are incident with at most $\varepsilon n^2$ edges, the $r$-partite subgraph of $G$ defined on the vertex set $W_1\cup\cdots\cup W_r$ satisfies
\begin{align*}
e(G[W_1,\ldots,W_r])
&\geq \frac{\delta(G)n}{2}-\varepsilon n^2-\sum_{i=1}^r e(G[W_i])\geq \Big(\frac{r-1}{2r}-\frac{\beta}{2}\Big)n^2-\varepsilon n^2-\Big(\frac{\beta}{8}+\mu\Big)n^2\\
&=\Big(\frac{r-1}{2r}-\frac{5\beta}{8}-\mu-\varepsilon\Big)n^2.
\end{align*}

Since $e(T_{n,r})\le\frac{r-1}{2r}\cdot n^2$ and $\varepsilon\le\frac{\beta}{60}$, we have 
\begin{align*}
|E(G)\triangle E(T_{n,r})|&=|E(G)\setminus E(T_{n,r})|+|E(T_{n,r}) \setminus E(G)| \\
&\le \Big(\varepsilon+\frac{\beta}{8}+\mu\Big)n^2+\Big(\frac{5\beta}{8}+\mu+\varepsilon\Big)n^2\le(\beta+2\mu)n^2,
\end{align*}
as needed.
\end{proof}

The following two lemmas from \cite{allen2013chromatic} describe some useful properties of graphs. 

\begin{lemma}[\cite{allen2013chromatic}, Lemma 9]\label{lmm:lmm9}
Let $\alpha, \delta>0$ and $r, t \in \mathbb{N}$. Let $G$ be a graph on $n$ vertices, and $X \subseteq V(G)$.
\begin{enumerate}
    \item[\rm (a)] If $\delta(G) \geq \delta n$ and $ |X| \geq\big(\alpha^\frac{1}{r-2} (r-2)+(1-\delta)(r-3)\big) n$, then $G[X]$ contains at least $\alpha n^{r-2}$ copies of $K_{r-2}$.
    \item[\rm (b)] For any graph $H$ with $\chi(H)\le r$, if $G[N(x)]$ contains at least $\alpha n^{(r-1)|H|}$ copies of $K_{r-1}[|H|]$ for every $x \in X$, then we have either $H \subseteq G$ or $|X| \leq |H| / \alpha$.
\end{enumerate}
\end{lemma}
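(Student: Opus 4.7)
The plan is to handle the two parts separately: (a) by an elementary greedy selection of $r-2$ vertices of $X$ forming a clique, and (b) by a double-counting argument that concentrates many $X$-vertices on a single copy of $K_{r-1}[|H|]$.

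For (a), I would build copies of $K_{r-2}$ inside $X$ one vertex at a time. Suppose $v_1,\ldots,v_j \in X$ already form a $K_j$; the next vertex $v_{j+1}$ must lie in $X \cap \bigcap_{i \le j} N(v_i)$. Iterating the trivial bound $|A\cap B| \ge |A|+|B|-n$ together with $\delta(G) \ge \delta n$ yields
$$|X \cap N(v_1) \cap \cdots \cap N(v_j)| \;\ge\; |X| - j(1-\delta)n.$$
Hence the number of ordered $(r-2)$-tuples forming a $K_{r-2}$ in $G[X]$ is at least $\prod_{j=0}^{r-3}\bigl(|X| - j(1-\delta)n\bigr)$. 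The hypothesis $|X| \ge \alpha^{1/(r-2)}(r-2)n + (1-\delta)(r-3)n$ is designed so that each factor is at least $\alpha^{1/(r-2)}(r-2)n$, making the product at least $\alpha(r-2)^{r-2} n^{r-2} \ge (r-2)!\,\alpha n^{r-2}$. Dividing by the $(r-2)!$ orderings of each clique finishes part (a).

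For (b), I would double-count pairs $(x, K)$ where $x \in X$ and $K$ is a copy of $K_{r-1}[|H|]$ inside $G[N(x)]$. The assumption gives at least $|X|\cdot\alpha n^{(r-1)|H|}$ such pairs, while the total number of (ordered) copies of $K_{r-1}[|H|]$ in $G$ is at most $n^{(r-1)|H|}$. By averaging, some copy $K_0$ of $K_{r-1}[|H|]$ lies in the common neighborhood of at least $\alpha|X|$ vertices of $X$. If $|X| > |H|/\alpha$, pick $|H|$ such vertices and collect them into a set $S$. The $r-1$ parts of $K_0$ together with $S$ form $r$ vertex sets, each of size $|H|$, with complete bipartite edges between any two distinct parts. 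Using a proper $r$-coloring of $H$ (which exists since $\chi(H) \le r$) to assign each color class of $H$ to a distinct part produces an embedding of $H$ into $G$.

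The main subtlety I foresee is in the final embedding step of (b): one must observe that any potential edges inside $S$ (which need not be independent in $G$) do not obstruct the embedding, since a proper $r$-coloring places no edge of $H$ inside a single color class, so only the cross-part edges of the $K_r[|H|]$ structure are actually required. The rest of the argument is routine counting; in (a) the only minor calculation to keep straight is verifying $(r-2)^{r-2} \ge (r-2)!$, which is clear term-by-term.
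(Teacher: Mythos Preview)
Your proposal is correct. The paper itself does not prove this lemma; it cites it verbatim from \cite{allen2013chromatic} (Lemma~9) and only remarks that the original extra hypothesis on $H$ is unnecessary. Your argument for both parts is the natural one and is essentially what appears in the cited source: the greedy common-neighborhood count for (a), with the clean observation that each of the $r-2$ factors is at least $\alpha^{1/(r-2)}(r-2)n$ and $(r-2)^{r-2}\ge (r-2)!$; and the averaging/double-counting for (b), placing the $r$ color classes of $H$ into $S$ and the $r-1$ parts of $K_0$. The subtlety you flag---that $S$ need not be independent because $H$ has no edge within a color class---is exactly the point behind the paper's remark that the stronger hypothesis on $H$ is never used.
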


Although Lemma 9 in \cite{allen2013chromatic} is stated for graphs $H$ satisfying $H \subseteq F \vee K_{r-2}[t]$ for some foreset $F$, this condition is not used in the proof. 
Hence the above lemma also holds.

\begin{lemma}[\cite{allen2013chromatic}, Lemma 10]\label{lmm:lmm10}
    Let $\alpha, \delta>0$ and $r, t \in \mathbb{N}$, let $F$ be a forest, and suppose that $H \subseteq F\vee K_{r-2}[t]$. Let $G$ be an $H$-free graph on $n$ vertices, and let $X \subseteq V(G)$ be such that every edge $x y \in E(G[X])$ is contained in at least $\alpha n^{r-2}$ copies of $K_{r}$ in $G$.
    Then 
    $\chi(G[X]) \leq(2|F| / \alpha')+1$ for some $\alpha'=\alpha'(\alpha,r,t)$.
\end{lemma}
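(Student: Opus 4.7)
The plan is to argue by contradiction, combining a supersaturation step with a double-counting / minimum-degree extraction argument.

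First, I would upgrade the hypothesis from ``$K_r$-count per edge'' to ``$K_{r-2}[t]$-count per edge'': for every edge $xy\in E(G[X])$, the common neighborhood $N(x)\cap N(y)$ hosts at least $\alpha n^{r-2}/(r-2)!$ copies of $K_{r-2}$, and by a standard blow-up supersaturation argument in the spirit of Erd\H{o}s--Simonovits~\cite{erdos1966limit,simonovitsmethod} (or alternatively via~\cref{thm:counting} applied to a regular partition of $N(x)\cap N(y)$) this set in fact contains at least $\alpha' n^{(r-2)t}$ copies of $K_{r-2}[t]$ for some $\alpha'=\alpha'(\alpha,r,t)>0$. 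I then take this $\alpha'$ as the constant appearing in the statement.

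Next, assume for contradiction that $\chi(G[X])>2|F|/\alpha'+1$. Iteratively deleting vertices of degree less than $2|F|/\alpha'$ yields a nonempty subgraph $G'\subseteq G[X]$ with $\delta(G')\ge 2|F|/\alpha'$. I would then double-count pairs $(e,K)$ where $e=xy\in E(G')$ and $K$ is a copy of $K_{r-2}[t]$ in $G$ with $V(K)\subseteq N(x)\cap N(y)$. The previous step lower-bounds this count by $\alpha'\,e(G')\,n^{(r-2)t}$, while the total number of $K_{r-2}[t]$-copies in $G$ is trivially at most $n^{(r-2)t}$; averaging then produces a fixed copy $K^*\cong K_{r-2}[t]$ that lies in the common neighborhood of at least $\alpha' e(G')$ edges of $G'$. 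Writing $G^*:=G'[N(K^*)]$ and using $\delta(G')\ge 2|F|/\alpha'$, one obtains $e(G^*)\ge|F|\cdot|V(G^*)|$, so $G^*$ has average degree at least $2|F|$.

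Finally, any graph of average degree at least $2|F|$ contains a subgraph of minimum degree at least $|F|$, and any such subgraph embeds every forest on $|F|$ vertices by a classical greedy argument. In particular, $F\subseteq G^*\subseteq G[N(K^*)]$. Since $V(K^*)\cap N(K^*)=\varnothing$ (the blowup has no loops), the embedded $F$ is vertex-disjoint from and completely joined to $K^*\cong K_{r-2}[t]$ in $G$, giving $F\vee K_{r-2}[t]\subseteq G$ and hence $H\subseteq G$, a contradiction. I expect the first step -- the blow-up supersaturation, with quantitative control of $\alpha'$ in terms of $\alpha,r,t$ -- to be the main technical obstacle; the remainder is a simple double count together with standard min-degree-subgraph extraction and greedy forest embedding.
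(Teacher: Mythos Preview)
Your argument is correct. The paper does not prove \cref{lmm:lmm10}; it is simply quoted from \cite{allen2013chromatic}, so there is no in-paper proof to compare against. Your reconstruction---supersaturation to pass from many $K_{r-2}$'s in $N(x)\cap N(y)$ to many $K_{r-2}[t]$'s (the relevant reference in the paper being \cite{erdHos1983supersaturated} rather than \cite{erdos1966limit,simonovitsmethod}), a degeneracy argument to extract $G'\subseteq G[X]$ with $\delta(G')\ge 2|F|/\alpha'$, an averaging over all $K_{r-2}[t]$-copies to find one $K^*$ with $e(G'[N(K^*)])\ge \alpha' e(G')\ge |F|\,|V(G')|\ge |F|\,|V(G^*)|$, and then \cref{fact:4} to embed $F$ in $G^*$---is exactly the natural (and presumably the original) proof. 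One small remark: the disjointness $V(K^*)\cap V(F)=\varnothing$ is automatic because $V(G^*)\subseteq V(G')\subseteq X$ while $V(K^*)\subseteq N(x)\cap N(y)$ for edges $xy\in X$, but even without using $X$, your observation that no vertex of the blowup lies in the common neighborhood of the blowup suffices.
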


The following two well-known facts are useful.

\begin{fact}\label{fact:4}
  Let $F$ be a forest and $G$ be a graph on $n \geq 1$ vertices. 
  If $e(G) \geq|F| n$, then $F \subseteq G$.
\end{fact}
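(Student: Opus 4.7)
The plan is to combine the classical ``peel low-degree vertices'' argument with a greedy embedding of the forest. The key observation is that any forest on $k=|F|$ vertices embeds into any graph of minimum degree at least $k-1$, so it suffices to locate a subgraph of $G$ with such minimum degree.

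First I would pass to a dense subgraph. Starting from $G_0=G$, I repeatedly delete a vertex of degree less than $|F|$ in the current graph, producing a nested sequence $G_0\supseteq G_1\supseteq\cdots$. Each deletion removes at most $|F|-1$ edges, so if the process were ever to delete all $n$ vertices we would have erased at most $(|F|-1)n<|F|n\le e(G)$ edges in total, contradicting $e(G)\ge|F|n$. Therefore the process must terminate at a non-empty subgraph $G^*\subseteq G$ with $\delta(G^*)\ge|F|$.

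Next I would embed $F$ greedily into $G^*$. Fix an ordering $v_1,\ldots,v_{|F|}$ of $V(F)$ such that every $v_i$ with $i\ge 2$ has at most one neighbor among $\{v_1,\ldots,v_{i-1}\}$; such an ordering exists because a forest can be built component by component by attaching one leaf at a time. Map $v_1$ to an arbitrary vertex of $G^*$. For $i\ge 2$, if $v_i$ has no earlier neighbor in $F$ (i.e.\ $v_i$ starts a new component), map it to any previously unused vertex of $G^*$; otherwise let $u$ be the image of the unique earlier neighbor, and note that $u$ has at least $|F|$ neighbors in $G^*$ while at most $i-2\le|F|-2$ of them have already been used, so some unused neighbor is available to host $v_i$.

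There is no real obstacle; the only item meriting care is the bookkeeping in the peeling step, which already yields $\delta(G^*)\ge|F|$ (in fact $\delta(G^*)\ge|F|-1$ would already suffice for the greedy embedding, so the hypothesis $e(G)\ge|F|n$ leaves some slack).
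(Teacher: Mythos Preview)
Your argument is correct. The paper does not supply a proof of this fact; it is stated as a well-known result without justification, and your peel-then-greedy-embed argument is the standard one.
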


\begin{fact}\label{fact:forest-free}
Let $F$ be a forest and $G$ be an $F$-free graph, then $\chi(G)\le |F|$.
\end{fact}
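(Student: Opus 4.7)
The plan is to reduce this to a greedy-coloring argument via degeneracy. Recall that if every subgraph of a graph has a vertex of degree at most $k-1$, then a standard greedy coloring shows its chromatic number is at most $k$. So to prove $\chi(G) \le |F|$, it suffices to show that every subgraph $H \subseteq G$ has a vertex of degree at most $|F|-1$. Since every subgraph of an $F$-free graph is itself $F$-free, this reduces to the following embedding statement: \emph{every graph $H$ with $\delta(H) \ge |F|$ contains $F$ as a subgraph.}

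To prove the embedding statement, I would argue by a straightforward greedy embedding. Since $F$ is a forest, I can order its vertices as $v_1, v_2, \ldots, v_k$ (where $k = |F|$) such that each $v_i$ with $i \ge 2$ either has no earlier neighbor in $F$ (it begins a new tree component) or has exactly one earlier neighbor $v_j$ in $F$; such an order is obtained by rooting each tree component and listing its vertices in BFS order. I then build an embedding $\phi:V(F) \to V(H)$ one vertex at a time. When $v_i$ starts a new component, any vertex of $H$ not yet used is an admissible image, and such a vertex exists since $|V(H)| \ge \delta(H)+1 \ge |F|+1 > i-1$. When $v_i$ has a unique earlier neighbor $v_j$, I need an unused neighbor of $\phi(v_j)$ in $H$; since $\phi(v_j)$ has at least $|F|$ neighbors in $H$ and at most $i-1 \le |F|-1$ vertices have been used so far, at least one admissible choice remains. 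Thus $F$ embeds into $H$, completing the proof.

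Chaining the two parts together: any subgraph $H$ of $G$ is $F$-free, so by the contrapositive of the embedding statement it satisfies $\delta(H) \le |F|-1$. Hence $G$ is $(|F|-1)$-degenerate, and greedy coloring gives $\chi(G) \le |F|$. I do not anticipate any real obstacle here, as the argument is entirely classical; the only place requiring a moment of thought is ensuring the existence of the vertex ordering, which is automatic for a forest. (In fact the argument yields the stronger bound $\chi(G) \le |F|-1$, but the weaker bound stated in the fact is all that is needed in the sequel.)
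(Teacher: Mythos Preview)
Your proof is correct and entirely standard. The paper itself states this fact as well-known and provides no proof, so there is nothing to compare against; your degeneracy-plus-greedy-embedding argument is exactly the classical justification one would expect.

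One minor remark: your parenthetical claim that the argument actually yields $\chi(G)\le |F|-1$ does not follow from the embedding statement exactly as you wrote it (with hypothesis $\delta(H)\ge |F|$), but it does follow once you observe that $\delta(H)\ge |F|-1$ already suffices for the greedy embedding, since $\phi(v_j)$ itself is among the $i-1$ used vertices but is not its own neighbor, leaving at most $i-2\le |F|-2$ of its neighbors blocked. This is a cosmetic point and does not affect the validity of the main argument.
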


We will utilize the classic result of Andr\'asfai, Erd\H{o}s, and S\'os \cite{andrasfai1974connection}. 

\begin{theorem}[\cite{andrasfai1974connection}]\label{thm:1974Andrasfai}
Let $r\geq 3$ and let $G$ be a $K_r$-free graph on $n$ vertices such that $\delta(G)>\frac{3 r-7}{3 r-4}\cdot n$. Then $\chi(G) \leq r-1$.
\end{theorem}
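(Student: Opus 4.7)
The plan is to prove the theorem by induction on $r$, with the triangle-free case $r = 3$ as the base. For the base case, I would argue by contradiction: assuming $G$ is triangle-free with $\delta(G) > 2n/5$ and non-bipartite, take a shortest odd cycle $C$ of length $2\ell+1 \geq 5$. For any three consecutive vertices $u, v, w$ on $C$, triangle-freeness forces $N(u) \cap N(v) = N(v) \cap N(w) = \emptyset$, so inclusion-exclusion on $N(u) \cup N(v) \cup N(w)$ combined with the degree bound yields $|N(u) \cap N(w)| > n/5 > 0$. A case analysis on the location of a common neighbor $x \in N(u) \cap N(w) \setminus \{v\}$—whether $x$ lies off $C$ (where its adjacencies with nearby cycle vertices force a triangle or a shorter alternating path) or on $C$ (where it creates a chord shortening an odd subcycle)—produces either a shorter odd cycle (contradicting minimality of $|C|$) or a direct triangle (contradicting the hypothesis).

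For the inductive step $r \geq 4$, the first observation is that $\tfrac{3r-7}{3r-4} > \tfrac{r-3}{r-2}$ (a one-line computation), so $\delta(G) > \tfrac{r-3}{r-2} n$, exceeding the Tur\'an threshold for $K_{r-1}$, and hence $G$ contains a copy of $K_{r-1}$. I would fix such a clique $K = \{v_1, \ldots, v_{r-1}\}$ chosen to maximize $\sum_i d(v_i)$ among all $K_{r-1}$-subgraphs of $G$, and set $A_i := V(G) \setminus N(v_i)$. The $K_r$-freeness of $G$ guarantees no vertex is adjacent to every $v_i$, so $V(G) = A_1 \cup \cdots \cup A_{r-1}$, with each $|A_i| < \tfrac{3n}{3r-4}$. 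The central structural claim is that every vertex lies in \emph{exactly} one $A_i$—equivalently, every $v \in V(G)$ has precisely one non-neighbor in $K$. Once this claim is established, each $A_i$ is an independent set (an edge $uw \subseteq A_i$ together with the $(r-2)$-clique $K \setminus \{v_i\}$, to which both $u$ and $w$ are adjacent by the claim, would form a $K_r$), and the partition $V(G) = A_1 \cup \cdots \cup A_{r-1}$ gives the required proper $(r-1)$-coloring.

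The main obstacle is establishing this central structural claim. Suppose for contradiction that some vertex $v$ has two non-neighbors $v_i, v_j \in K$. The plan is to examine the common neighborhood $N^* := \bigcap_{k \notin \{i,j\}} N(v_k)$ of the $(r-3)$-clique $K \setminus \{v_i, v_j\}$; this subgraph is $K_3$-free (a triangle in $N^*$ together with $K \setminus \{v_i, v_j\}$ would form $K_r$), and degree-counting yields $|N^*| > \tfrac{5n}{3r-4}$ with $\delta(G[N^*]) > \tfrac{2n}{3r-4}$. Applying the base case to $G[N^*]$—here the precise slack in $\delta(G) - \tfrac{3r-7}{3r-4}n$ must be transported to yield $\delta(G[N^*])/|N^*| > 2/5$—gives a bipartition of $N^*$. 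Combining this bipartition with the constraints from $v, v_i, v_j$ should produce a vertex $v' \in N^*$ for which the swap $K' = (K \setminus \{v_i\}) \cup \{v'\}$ (or a similar modification) is again a $K_{r-1}$ in $G$ and satisfies $\sum_{u \in K'} d(u) > \sum_{u \in K} d(u)$, contradicting the maximality of $K$. Because the AES threshold $\tfrac{3r-7}{3r-4}$ is tight, this swap argument must be calibrated with essentially no slack in the degree estimates, and propagating the strict inequality $\delta(G) > \tfrac{3r-7}{3r-4}n$ through to a usable strict inequality for the base case is the delicate technical heart of the proof.
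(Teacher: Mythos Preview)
The paper does not prove this theorem; it is quoted as a known result from Andr\'asfai, Erd\H{o}s, and S\'os and used as a black box throughout. There is therefore no in-paper proof to compare your sketch against.

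On its own merits, your sketch has two genuine gaps. In the base case $r=3$, the observation that $|N(u)\cap N(w)|>1$ for vertices $u,w$ at distance~$2$ on the shortest odd cycle $C$ does not by itself yield a shorter odd cycle or a triangle: if the extra common neighbour $x\ne v$ lies off $C$, you only obtain another $2$-path from $u$ to $w$, which replaces nothing. Your parenthetical ``adjacencies with nearby cycle vertices force a triangle or a shorter alternating path'' is where the argument is supposed to happen, and it is not supplied. The standard proof here is a counting argument: every vertex off $C$ has at most two neighbours on $C$ (three would create a shorter odd cycle by parity), so summing degrees over $C$ gives $|C|(\delta(G)-2)\le 2(n-|C|)$, hence $\delta(G)\le 2n/|C|\le 2n/5$.

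In the step for general $r$, the decisive move---producing a replacement vertex $v'$ so that $(K\setminus\{v_i\})\cup\{v'\}$ is a $K_{r-1}$ of strictly larger total degree---is left entirely unspecified. Knowing that $G[N^*]$ is bipartite with $v_i,v_j$ in opposite parts does not, on its own, locate such a $v'$; and the offending vertex $v$ need not even lie in $N^*$, since you only assumed $v$ has \emph{at least} two non-neighbours in $K$. You correctly flag that there is essentially no slack at the threshold $\tfrac{3r-7}{3r-4}$, which is precisely why a hand-wave at this point is not enough: you have reduced to the right neighbourhood and verified that the base case applies to it, but the mechanism converting bipartiteness of $G[N^*]$ into a contradiction with the maximality of $K$ is still missing.
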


\section{Stability for $\delta_{\chi}(H)=\frac{2r-5}{2r-3}$}\label{sec:lambda-stability}
In this section we prove the following theorem, which implies \cref{thm:lambda-stability}.

\begin{theorem}\label{thm:lambda-stability-precise-version} 
Let $r\ge 3$ and $H$ be a graph with $\chi(H)=r$ and $\delta_\chi(H)=\frac{2r-5}{2r-3}$.
For every $0<\beta<(40r|H|)^{-2}$, there exist $C=C(H,\beta)$ and $C'=C'(H)$ such that the following hold.

For every $n$-vertex $H$-free graph $G$ with $\delta(G)\ge\big(\frac{2r-5}{2r-3}-\beta\big)n$ and $\chi(G)\ge C$, we have
\begin{align}\label{eq:poker}
\Big|E(G)\triangle E\Big(K_{r-1}\Big(\frac{2n}{2r-3},\ldots,\frac{2n}{2r-3},\frac{n}{2r-3}\Big)\Big)\Big|\le 20r|H|\sqrt{\beta} n^2. 
\end{align}
Moreover, $G$ admits a vertex partition $V(G)=A\cup B_1\cup\cdots\cup B_{r-1}$ with the following properties:
\begin{itemize}
\item [{\rm (i)}] $|A|\le 7r|H|\sqrt{\beta}n$, $|B_i|=\big(\frac{2}{2r-3}\pm 2\sqrt{\beta}\big)n$ for each $i\in[r-2]$, and $|B_{r-1}|=\big(\frac{1}{2r-3}\pm  5r|H|\sqrt{\beta}\big)n$. Moreover, for every $i\in [r-2]$ and $v\in B_i$, $v$ has at most $5\sqrt{\beta}n$ non-neighbors outside $B_i$;
\item [{\rm (ii)}] for every forest $F\in\mathcal{M}(H)$ and $i\in[r-1]$, $G[B_i]$ is $F$-free;
\item [{\rm (iii)}]  $\chi(G[A])\ge C-(r-1)|H|$ but $\chi_f(G[A])\le C'$.
\end{itemize} 
\end{theorem}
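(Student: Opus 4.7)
The plan is to apply the regularity lemma to reduce the question to the reduced graph $R$, use the $H$-free structural lemmas to extract a small high-chromatic ``core'' $A$, and then show that the backbone $V(G)\setminus A$ admits a near-Tur\'an $(r-1)$-partition with the asserted unbalanced sizes.

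\emph{Setup and extraction of $A$.} Apply \cref{thm:RL} with $\varepsilon\ll d\ll\beta$ and $k_0$ large to obtain an $(\varepsilon,d)$-regular partition $V_0\cup V_1\cup\cdots\cup V_k$ with reduced graph $R$ satisfying $\delta(R)\ge(\tfrac{2r-5}{2r-3}-2\beta)k$. Since $\chi(H)\le r$, a $K_r$ in $R$ would yield $K_r[|H|]\supseteq H$ in $G$ by \cref{thm:counting}, so $R$ is $K_r$-free. Since $\delta_\chi(H)\ne\tfrac{r-2}{r-1}$, \cref{thm:morris} provides a forest $F\in\mathcal M(H)$ with $H\subseteq F\vee K_{r-2}[|H|]$. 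Let $X\subseteq V(G)$ consist of those $v$ such that every edge incident to $v$ lies in at least $\alpha n^{r-2}$ copies of $K_r$, for some $\alpha$ polynomial in $\beta$. Then \cref{lmm:lmm10} gives $\chi(G[X])\le C_0=C_0(r,|F|,\alpha)$. Set $A_0=V(G)\setminus X$; then $\chi(G[A_0])\ge C-C_0$, and a double-counting argument using $K_r$-freeness of $R$ and $\delta(R)$ shows $|A_0|\le O(\sqrt\beta\,|H|\,n)$.

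\emph{Macro partition with unbalanced sizes.} Within $G[X]$, bounded chromatic number plus $K_r$-freeness plus inherited high minimum degree yield, via stability for Tur\'an, an $(r-1)$-partition $X=\tilde B_1\cup\cdots\cup\tilde B_{r-1}$ with $|E(G[X])\triangle E(T_{|X|,r-1})|=o(n^2)$. The specific unbalanced sizes are pinned down as follows. Each $a\in A_0$ satisfies $d_G(a)\ge(\tfrac{2r-5}{2r-3}-\beta)n$; moreover, since $\chi(G[A_0])\to\infty$, $A_0$ hosts adjacent pairs $a,a'$. Combining this with $K_r$-freeness of $G$, no vertex $a\in A_0$ that has an adjacent partner in $A_0$ can be fully adjacent to more than $r-3$ of the parts $\tilde B_i$: otherwise $\{a,a'\}$ together with one vertex from each of $r-2$ fully-adjacent parts would span $K_r$. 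Thus such an $a$ is fully adjacent to at most $r-3$ parts, has density roughly $\tfrac12$ in one ``anchor'' part, and density $0$ in at least one other. The resulting degree identity $\tfrac{|\tilde B_{\mathrm{anchor}}|}{2}+\sum_{r-3\text{ full parts}}|\tilde B_i|\ge(\tfrac{2r-5}{2r-3}-o(1))n$, combined with the backbone upper bound $|\tilde B_i|\le(\tfrac{2}{2r-3}+o(1))n$ (from min-degree of interior backbone vertices) and $\sum|\tilde B_i|=|X|\approx n$, then forces each ``large'' part to have size $(\tfrac{2}{2r-3}\pm o(1))n$ and the unique ``small'' part to have size $(\tfrac{1}{2r-3}\pm o(1))n$. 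This size-forcing step is the main obstacle: the minimum degree alone admits a continuous family of unbalanced partitions, and only the $K_r$-free attachment analysis of the Kneser-like substructure carrying $\chi(G[A])$ pins down the specific profile.

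\emph{Refinement and properties (i), (ii).} Move to $A$ any $v\in\tilde B_i$ ($i\in[r-2]$) with more than $5\sqrt\beta\,n$ non-neighbors outside $\tilde B_i$; only $O(\sqrt\beta\,n)$ such vertices arise by counting missing edges (controlled via \cref{lmm:edit} adapted to the unbalanced Tur\'an graph). Set $B_i=\tilde B_i\setminus A$ and $A=A_0\cup(\text{moved})$; this yields (i) and \eqref{eq:poker}. For (ii), if some $B_i$ contained a forest $F^*\in\mathcal M(H)$, we embed $H$ by placing the two color classes of $F^*$ onto the copy of $F^*$ in $B_i$ and greedily embedding the remaining $r-2$ color classes of $H$ into $r-2$ distinct $B_j$'s ($j\ne i$); since each embedded vertex has at most $5\sqrt\beta\,n$ non-neighbors in each other $B_{j'}$ by (i), common neighborhoods at each step remain $\Omega(n)$, so $H\subseteq G$, contradicting $H$-freeness.

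\emph{Chromatic bounds (iii).} Since each $B_i$ is $F$-free with $|F|\le|H|$, \cref{fact:forest-free} gives $\chi(G[B_i])\le|H|$, whence $\chi(G\setminus A)\le(r-1)|H|$ and $\chi(G[A])\ge C-(r-1)|H|$. For the fractional upper bound, the subgraph $G'=G[A\cup B_1\cup\cdots\cup B_{r-2}]$ satisfies $\delta(G')\ge(\tfrac{r-3}{r-2}-o(1))|G'|$ (direct count after dropping $B_{r-1}$ of size $\tfrac{n}{2r-3}+o(n)$) and is $H$-free. Since $\mathcal M(H)$ contains a forest, \cref{thm:fractional-chromatic-threshold-2} gives $\delta_{\chi_f}(H)=\tfrac{r-3}{r-2}$, so $\chi_f(G')\le C'$ for some $C'=C'(H,\beta)$; hence $\chi_f(G[A])\le\chi_f(G')\le C'$.
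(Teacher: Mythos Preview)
Your extraction of the small high-chromatic core is fundamentally broken. You set $X$ to be the vertices all of whose incident edges lie in at least $\alpha n^{r-2}$ copies of $K_r$, and $A_0=V(G)\setminus X$. But the extremal graphs for this problem---for instance the $r$-Hajnal graphs of \cref{construction:K_r}---are $K_r$-free, so \emph{no} edge of $G$ lies in any $K_r$; then $X=\varnothing$ and $A_0=V(G)$, not a set of size $O(\sqrt\beta\,n)$. Your unspecified ``double-counting argument using $K_r$-freeness of $R$'' cannot rescue this: these examples are genuinely $H$-free with the asserted minimum degree and unbounded $\chi$, yet have $|A_0|=n$. More generally, $H$-freeness with $\chi(H)=r$ never forces $G$ to contain a single $K_r$, so defining $X$ through $K_r$-extensions of edges is the wrong primitive.

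The paper's route is essentially different and hinges on a step you do not reach. It partitions $V(G)=\bigcup_{I\subseteq[k]} X_I$ according to the set $I$ of clusters in which a vertex has linear degree, and proves two bounded-$\chi$ claims: if $|I|\le(\tfrac{2r-4}{2r-3}-O(\beta))k$ then the smallness of $I$ forces large common neighbourhoods for edges of $G[X_I]$, so \cref{lmm:lmm9}(a) and \cref{lmm:lmm10} bound $\chi(G[X_I])$; and if $R[I]\supseteq K_{r-1}$ then \cref{lmm:lmm9}(b) bounds $|X_I|$. Pigeonholing on $\chi(G)\ge C$ then produces a specific $I$ with $|I|\approx\tfrac{2r-4}{2r-3}k$ and, crucially, $R[I]$ \emph{$K_{r-1}$-free}. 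This $K_{r-1}$-freeness (strictly stronger than the $K_r$-freeness of $R$ you invoke), combined with $\delta(R[I])>\tfrac{3r-10}{3r-7}|I|$, lets \cref{thm:1974Andrasfai} give an $(r-2)$-colouring of $R[I]$; the $r-2$ colour classes become the parts $B_1,\dots,B_{r-2}$ of size $\approx\tfrac{2n}{2r-3}$, and $B_{r-1}$ is carved out as a common neighbourhood of a transversal $T_1,\dots,T_{r-2}$. By contrast, your size-forcing paragraph invokes ``$K_r$-freeness of $G$'' (which is false for general $H$) and an unargued density-$\tfrac12$ anchor, and your appeal to ``stability for Tur\'an'' is not available at minimum degree $\tfrac{2r-5}{2r-3}<\tfrac{r-2}{r-1}$.
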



\begin{proof}
Let $C$ be a sufficiently large integer whose value will be determined later.
Let $G$ be an $H$-free graph on $n$ vertices with $\delta(G)\ge\big(\frac{2r-5}{2r-3}-\beta\big)n$ and $\chi(G)>C$.\footnote{As $C$ is sufficiently large, we can assume $n\ge\chi(G)>C$ is also sufficiently large.} 
Applying \cref{thm:RL} to $G$ with $d:=\frac{\beta}{4}$, $\varepsilon:=\frac{d^2}{2d+2|H|}$ and $k_0=\lceil\frac{1}{2\varepsilon}\rceil$, we obtain a partition $V(G)=V_0\cup V_1\cup \cdots \cup V_k$ of $G$, where $k_0\leq k\leq k_1(k_0,\varepsilon,\delta,d)=k_1(H,\beta)$, together with an $(\varepsilon,d)$-reduced graph $R$ on the vertex set $V(R)=[k]$, such that
$\delta(R)\geq\big(\frac{2r-5}{2r-3}-2\beta\big)k$.  

Define a new partition of $V(G)$ by setting, for every subset $I\subseteq [k]$,
\begin{align*}
X_I:=\{x\in V(G): i\in I\Leftrightarrow|N(x)\cap V_i|\geq d|V_i|\}.
\end{align*}
That is, $X_I$ consists of all vertices having linear degree to $V_i$ when and only when $i\in I$.

First, we show that if $|I|$ is not large, then $\chi(G[X_I])$ is necessarily bounded.
\begin{claim}\label{claim:chi(X_I)-bounded-for-small_I}
There exists $C_1=C_1(H,\beta)$ such that for every $I\subseteq V(R)$ with $|I|\leq (\frac{2r-4}{2r-3}-(2r-1)\beta )k$, we have $\chi(G[X_I])\leq C_1$.
\end{claim}

\begin{pf}
For any edge $xy\in E(G[X_I])$, let $X= N(x)\cap N(y)\cap V_I$. Note that
$$\min\{|N(x)\cap V_I|,|N(y)\cap V_I|\}\ge \delta(G)-|V_0|-d(n-|V_0|)\ge \Big(\frac{2r-5}{2r-3}-2\beta\Big)n,$$
which implies that
\begin{align*}
|X|\ge 2\cdot\Big(\frac{2r-5}{2r-3}-2\beta\Big)n-|V_I|\ge \Big(\frac{2r-6}{2r-3}+(2r-5)\beta\Big)n.
\end{align*}
Applying \cref{lmm:lmm9}(a) with $\alpha=\beta^{r-2}, \delta=\frac{2 r-5}{2 r-3}-\beta$, and $X=N(\{x, y\})$, $G[N(\{x,y\})]$ contains at least $\beta^{r-2}n^{r-2}$ copies of $K_{r-2}$. 
Since $G$ is $H$-free,  \cref{lmm:lmm10} shows that $\chi(G[X_I])$  is bounded by some constant $C_1=C_1(H,\beta)$.
\end{pf}

Next we show that if $R[I]$ contains a $K_{r-1}$, then $\chi(G[X_I])$ is also bounded.
\begin{claim}\label{claim:chi(X_I)-bounded-for-large_I}
    There exists $C_2=C_2(H,\beta)$ such that for every $I\subseteq V(R)$ with $K_{r-1}\subseteq R[I]$, we have $\chi(G[X_I])\leq C_2$.
\end{claim}  
\begin{pf}
  Suppose that $R[I]$ contains a copy of $K_{r-1}$ with vertex set $[r-1] \subseteq I$.
  For any $x \in X_I$, we have $|N(x) \cap V_i| \ge d |V_i|$ for all $i \in [r-1]$.
 By \cref{fact:slicinglemma}, it follows that for every pair $\{i, j\} \in \binom{[r-1]}{2}$, the pair $(N(x) \cap V_i, N(x) \cap V_j)$ is $(\varepsilon/d, d - \varepsilon)$-regular. Hence, by \cref{thm:counting}, $G[N(x)]$ contains at least $\alpha n^{(r-1)|H|}$ copies of $K_{r-1}[|H|]$ for some $\alpha=\alpha(H,\beta)>0$. 
  Since $G$ is $H$-free, it follows from \cref{lmm:lmm9}(b) that $\chi(G[X_I])\leq |X_I|\leq |H| / \alpha$.
  By letting $C_2(H,\beta) = |H|/\alpha$, this proves the claim.
\end{pf}

Let $C= 3^k \max\{C_1, C_2\}$.
As $\sum_{I\subseteq[k]}\chi(G[X_I])\ge \chi(G)\ge C$, it follows from the pigeonhole principle that there exists a set $I\subseteq [k]$ such that $\chi(G[X_I])\ge C/2^k > \max\{C_1,C_2\}$. 
Fix such a set $I$.
Then by \cref{claim:chi(X_I)-bounded-for-small_I} and \cref{claim:chi(X_I)-bounded-for-large_I}, $|I|\ge\big(\frac{2r-4}{2r-3}-(2r-1)\beta\big)k$ and $R[I]$ is $K_{r-1}$-free.
Moreover, for $r\geq 4$, 
\begin{align*}
\delta(R[I])&\geq \delta(R)-(k-|I|)\ge|I|-\Big(\frac{2}{2r-3}+2\beta\Big)k>\frac{3r-10}{3r-7}|I|.
\end{align*}
It follows from \cref{thm:1974Andrasfai} that $\chi(R[I])\le r-2$, and this holds for $r=3$ as well since $R[I]$ is an independent set in this case.

\vskip 0.25em 
Now, we describe the process of constructing the partition.  
Recall that any vertex in $R$ has at most $(\frac{2}{2r-3}+2\beta)k$ non-neighbors. Consequently, any independent set in $R$ has size at most $(\frac{2}{2r-3}+2\beta)k$.

\begin{itemize}
\item \textbf{Step 1.} As $\chi(R[I])\le r-2$, $I$ admits a partition $I=\bigcup_{i=1}^{r-2}Z_i$, where each $Z_i$ is an independent set in $R$.
\item \textbf{Step 2.} For each $i\in[r-2]$, let $V_{Z_i}=\bigcup_{j\in Z_i} V_j$, $B_i=\{v\in V_{Z_i}: |N(v)\cap V_{Z_i}|< 2\sqrt{\beta}n\}\subseteq V_{Z_i}$.
\item \textbf{Step 3.} Select sets $T_1,\dots, T_{r-2}$ in order,  each of size $|H|$, such that 
\[T_1\subseteq B_1,~T_2\subseteq B_2\cap N(T_1),\cdots,T_{r-2}\subseteq B_{r-2}\cap N\left(\bigcup_{i=1}^{r-3} T_i\right).\]
\item  \textbf{Step 4.}   Let \[B_{r-1}=N\left(\bigcup_{i=1}^{r-2} T_i\right) \setminus \bigcup_{i=1}^{r-2} B_i, \quad \quad A=V(G)\setminus \left(\bigcup_{i=1}^{r-1} B_i\right).\]
\end{itemize}

We first prove that this process can be executed. To demonstrate this, it suffices to show that we can select $T_1,\cdots,T_{r-2}$ in Step 3. To this end, we need to bound the size of each $B_i$.
\begin{claim}\label{cl:3.3}
For each $i\in [r-2]$, $\big(\frac{2}{2r-3}-2\sqrt{\beta}\big)n< |B_i|\leq \big(\frac{2}{2r-3}+2\beta\big)n.$
\end{claim}
\begin{pf}    
Since $Z_i$ is an independent set with $|Z_i|\le(\frac{2}{2r-3}+2\beta)k$, we have
\begin{align*}
|Z_i|\ge|I|-(r-3)\Big(\frac{2}{2r-3}+2\beta\Big)k>\Big(\frac{2}{2r-3}-4r\beta\Big)k, 
\end{align*}
which concludes that $(\frac{2}{2r-3}-\sqrt{\beta})n\le|V_{Z_i}|\le(\frac{2}{2r-3}+2\beta)n$ where the lower bound follows from $\beta<\frac{1}{25r^2}$. Moreover, by \cref{prop:16} we have $e(G[V_{Z_i}])<\big(2\varepsilon+d/2\big)n^2<\beta n^2$. Hence, 
\begin{align*}
\Big(\frac{2}{2r-3}-2\sqrt{\beta}\Big)n< |V_{Z_i}|-\frac{2e(G[V_{Z_i}])}{2\sqrt{\beta}n}\le|B_i|\leq|V_{Z_i}|\leq \Big(\frac{2}{2r-3}+2\beta\Big)n,
\end{align*}
as claimed.
\end{pf}

The next claim shows that each vertex in $B_i$ has only a few non-neighbors outside $B_i$.
\begin{claim}\label{claim:few-non-neighbors-Kr}
For every $i\in [r-2]$ and $v\in B_i$, $v$ has at most $5\sqrt{\beta}n$ non-neighbors outside $B_i$, that is, $|(V(G)\setminus B_i)\setminus N(v)|\le 5\sqrt{\beta}n$.
\end{claim}

\begin{pf}
As $\delta(G)\ge\big(\frac{2r-5}{2r-3}-\beta\big)n$, $v$ is non-adjacent to at most $\big(\frac{2}{2r-3}+\beta\big)n$ vertices in $V(G)$. 
Moreover, by the definition of $B_i$, we have $|N(v)\cap B_i|\le|N(v)\cap V_{Z_i}|<2\sqrt{\beta}n$. Thus, by \cref{cl:3.3}, $v$ is non-adjacent to at least $|B_i|-2\sqrt{\beta}n\ge\big(\frac{2}{2r-3}-4\sqrt{\beta}\big)n$ vertices in $B_i$. It follows that $v$ is non-adjacent to at most $(4\sqrt{\beta}+\beta)<5\sqrt{\beta}n$ vertices in $V(G)\setminus B_i$.
\end{pf}

Now, we can show that the process in Step 3 is well-defined. Assume that we have chosen $T_1,\ldots, T_{i-1}$. Then, by \cref{cl:3.3} and \cref{claim:few-non-neighbors-Kr}, we obtain that 
\begin{align*}
\left|B_i\cap N\left(\bigcup_{j=1}^{i-1} T_j\right)\right|
\ge |B_i|-5(i-1)|H|\sqrt{\beta}n
 \ge\Big(\frac{2}{2r-3}-2\sqrt{\beta}\Big)n-5(r-3)|H|\sqrt{\beta}n>|H|.
\end{align*}
Thus we can choose the desired $T_{i}$ in $B_i$ for each $i\in[r-2]$ in Step 3.

\smallskip

Next we show that the partition $V(G)=A\cup B_1\cup\ldots\cup B_{r-1}$ satisfies the properties listed in \cref{thm:lambda-stability-precise-version}. Let $B=B_1\cup \cdots \cup B_{r-2}$. By \cref{cl:3.3}, we have $(\frac{2r-4}{2r-3}-2r\sqrt{\beta}) n\le|B|\le(\frac{2r-4}{2r-3}+2r\beta) n.$ Thus, it follows from~\cref{claim:few-non-neighbors-Kr} that
\begin{align*}
\Big(\frac{1}{2r-3}+2r\sqrt{\beta}\Big) n \ge n-|B|\ge |B_{r-1}|
&= \left|N\left(\bigcup_{i=1}^{r-2} T_i\right)\cap(V(G)\setminus B)\right|\\
&\ge|V(G)\setminus B|-(r-2)|H|5\sqrt{\beta}n\ge \Big(\frac{1}{2r-3}-5r|H|\sqrt{\beta}\Big)n.
\end{align*}
Finally, since $|A|=n-|B|-|B_{r-1}|\le 7r|H|\sqrt{\beta}n$,  we complete the proof of \cref{thm:lambda-stability-precise-version}(i).

\vskip 0.25em

Now, we prove \cref{thm:lambda-stability-precise-version}(ii). By our construction, it is clear that $G[T_1,\ldots,T_{r-2},B_{r-1}]$ is a complete $(r-1)$-partite graph and  $G[T_1,\ldots,T_{r-2}]=K_{r-2}[|H|]$. Hence, $B_{r-1}$ is $F$-free for every $F\in \mathcal{M}(H)$; otherwise, $G$ contains a copy of $F\vee K_{r-2}[|H|]\supseteq H$, a contradiction.  
Suppose that $G[B_1]$ contains a copy of $F\in \mathcal{M}(H)$. Applying the same process with $T_1$ containing the vertex set of this copy of $F$, it follows that $N(T_1)$ also contains a copy of $K_{r-2}[|H|]$, which is again a contradiction. Similarly, one can show that $G[B_i]$ is $F$-free for each $i\in[r-2]$, as desired.

\vskip 0.25em

We now proceed to prove \cref{thm:lambda-stability-precise-version} (iii).  
By \cref{fact:forest-free},  $\chi (G[B\cup B_{r-1}])\le (r-1)|F|\le (r-1)|H|$. 
Thus, $\chi(G[A])\ge C-(r-1)|H|$. Moreover, noting that $\delta(G)>(\frac{2r-5}{2r-3}-\beta)n>(\frac{r-3}{r-2}+\frac{1}{r^3}) n$, it follows from \cref{thm:fractional-r} (applied to $G$) that $\chi_f(G[A])\le \chi_f(G)\le C'=C'(H)$, which verifies \cref{thm:lambda-stability-precise-version}(iii).

\vskip 0.25em
Finally, we prove \eqref{eq:poker}.
Since for every $i\in [r-2]$ and $v\in B_i$, $v$ is non-adjacent to at most $5\sqrt{\beta}n$ vertices outside $B_i$, the total number of missing edges among all distinct pairs of $B_i,B_j$ is at most $\frac{5\sqrt{\beta}n^2}{2}<3\sqrt{\beta}n^2$. Moreover, since for every forest $F\in\mathcal{M}(H)$ and every $i\in[r-1]$, $G[B_i]$ is $F$-free, by \cref{fact:4}, the number of edges within the $B_i$'s is at most $(r-1)|H|n<\sqrt{\beta}n^2.$ Hence,
\begin{align*}
&\Big|E(G)\triangle E\Big(K_{r-1}\Big(\frac{n}{2r-3},\frac{2n}{2r-3},\ldots,\frac{2n}{2r-3}\Big)\Big)\Big|\\
\le& |A|n+\sqrt{\beta}n^2+3\sqrt{\beta}n^2+n\cdot\Big(\sum_{i=1}^{r-2}\Big||B_i|-\frac{2n}{2r-3}\Big|+\Big||B_{r-1}|-\frac{n}{2r-3}\Big|\Big)
\le 20r|H|\sqrt{\beta} n^2,
\end{align*}
as needed.
\end{proof}

\section{Stability for $\delta_\chi(K_r)$}\label{sec:Kr-stability}
In this section we prove the following theorem, which implies \cref{thm:Kr}. 

\begin{theorem}\label{thm:Kr-stability-precise}
For every $r \geq 3$ and $0<\beta<40^{-2}r^{-4}$, there exists $C=C(r, \beta)$  such that the following hold. 
For every $n$-vertex $K_r$-free graph $G$ with $\delta(G)\ge\big(\frac{2r-5}{2r-3}-\beta\big)n$ and $\chi(G)\ge C$, $G$ admits a vertex partition $V(G)=$ $A^* \cup B_1^* \cup \cdots \cup B_{r-1}^*$ with the following properties:

\begin{itemize}
\item [{\rm (i)}] $|A^*|\le 7r^2\sqrt{\beta}n,|B_i^*|=\big(\frac{2}{2 r-3} \pm 2\sqrt{\beta})\big) n$ for each $i \in[r-2]$, and $|B_{r-1}^*|=\big(\frac{1}{2 r-3} \pm 5r^2\sqrt{\beta}\big) n$;
\item [{\rm (ii)}]  for each $i \in[r-1], B_i^*$ is an independent set;
\item [{\rm (iii)}]  $\chi(G[A^*]) \geq C-r+1$ and $\chi_f(G[A^*]) \leq 2+50r^3\sqrt{\beta}$; in particular, $G[A^*]$ is homomorphic to $\mathrm{KN}(m, (\frac{1}{2}-\varepsilon)m)$, where $m=\ln n/\beta$ and $\varepsilon=100r^3\sqrt{\beta}$.
\end{itemize} 
\end{theorem}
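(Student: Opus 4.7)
The plan is to invoke \cref{thm:lambda-stability-precise-version} as a black box and then strengthen its conclusions for the clique case by exploiting $K_r$-freeness more decisively.

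\emph{Setup and the first two bullets.} I would apply \cref{thm:lambda-stability-precise-version} with $H = K_r$ to obtain a partition $V(G) = A \cup B_1 \cup \cdots \cup B_{r-1}$ satisfying the size constraints. Since the decomposition family of $K_r$ is $\{K_2\}$ — deleting $r-2$ color classes from the unique proper $r$-coloring of $K_r$ always leaves a single edge — part~(ii) of that theorem says each $B_i$ is $K_2$-free, i.e.\ an independent set, which is exactly the second bullet of \cref{thm:Kr-stability-precise}. Setting $A^* := A$ and $B^*_i := B_i$, the size bounds transfer directly (using $|H|=r$). Moreover $V(G)\setminus A^*$ is properly $(r-1)$-colored by the $B^*_i$, so $\chi(G[A^*]) \ge \chi(G) - (r-1) \ge C - r + 1$.

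\emph{Neighborhood analysis on $A^*$ and the fractional chromatic bound.} For the remainder of bullet~(iii), I would first pin down the structure of each $v \in A^*$ inside the $B^*_i$. Using the degree hypothesis, the near-complete multipartite structure of $G[B^*_1,\ldots,B^*_{r-1}]$, and the $K_{r-1}$-extension argument of \cref{lmm:lmm9}(b) combined with the counting lemma \cref{thm:counting}, one shows that after discarding an $o(\sqrt\beta n)$ subset of $A^*$ every remaining $v$ is adjacent to nearly all of $B^*_1 \cup \cdots \cup B^*_{r-3}$, almost disjoint from $B^*_{r-1}$, and has $|N(v) \cap B^*_{r-2}| = (1/2 \pm O(\sqrt\beta))|B^*_{r-2}|$. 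The crucial consequence of $K_r$-freeness is that for every edge $uv \in E(G[A^*])$,
\[
|N(u) \cap N(v) \cap B^*_{r-2}| \le \varepsilon_0 n \qquad \text{with } \varepsilon_0 = O(\sqrt\beta),
\]
for otherwise $u$, $v$, a common $w \in B^*_{r-2}$, together with one common neighbor in each of $B^*_1,\ldots,B^*_{r-3}$, would form a $K_r$. Using this, for each $w \in B^*_{r-2}$ set $T_w := \{v \in A^* : w \in N(v)\}$; a double-counting over edges and $w$ shows that $T_w$ is independent for all but $o(|B^*_{r-2}|)$ choices of $w$, and a small local correction yields a family of honest independent sets whose uniform average covers every $v$ with weight at least $1/2 - O(\sqrt\beta)$. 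This gives $\chi_f(G[A^*]) \le 2 + O(\sqrt\beta)$.

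\emph{Kneser homomorphism via random projection.} The most novel ingredient is the homomorphism to $\mathrm{KN}(m,(1/2-\varepsilon)m)$ with the near-optimal $m = \ln n/\beta$. I would partition $B^*_{r-2}$ uniformly at random into $m$ blocks $P_1,\ldots,P_m$ of equal size $\approx \beta n/\ln n$, and for each $v \in A^*$ define
\[
\phi(v) := \{ j \in [m] : |N(v) \cap P_j|/|P_j| \ge 1/2 - O(\sqrt\beta) \}.
\]
A Chernoff bound on $|N(v) \cap P_j|$ together with a union bound over the $O(n^2)$ ordered pairs in $A^* \times A^*$ and the $m$ choices of $j$ shows that, with positive probability, $|\phi(v)| \ge s := (1/2-\varepsilon)m$ for every $v \in A^*$ while for every edge $uv \in E(G[A^*])$ the intersection $\phi(u) \cap \phi(v)$ is empty: any common $j$ would contribute at least one vertex to $N(u) \cap N(v) \cap P_j$ and summing across $j$ would violate the $\varepsilon_0 n$ bound above. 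Truncating each $\phi(v)$ down to an exact $s$-subset delivers the required homomorphism to $\mathrm{KN}(m,s)$. The choice $m = \ln n/\beta$ is forced: smaller $m$ would make each $|P_j|$ too small for Chernoff to beat the union bound over $\Theta(n^2)$ edge constraints.

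\emph{Main obstacles.} I expect the principal difficulties to be (a) the canonical-coordinate issue — ensuring that the Kneser structure for \emph{every} $v \in A^*$ is located in the same distinguished part $B^*_{r-2}$ rather than in $v$-dependent parts, which requires a global majority argument keyed on the asymmetric sizes $|B^*_{r-1}| \approx \tfrac{1}{2}|B^*_i|$; and (b) tuning the Chernoff deviation and the threshold defining $\phi(v)$ so that the disjointness on edges survives both the randomness and the subsequent truncation, a tight balance dictated by the target constants $\varepsilon = 100r^3\sqrt\beta$ and $m = \ln n/\beta$.
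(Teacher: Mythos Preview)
Your opening setup is fine: \cref{thm:lambda-stability-precise-version}(ii) with $\mathcal{M}(K_r)=\{K_2\}$ does make each $B_i$ independent, and with $|H|=r$ the size bounds transfer, so bullets one and two hold with $A^*:=A$, $B^*_i:=B_i$. The gaps are entirely in bullet~(iii).

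First, you assert that (after discarding a small set) every $v\in A^*$ has its half-neighborhood in the single part $B^*_{r-2}$. This is not justified: what $K_r$-freeness actually yields is that each $v\in A$ has a unique $i_v\in[r-1]$ with $|N(v)\cap B_{i_v}|$ small, and $i_v$ varies with $v$. The paper does not discard such vertices but \emph{refines} the partition, setting $B^*_{i}:=B_i\cup\{v\in A:i_v=i\}$ for $i\in[r-2]$, keeping $A^*:=\{v\in A:i_v=r-1\}$, and reproving that each enlarged $B^*_i$ is still independent. Even after this refinement the Kneser coordinate is not global: the paper shows it is constant only \emph{per connected component} of $A^*$, via a $3$-path propagation argument, and different components may use different $i\in[r-2]$. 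Your proposed ``global majority argument keyed on the asymmetric sizes'' is neither what the paper does nor obviously workable.

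Second, and more seriously, your random-partition step cannot produce disjoint $\phi(u),\phi(v)$. With threshold $\tfrac12-O(\sqrt\beta)<\tfrac12$, both $u$ and $v$ can cross the threshold in a block $P_j$ while $N(u)\cap P_j$ and $N(v)\cap P_j$ are disjoint; your assertion that ``any common $j$ would contribute at least one vertex to $N(u)\cap N(v)\cap P_j$'' is simply false, and the weak bound $|N(u)\cap N(v)\cap B^*_{r-2}|\le\varepsilon_0 n$ cannot rescue it. The paper instead proves the sharp structural fact that for each edge $xy\in E(G[A^*])$ and the index $i$ attached to its component, one has $N(\{x,y\})\cap B^*_i=\varnothing$ \emph{exactly}. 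This immediately gives a homomorphism $G[A^*]\to\mathrm{KN}(|B^*_i|,b)$ with $b\approx |B^*_i|/2$; only then is the dimension reduced to $m=\ln n/\beta$ by randomly \emph{sampling} an $m$-subset $S\subseteq B^*_i$ and applying hypergeometric concentration (\cref{lem:hom}), where the union bound is over the $n$ vertices, not over $O(n^2)$ edge constraints.
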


\begin{proof}
Let $C$ be $C(K_r, \beta)$ given from \cref{thm:lambda-stability-precise-version}. Suppose that $G$ is a $K_r$-free graph with $\delta(G)\ge (\frac{2r-5}{2r-3}-\beta)n$ and $\chi(G) \geq C$. Applying \cref{thm:lambda-stability-precise-version}, let $V(G)=A\cup B_1\cup \ldots\cup B_{r-1}$ be a partition satisfying the conclusion of \cref{thm:lambda-stability-precise-version}. 

We first note that $G[B_1,\ldots,B_{r-1}]$ is rich in $K_{r-1}$.

\begin{claim}\label{fact:K_{r-1}}
Let $T_1\subseteq B_1,\cdots,T_{r-1}\subseteq B_{r-1}$ be subsets with $|T_i|\ge 5r\sqrt{\beta}n$ for each $i\in [r-1]$.  Then $K_{r-1}\subseteq G[T_1,\ldots,T_{r-1}]$.
\end{claim}

\begin{pf}
We can greedily pick $w_1\in T_1, w_2\in N(w_1)\cap T_2,\ldots, w_{r-1}\in N(\{w_1,\ldots,w_{r-2}\})\cap T_{r-1}$,
so that $\{w_1,\ldots,w_{r-1}\}$ forms a copy of $K_{r-1}$. 
To see this, for every $i\in [r-2]$,  by \cref{thm:lambda-stability-precise-version}(i), we have $|T_{i+1}\setminus N(w_j)|\le 5\sqrt{\beta}n$ for each $j\in [i]$. Hence
\begin{align*}
|N(\{w_1,\ldots,w_{i}\})\cap T_{i+1}|\ge|T_{i+1}|-\sum_{j=1}^{i}|T_{i+1}\setminus N(w_j)|\ge 5r\sqrt{\beta}n-5i\sqrt{\beta}n>0.
\end{align*}
Therefore, the vertices $w_1,\ldots,w_{r-1}$ are well-defined.
\end{pf}
In order to obtain the desired sets $B^*_i$ in \cref{thm:Kr-stability-precise}, we need a refined analysis on $G[A]$.

\begin{claim}\label{cl:26}
For every vertex $x\in A$, there exists exactly one $i\in [r-1]$ such that $|N(x)\cap B_i|< 5r\sqrt{\beta}n$.
\end{claim}

\begin{pf}
  Fix an arbitrary $x\in A$. 
  Since $|N(x)|\ge\big(\frac{2r-5}{2r-3}-\beta\big)n$, $|B_{r-1}|\ge (\frac{1}{2r-3}-5r^2\sqrt{\beta}) n$ and $|B_i|>(\frac{2}{2r-3}-2\sqrt{\beta})n$ for $i\in[r-2]$, there is at most one $i\in [r-1]$ such that $|N(x)\cap B_i|< 5r\sqrt{\beta}n$. 
  On the other hand, if $|N(x)\cap B_i|\geq 5r\sqrt{\beta}n$ for every $i\in[r-1]$, 
  then applying \cref{fact:K_{r-1}} with $T_i:=N(x)\cap B_i$, it follows that $K_{r-1}\subseteq G[T_1,\ldots,T_{r-1}]\subseteq G[N(x)]$, which implies that $K_r\subseteq G$,  a contradiction.
\end{pf}

  For each $i\in[r-1]$, let 
  $$R_i=\{x\in A:|N(x)\cap B_i|< 5r\sqrt{\beta}n\}.$$ 
  Then by~\cref{cl:26}, $R_1,\ldots, R_{r-1}$ are pairwise disjoint and form a partition of $A$.
  Set 
  $$B^*_{r-1}:=B_{r-1} \quad \text{ $B^*_i:=B_i\cup R_i$ for $i\in[r-2]$, and $A^*=V(G)\setminus \left(\bigcup_{i=1}^{r-1} B^*_i\right)=R_{r-1}$.}$$ 
  Next we demonstrate that these $B^*_i$'s satisfy the conclusions (i)-(iii) of \cref{thm:Kr-stability-precise}. 
  
  We first prove part (ii). Notice first that as $\mathcal{M}(K_r)=\{K_2\}$, it follows immediately from \cref{thm:lambda-stability-precise-version}(ii) that each $B_i$, $i\in[r-1]$, is an independent set. 
  As $B^*_{r-1}:=B_{r-1}$ and $B^*_i$, $i\in[r-2]$, are symmetric, it suffices to show that $B^*_{1}$ is independent.
  By the definition of $R_1$ and that $B_1$ is independent, every vertex $v\in B^*_1$ satisfies 
  $$|N(v)\cap B^*_1|\le |R_1|+|N(v)\cap B_1|\le |A|+5r\sqrt{\beta}n\le 10r^2\sqrt{\beta}n.$$ 
  As $A\cup B_1\cup \dots \cup B_{r-1}$ satisfies \cref{thm:lambda-stability-precise-version}(i) and $|B^*_1|>(\frac{2}{2 r-3}-2 \sqrt{\beta}) n$, $v$ is non-adjacent to at least $(\frac{2}{2 r-3}-12r^2 \sqrt{\beta}) n$ vertices in $B^*_1$. Moreover, since $v$ is non-adjacent to at most $(\frac{2}{2 r-3}+\beta) n$ vertices in $V(G)$, we see that $v$ is non-adjacent to at most $13r^2 \sqrt{\beta} n$ vertices in $V(G) \backslash B^*_1$. 
  To show that $B^*_{1}$ is independent, suppose to the contrary that $xy\in E(G)$ for some distinct $x,y\in B^*_1$.
  Let $T_j:=N(x)\cap N(y)\cap B^*_j$ for $j\neq 1$. Thus, we have $|T_j|\ge |B^*_j|-26r^2\sqrt{\beta}n\ge5r\sqrt{\beta}n$ for each $j\neq 1$. Applying \cref{fact:K_{r-1}} with $T_j$ for $2\le j\le r-1$ gives that $K_{r-2}\subseteq G[T_2,\ldots,T_{r-1}]\subseteq G[N(x)\cap N(y)]$, a contradiction. This proves part (ii).
  
Now, part (i) is obvious since $|A^*|\le |A|\le 7r^2\sqrt{\beta}n$, $|B^*_{r-1}|=|B_{r-1}|$ and by part (ii), for $i\in[r-2]$, we have $|B_i|\le|B^*_i|\le n-\delta(G)\le \left(\frac{2}{2r-3}+\beta\right)n$ .

\vskip 0.25em
Finally, we  prove part (iii).  It is clear that $\chi(G-A^*)=r-1$ and $\chi(G[A^*])\ge C-r+1$.
\begin{claim}\label{lem:A-structure}
For any edge $xy\in E(G[A^*])$, there exists a unique $i\in [r-2]$ such that $N(\{x,y\})\cap B_i^*=\varnothing$.
\end{claim}

\begin{pf}
Let $B^*:=\bigcup_{i=1}^{r-2} B^*_i$, then $|B^*|\le n-|B^*_{r-1}|\le(\frac{2r-4}{2r-3}+5r^2\sqrt{\beta})n.$ For each $v\in A^*=R_{r-1}$, we have 
\begin{align*}
|N(v)\cap B^*|\ge \Big(\frac{2r-5}{2r-3}-\beta\Big)n-|A^*|-|N(v)\cap B^*_{r-1}|\ge \Big(\frac{2r-5}{2r-3}-13r^2\sqrt{\beta}\Big)n,
\end{align*}
Fix an edge $xy\in E(G[A^*])$, we then get 
\begin{align}\label{eq:N(xy)-cap-B}
|N(\{x,y\})\cap B^*|\ge|N(x)\cap B^*|+|N(y)\cap B^*|-|B^*|\ge\Big(\frac{2r-6}{2r-3}-31r^2\sqrt{\beta}\Big)n.
\end{align}

Notice that there is a unique $i\in [r-2]$ such that $|N(\{x,y\})\cap B^*_i|<5r\sqrt{\beta}n$. Indeed, if for each $i\in [r-2]$, $|N(\{x,y\})\cap B^*_i|\ge 5r\sqrt{\beta}n$, then we can apply~\cref{fact:K_{r-1}} with $T_j=N(\{x,y\})\cap B^*_j$ for $j\in [r-2]$ to obtain a copy of $K_{r-2}$ in $G[N(x)\cap N(y)]$, a contradiction. But if there are at least two such indices, then as $|B_j^*|\ge (\frac{2}{2r-3}-2\sqrt{\beta})n$, $x$ and $y$ would have too few common neighbors in $B^*$, contradicting~\eqref{eq:N(xy)-cap-B}. 

We shall show that this unique index $i\in [r-2]$ is the desired one. Suppose for contradiction that there exists a vertex $w\in B^*_i$ such that $w\in N(\{x,y\})$. 
Since for each $j\in[r-2]\setminus\{i\}$, $|B^*_j|\le |B_j|+|A|\le \big(\frac{2}{2r-3}+9r^2\sqrt{\beta}\big)n$, we have
\begin{align*}
|N(\{x,y\})\cap B^*_j|\ge|N(\{x,y\})\cap B^*|-|N(\{x,y\})\cap B^*_i|-\sum_{\ell\in[r-2]\setminus\{i,j\}}|B^*_\ell|\ge\Big(\frac{2}{2r-3}-45r^3\sqrt{\beta}\Big)n.
\end{align*}
Since $w\in B_i^*$,  by \cref{thm:lambda-stability-precise-version} (i), for each $j\in[r-2]\setminus\{i\}$, $|N(w)\cap B^*_j|\ge|B^*_j|-5\sqrt{\beta}n\ge\big(\frac{2}{2r-3}-7\sqrt{\beta}\big)n.$
Therefore, $|N(\{x,y,w\})\cap B^*_j|\ge 5r\sqrt{\beta}n$ for each $j\in[r-2]\setminus\{i\}$, and by \cref{fact:K_{r-1}} we have $K_{r-3}\subseteq G[N(\{x,y,w\})]$, a contradiction.
\end{pf}

Let $a:=(\frac{2}{2r-3}+\beta)n$ and $b:=(\frac{1}{2r-3}-12r^2\sqrt{\beta})n$. We aim to show that $G[A^*]\xrightarrow{\textup{hom}}\mathrm{KN}(a,b)$. It suffices to construct a map $\phi:A'\rightarrow\binom{[a]}{b}$ for each  connected component $A'\subseteq A^*$ such that if $uv\in E(G[A'])$, then $\phi(u)\cap \phi(v)=\varnothing$. Fix a connected component $A'$ in $A^*$. We next observe that each vertex in $A'$ is adjacent to at least almost half of each $B_j^*$, $j\in[r-2]$.

\begin{claim}\label{cl:A-deg}
    For any $v\in A'$ and any $j\in [r-2]$, $|N(v)\cap B^*_j|
      \ge \big(\frac{1}{2r-3}-12r^2\sqrt{\beta}\big)n.$
\end{claim}
\begin{pf}
     Note that 
  \begin{align*}
      |N(v)\cap B^*_j|\ge |N(v)|-|A^*|-|N(v)\cap B^*_{r-1}|-\sum_{\ell\in[r-2]\setminus\{j\}} |B^*_{\ell}|.
  \end{align*}
Recall that $B^*_{r-1}=B_{r-1}$ and $A^*=R_{r-1}$, and so $|N(v)\cap B^*_{r-1}|\le 5r\sqrt{\beta}n$.
Since $B^*_{\ell}$ is an independent set, $|B^*_{\ell}|\le n-\delta(G)\le a$.
Therefore,
    \begin{align*}
      |N(v)\cap B^*_j|
      \ge\Big(\frac{2r-5}{2r-3}-\beta\Big)n-5r^2\sqrt{\beta}n-5r\sqrt{\beta}n-\Big(\frac{2r-6}{2r-3}+r\beta\Big)n
      \ge\Big(\frac{1}{2r-3}-12r^2\sqrt{\beta}\Big)n,
  \end{align*}
  as claimed.
\end{pf}

We need to strengthen~\cref{lem:A-structure} as follows.

\begin{claim}\label{cl:AA-structure}
For every connected component $A'\subseteq A^*$, there exists a unique index $i_{A'}\in [r-2]$ such that for every edge $xy\in E(G[A'])$, $N(\{x,y\})\cap B_{i_{A'}}^*=\varnothing$.
\end{claim}
\begin{pf}
    Fix a component $A'$ in $A^*$, we are done by~\cref{lem:A-structure} if $A'$ is a single edge. We may then assume that there is a 3-vertex path $xyz$ in $A'$. Let $i_{xy}$ and $i_{yz}$ be the indices obtained from~\cref{lem:A-structure} such that 
    $N(\{x,y\})\cap B_{i_{xy}}^*=\varnothing$ and $N(\{y,z\})\cap B_{i_{yz}}^*=\varnothing$. It suffices to prove that $i_{xy}=i_{yz}=i_{A'}$ as then we can take a spanning tree of $A'$ and propagate the index $i_{A'}$.
    
    Suppose for a contradiction that $i_{xy}\neq i_{yz}$. As $N(\{x,y\})\cap B_{i_{xy}}^*=\varnothing$, we see that $y$ has at least $|N(x)\cap B^*_{i_{xy}}|
      \ge \big(\frac{1}{2r-3}-12r^2\sqrt{\beta}\big)n$ non-neighbors in $B^*_{i_{xy}}$ by \cref{cl:A-deg}. Similarly, $y$ has at least $|N(z)\cap B^*_{i_{yz}}|
      \ge \big(\frac{1}{2r-3}-12r^2\sqrt{\beta}\big)n$ non-neighbors in $B^*_{i_{yz}}$. On the other hand, $y\in A^*=R_{r-1}$, and so $|N(y)\cap B^*_{r-1}|\le 5r\sqrt{\beta}n$. Altogether, $y$ has at least $\big(\frac{3}{2r-3}-50r^2\sqrt{\beta}\big)n$ non-neighbors, contradicting the minimum degree condition of $G$. 
\end{pf}
   
Fix the unique index $i_{A'}\in[r-2]$ for $A'$ guaranteed by~\cref{cl:AA-structure}. For every $v\in A'$, we can define $\phi(v)$ to be an arbitrary $b$-subset in $N(v)\cap B^*_{i_{A'}}$.
Therefore, it follows from \cref{cl:AA-structure}  that for every edge $uv\in E(G[A'])$, $\phi(u)\cap\phi(v)=\varnothing$ as desired, hence $\phi$ is a $b$-fold coloring on $B_{i_{A'}}^*$. 

Notice that $|B_j^*|\le a$ for any $j\in [r-2]$, so for every connected component $A'\subseteq A^*$, we have $\chi_f(G[A'])\le\frac{a}{b}$. Consequently, for sufficiently large $n$, $\chi_f(G[A^*])\le\frac{a}{b}\le 2+50r^3\sqrt{\beta}$, which proves the first half of part (iii) and implies that $G[A^*]\xrightarrow{\textup{hom}} \mathrm{KN}(a,b)$. The second half follows from the lemma below. Indeed, applying~\cref{lem:hom} with $\delta=\sqrt{\beta}$ and $m=\ln n/\beta$, we have $\frac{b}{a}-\delta\ge \frac{1}{2}-\varepsilon$ and so that $G[A^*]\xrightarrow{\textup{hom}} \mathrm{KN}(m,(1/2-\varepsilon)m)$ as desired.

This completes the proof of \cref{thm:Kr-stability-precise}.
\end{proof}

\begin{lemma}\label{lem:hom}
    Let $G$ be a graph on $n$ vertices. If $G\xrightarrow{\textup{hom}} \mathrm{KN}(a,b)$, then for every $\delta\in(0,b/a)$ and $m>\ln n/(2\delta^2)$ we have $G\xrightarrow{\textup{hom}} \mathrm{KN}(m,(b/a-\delta)m)$.
\end{lemma}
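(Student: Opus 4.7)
The plan is to use a random projection argument in the spirit of the Johnson--Lindenstrauss lemma. Fix a homomorphism $\phi\colon V(G)\to \binom{[a]}{b}$, so that $\phi(u)\cap\phi(v)=\varnothing$ whenever $uv\in E(G)$. Our goal is to compose $\phi$ with a random map that sends the ground set $[a]$ down to the much smaller set $[m]$, while preserving disjointness and (with high probability) the correct image size.

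Concretely, I would sample independent uniform elements $x_1,\dots,x_m\in[a]$, and for each vertex $v\in V(G)$ define
\[
\psi(v)\;=\;\{\,i\in[m]\;:\;x_i\in\phi(v)\,\}\;\subseteq\;[m].
\]
Two observations drive the proof. First, disjointness is automatic and deterministic: if $uv\in E(G)$ then $\phi(u)\cap\phi(v)=\varnothing$, hence no index $i$ can witness membership in both $\phi(u)$ and $\phi(v)$, so $\psi(u)\cap\psi(v)=\varnothing$ for every choice of $x_1,\dots,x_m$. Second, for a fixed $v$, the random variable $|\psi(v)|$ is a sum of $m$ independent Bernoulli$(b/a)$ random variables, so $\mathbb{E}[|\psi(v)|]=(b/a)m$.

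The main quantitative step is a lower tail bound: by Hoeffding's inequality,
\[
\Pr\bigl[\,|\psi(v)|<(b/a-\delta)m\,\bigr]\;\le\;\exp(-2\delta^2 m).
\]
Taking a union bound over the $n$ vertices and using the hypothesis $m>\ln n/(2\delta^2)$ gives $n\exp(-2\delta^2 m)<1$, so with positive probability every $\psi(v)$ has size at least $(b/a-\delta)m$. Fix such a choice of $x_1,\dots,x_m$, and for each $v$ trim $\psi(v)$ arbitrarily down to a subset $\psi'(v)\in\binom{[m]}{\lceil(b/a-\delta)m\rceil}$; disjointness of the $\psi'(v)$ on edges is inherited from that of the $\psi(v)$. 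Hence $\psi'$ is the desired homomorphism $G\xrightarrow{\mathrm{hom}}\mathrm{KN}(m,(b/a-\delta)m)$.

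There is no real obstacle beyond choosing the right random experiment: once the projection $\psi(v)=\{i:x_i\in\phi(v)\}$ is written down, disjointness is a free, deterministic byproduct of $\phi$ being a homomorphism, and the only probabilistic content is a Hoeffding bound plus union bound. The minor bookkeeping point to be careful about is that $(b/a-\delta)m$ need not be an integer, which is handled by the trimming step above; and if $(b/a-\delta)m<0$ the statement is vacuous, so the restriction $\delta<b/a$ in the hypothesis is exactly what is needed.
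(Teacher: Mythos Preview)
Your proof is correct and follows essentially the same approach as the paper: random projection of the ground set, Hoeffding's tail bound on the image sizes, union bound over the $n$ vertices, then trim. The only cosmetic difference is that you sample $x_1,\dots,x_m\in[a]$ with replacement (so $|\psi(v)|$ is binomial), whereas the paper picks a uniform $m$-subset $S\subseteq[a]$ and sets $\psi(v)\subseteq\phi(v)\cap S$ (so $|\psi(v)|$ is hypergeometric); Hoeffding's bound $\Pr[X\le(b/a-\delta)m]\le e^{-2\delta^2 m}$ applies equally to both, and disjointness is inherited deterministically in either version.
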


To prove \cref{lem:hom}, we recall the hypergeometric distributions. Suppose that there is a finite set containing $n$ elements, exactly $k\le n$ of which are marked, and suppose that we sample $s\le n$ elements uniformly at random without replacement. Let $X$ be the random variable that counts the number of marked elements in the sample. Then we say that $X$ has the \emph{hypergeometric distribution} with parameters $(n,k,s)$. We will need the following standard tail bound on hypergeometric distributions.

\begin{lemma}[\cite{hoeffding1963probability}]\label{lem:prob}
Let $X$ have the hypergeometric distribution with parameters $(n,k,s)$. Then for $\delta\in(0,k/n)$ the following inequality holds
\begin{align*}
\Pr\left[X\le\Big(\frac{k}{n}-\delta\Big)s\right]\le e^{-2\delta^2 s}.
\end{align*}
\end{lemma}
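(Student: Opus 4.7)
The plan is to follow Hoeffding's classical exponential-moment approach. Write $X = \sum_{i=1}^{s} Y_i$, where $Y_i \in \{0,1\}$ is the indicator that the $i$-th element drawn (without replacement) is one of the $k$ marked elements. Setting $p = k/n$, each $Y_i$ has mean $p$, though the $Y_i$ are not independent. For any $t>0$, the Markov--Chernoff bound applied to $e^{-tX}$ gives
\[
\Pr\!\left[X \le (p-\delta)s\right] \;=\; \Pr\!\left[e^{-tX} \ge e^{-t(p-\delta)s}\right] \;\le\; e^{t(p-\delta)s}\, \mathbb{E}[e^{-tX}].
\]
The strategy is to bound the right-hand side, then optimize in $t$.

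The heart of the argument is Hoeffding's moment-generating-function comparison: for any convex function $\varphi$, $\mathbb{E}[\varphi(X)] \le \mathbb{E}[\varphi(Z)]$ where $Z \sim \mathrm{Binomial}(s,p)$ is the sum of $s$ i.i.d.\ $\mathrm{Bernoulli}(p)$'s. I would prove this by the standard symmetrization argument: represent $X$ as $\sum_{i=1}^s a_{\pi(i)}$ where $a_1,\dots,a_n \in \{0,1\}$ contain exactly $k$ ones and $\pi$ is a uniformly random permutation of $[n]$, and show that replacing any two drawn coordinates by an independent resampling only increases $\mathbb{E}[\varphi(X)]$. Since $x \mapsto e^{-tx}$ is convex, this yields $\mathbb{E}[e^{-tX}] \le \mathbb{E}[e^{-tZ}] = (1 - p + p e^{-t})^s$.

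To finish, apply Hoeffding's lemma to the i.i.d.\ $[0,1]$-valued summands of $Z$: for each Bernoulli factor, $\mathbb{E}[e^{-tY}] \le e^{-tp + t^2/8}$, so $\mathbb{E}[e^{-tZ}] \le e^{-tps + t^2 s/8}$. Plugging back gives
\[
\Pr\!\left[X \le (p-\delta)s\right] \;\le\; e^{t(p-\delta)s - tps + t^2 s /8} \;=\; e^{-t\delta s + t^2 s/8},
\]
and minimizing the exponent in $t > 0$ at $t = 4\delta$ yields the desired bound $e^{-2\delta^2 s}$. Note that $\delta \in (0, p)$ ensures the probability event is nontrivial, but plays no role beyond that in the Chernoff calculation.

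The main obstacle is the MGF-domination step, since the $Y_i$'s are genuinely dependent. One can finesse this either by the symmetrization/swap argument sketched above, by a direct comparison of the hypergeometric and binomial probability mass functions via log-concavity, or by invoking the fact (also due to Hoeffding) that sampling without replacement is ``more concentrated'' than sampling with replacement in convex order. Any of these routes feeds into the same Chernoff machinery; the remaining calculation is then purely mechanical and recovers the stated inequality.
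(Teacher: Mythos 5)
Your argument is correct and is precisely the one in Hoeffding's 1963 paper, which this lemma is cited to without proof: Chernoff's exponential method, the convex-ordering comparison between the hypergeometric $X$ and its binomial counterpart $Z\sim\mathrm{Binomial}(s,k/n)$ (Hoeffding's Theorem 4), then the single-variable Hoeffding bound $\mathbb{E}[e^{-tY}]\le e^{-tp+t^2/8}$ applied to each Bernoulli factor of $Z$, and finally optimization at $t=4\delta$ to get $e^{-2\delta^2 s}$. The arithmetic checks out, and the observation that $\delta\in(0,k/n)$ only ensures the event is nontrivial is accurate.
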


\begin{proof}[Proof of Lemma \ref{lem:hom}]
Let $\phi:V(G)\rightarrow \binom{[a]}{b}$ be a homomorphism from $G$ to $\mathrm{KN}(a,b)$. Let $S$ be a uniformly chosen random $m$-subset of $[a]$. For each vertex $v\in V(G)$, define a random variable $X_v=|\phi(v)\cap S|$. Clearly, $X_v$ follows the hypergeometric distribution with parameters $(a,b,m)$. Then, by \cref{lem:prob}, we have that for every $v\in V(G)$,
\begin{align*}
    \Pr[X_v\le (b/a-\delta)m]\le e^{-2\delta^2 m}<1/n. 
\end{align*}
By the union bound, there exists a choice of $S\in\binom{[a]}{m}$ such that for all $v\in V(G)$,
\begin{align*}
    (b/a-\delta)m<X_v=|\phi(v)\cap S|\le |S|=m.
\end{align*}
For each $v\in V(G)$, define $\psi(v)$ to be an arbitrary $((b/a-\delta)m)$-subset of $\phi(v)\cap S$. By the previous discussion, $\psi(v)$ is well-defined. Moreover, for every $uv\in E(G)$, we have 
\begin{align*}
    \psi(u)\cap\psi(v)\subseteq(\phi(v)\cap S)\cap(\phi(v)\cap S)\subseteq\phi(v)\cap\phi(u)=\varnothing,
\end{align*}
where the last equality holds by the property of $\phi$. Thus, $\psi:V(G)\rightarrow\binom{S}{(b/a-\delta)m}$ is indeed a homomorphism from $G$ to $\mathrm{KN}(m,(b/a-\delta)m)$, as needed.
\end{proof}

\section{Stability for \texorpdfstring{$\delta_{\chi}(H)=\frac{r-3}{r-2}$}{(r-3)/(r-2)}}\label{sesc:theta-stability}

In this section, we prove the following theorem. 
\begin{theorem}\label{thm:theta-stability-precise-version} 
Let $r\ge 4$, $0<\beta<\frac{1}{30r^3}$ and $H$ be a graph with $\chi(H)=r$ and $\delta_\chi(H)=\frac{r-3}{r-2}$. Then there exists $C=C(H,\beta)$ such that the following holds.
For every $n$-vertex $H$-free graph $G$ with $\delta(G)\ge\big(\frac{r-3}{r-2}-\beta\big)n$ and $\chi(G)\ge C$ there exists
an induced subgraph $G'\subseteq G$ on $n'=\frac{r-3}{r-2}n$ vertices such that
\begin{equation}\label{eq:Gprime}
    |E(G') \Delta E(T_{n', r-3})| \leq 30r^3\beta n'^2.
\end{equation}
\end{theorem}

Note that~\cref{thm:theta-stability-precise-version} implies~\cref{thm:theta-stability}. Indeed, let $S=V(G)\setminus V(G')$, then $|S|=\frac{n}{r-2}$, and we can derive from~\eqref{eq:Gprime} and the min-degree condition $\delta(G)\ge\big(\frac{r-3}{r-2}-\beta\big)n$ that every vertex in $G'$ is almost complete to $S$. Hence, letting $\hat{G}$ be the spanning subgraph of $G$ obtained by deleting all edges induced by $S$, we see that $\hat{G}$ is in edit-distance $o(n^2)$-close to $T_{n,r-2}$ as desired.

We begin by stating an embedding lemma, which is a consequence of Lemmas 24, 25, and Propositions 26, 36 in \cite{allen2013chromatic}. A formal proof is included in the appendix for completeness.

\begin{lemma}[\cite{allen2013chromatic}]\label{lmm:lmm5.3}
Let $H$ be an $r$-near-acyclic graph, and let $\varepsilon, \beta, d > 0$ be real numbers. Suppose that $G$ is a graph and that $X, Y, Z_1, \ldots, Z_{r-3}$ are pairwise disjoint subsets of $V(G)$ such that $|Y| = |Z_j|$ for each $j \in [r-3]$. Assume that the pairs $(Y, Z_j)$ and $(Z_i, Z_j)$ are $(\varepsilon, d)$-regular for all distinct $i, j$, and
\begin{align*}
|N(x) \cap Y| \geq \beta |Y| \quad \text{and} \quad |N(x) \cap Z_j| \geq (1/2 + \beta) |Z_j|
\end{align*}
for every $x \in X$ and every $j \in [r-3]$. If $\chi(G[X]) \geq C$ for some sufficiently large constant $C = C(H, \varepsilon, \beta, d)$, then $H \subseteq G$.
\end{lemma}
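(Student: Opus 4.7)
The plan is to apply the minimum-degree regularity lemma, use Lemma~\ref{lmm:lmm5.3} as a structural obstruction on the reduced graph $R$, and extract a Turán-close structure via the Andrásfai--Erdős--Sós theorem and Erdős--Simonovits stability. First, I will apply Theorem~\ref{thm:RL} with parameters $\varepsilon\ll d\ll \beta$ to obtain an $(\varepsilon,d)$-reduced graph $R$ on $k$ vertices with $\delta(R)\ge(\tfrac{r-3}{r-2}-2\beta)k$. For each $x\in V(G)\setminus V_0$, write $d_j(x)=|N(x)\cap V_j|/|V_j|$ and record the linear-density pattern $I(x)=\{j:d_j(x)\ge d\}$ and the majority pattern $J(x)=\{j:d_j(x)\ge 1/2+\beta\}$, giving a partition of $V(G)\setminus V_0$ into at most $4^k$ classes $X_{I,J}=\{x:I(x)=I,\ J(x)=J\}$. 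Choosing $C$ so that $C/4^k$ exceeds the threshold $C_0(H,\beta)$ of Lemma~\ref{lmm:lmm5.3}, pigeonhole produces some $X^*:=X_{I^*,J^*}$ with $\chi(G[X^*])\ge C_0$. A degree-counting argument using $\delta(G)\ge(\tfrac{r-3}{r-2}-\beta)n$ then gives $|I^*|\ge(\tfrac{r-3}{r-2}-O(\beta))k$ and (for $r\ge 5$) $|J^*|\ge(\tfrac{r-4}{r-2}-O(\beta))k$.

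Next I will invoke Lemma~\ref{lmm:lmm5.3} in the contrapositive. If $R$ contains a copy of $K_{r-2}$ on vertices $\{Y,Z_1,\ldots,Z_{r-3}\}$ with $Y\in I^*$ and $Z_1,\ldots,Z_{r-3}\in J^*$, then after possibly restricting $X^*$ to be disjoint from $Y\cup\bigcup_j Z_j$ at negligible loss in $\chi$, every $x\in X^*$ satisfies $|N(x)\cap Y|\ge d|Y|\ge\beta|Y|$ and $|N(x)\cap Z_j|\ge(1/2+\beta)|Z_j|$; together with $\chi(G[X^*])\ge C_0$ and the automatic $(\varepsilon,d)$-regularity of the constituent pairs, Lemma~\ref{lmm:lmm5.3} would force $H\subseteq G$, contradicting $H$-freeness. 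Hence no such $K_{r-2}$ exists in $R$; in particular, $R[J^*]$ is $K_{r-2}$-free, and no vertex of $I^*\setminus J^*$ extends any $K_{r-3}$ of $R[J^*]$ to a $K_{r-2}$ of $R$.

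From these constraints I will deduce that $R[I^*]$ is edit-close to the Turán graph $T_{|I^*|,r-3}$. Since $\delta(R[I^*])/|I^*|\ge\tfrac{r-4}{r-3}-O(\beta)$ strictly exceeds the Andrásfai--Erdős--Sós threshold $\tfrac{3r-13}{3r-10}$ for $K_{r-2}$-freeness, were $R[I^*]$ $K_{r-2}$-free, AES combined with Erdős--Simonovits stability would immediately give $|E(R[I^*])\triangle E(T_{|I^*|,r-3})|\le O(\mu)k^2$. The constraints above force every remaining $K_{r-2}$ in $R[I^*]$ to use two adjacent endpoints from $I^*\setminus J^*$, so handling them reduces to showing $e(R[I^*\setminus J^*])=o(k^2)$; this in turn will be established by further applications of Lemma~\ref{lmm:lmm5.3} using refined partitions of $X^*$ that track joint density patterns on cluster pairs in $I^*\setminus J^*$. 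After deleting $O(\mu)k^2$ edges to render $R[I^*]$ $K_{r-2}$-free, the AES/ES argument applies; applying Lemma~\ref{lmm:edit} to $G[V_{I^*}]$ with parameter $r'=r-3$ lifts the stability to $G$, and trimming or padding $V(G)\setminus V_{I^*}$ by $O(\beta)n$ vertices produces $G'=G\setminus S$ on exactly $n'=\tfrac{r-3}{r-2}n$ vertices with $|E(G')\triangle E(T_{n',r-3})|\le\mu n^2$.

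The main obstacle will be controlling the edges inside $R[I^*\setminus J^*]$: Lemma~\ref{lmm:lmm5.3} does not directly forbid such edges, since neither endpoint is a majority cluster and the already-established constraint only rules out $K_{r-3}$-extensions into $J^*$. Overcoming this will require either a finer refinement of $X^*$ (by joint density patterns on pairs of clusters lying outside $J^*$) or an iterative counting argument that forces most such edges to participate in forbidden configurations after restricting to carefully chosen subsets of $X^*$. Separately, the case $r=4$ is delicate because the lower bound on $|J^*|$ degenerates to zero; there one must refine $X^*$ further by joint-majority patterns on cluster pairs to recover a nontrivial $J^*$ before the main obstruction can be applied.
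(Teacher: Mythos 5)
Your proposal proves the wrong statement. What you have sketched is essentially the proof of \cref{thm:theta-stability-precise-version}: you apply the regularity lemma to $G$, partition $V(G)$ by density patterns, deduce via pigeonhole that some class $X^*$ has large chromatic number, use the phrase ``invoke Lemma~\ref{lmm:lmm5.3} in the contrapositive'' to forbid $K_{r-2}$-configurations in the reduced graph, and finish with Andr\'asfai--Erd\H{o}s--S\'os plus an edit-distance lift to conclude a Tur\'an-closeness statement. That is the argument in \cref{sesc:theta-stability}, and it explicitly \emph{uses} \cref{lmm:lmm5.3} as a black box. As a proof \emph{of} \cref{lmm:lmm5.3} it is circular --- you cite the very lemma you are asked to establish --- and it also terminates with the wrong conclusion: \cref{lmm:lmm5.3} asserts $H\subseteq G$ under the stated degree/regularity hypotheses, not that $G$ is edit-close to a Tur\'an graph.

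The actual proof of \cref{lmm:lmm5.3} is an embedding argument of a completely different flavor. One first observes that $H$ is $r$-near-acyclic if and only if $H\subseteq Z^{r,t}_\ell(T_1,\ldots,T_\ell)$ for appropriate trees $T_1,\ldots,T_\ell$ and $t\in\mathbb{N}$, where $Z^{r,t}_\ell$ is the modified Zykov graph; so it suffices to embed such a Zykov graph into $G$. The engine is \cref{prop:prop26} (Proposition~26 of \cite{allen2013chromatic}), which shows that if $\chi(G[X])$ is large and every $x\in X$ has $|N(x)\cap Y|\geq\beta|Y|$, then the pair $(X,Y)$ is $(C,\alpha)$-rich in copies of $Z^{3,t}_\ell$; by \cref{lmm24}, richness upgrades to $(C,\alpha)$-denseness, meaning a positive proportion of families $S\in\mathcal{F}^{r,t}_\ell(Y)$ are good. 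One then extends these families into the clusters $Z_1,\ldots,Z_{r-3}$ using the $(\varepsilon,d)$-regularity of the pairs $(Y,Z_j)$, $(Z_i,Z_j)$ and the $(1/2+\beta)$-majority condition $|N(x)\cap Z_j|\geq(1/2+\beta)|Z_j|$, producing a copy of $K_{r-3}[t]$ attached to the $S_I$'s. Finally, the hierarchy of edge sets $E_1,\ldots,E_\ell\subseteq E(X)$ with large average degree (guaranteed by goodness) lets one embed the trees $T_1,\ldots,T_\ell$ greedily inside $X$ via \cref{fact:4}, yielding $Z^{r,t}_\ell(T_1,\ldots,T_\ell)\subseteq G$ and hence $H\subseteq G$. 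Nothing in your proposal engages with Zykov graphs, the richness/denseness machinery, or the chromatic-number-driven embedding; you would need to start over.
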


\begin{proof}[Proof of \cref{thm:theta-stability-precise-version}]
Fix $r\ge 4$, $0<\beta<\frac{1}{30r^3}$ and an $r$-near-acyclic graph $H$.  Let $C$ be a sufficiently large integer whose value will be determined later.
Let $G$ be an $H$-free graph on $n$ vertices satisfying $\delta(G)>(\frac{r-3}{r-2}-\beta)n$ and $\chi(G)\ge C$. Set $d := \frac{\beta}{20r}$ and $\varepsilon := \frac{d^2}{2d + 2|H|}$.
Applying \cref{thm:RL} to $G$ with $k_0=\frac{r}{8\beta}$, we obtain a partition $V(G) = V_0 \cup V_1 \cup \cdots \cup V_k$ , together with an $(\varepsilon,d)$-reduced graph $R$ with vertex set $[k]$, where $\frac{r}{8\beta} \leq k\leq k_1$ for some $k_1=k_1(\varepsilon,d,k_0)$. Furthermore, the minimum degree of $R$ satisfies $\delta(R) \ge \big(\frac{r-3}{r-2} - 2\beta\big)k$.

We now define a refined partition of $V(G)$. For each pair of index sets $I_2 \subseteq I_1 \subseteq [k]$, let
\begin{align*}
X(I_1, I_2):=  \{v \in V(G): i \in I_1 \Leftrightarrow|N(v) \cap V_i| \geq \beta|V_i| \text { and } 
 i \in I_2 \Leftrightarrow|N(v) \cap V_i| \geq(1/2+\beta)|V_i|\}.
\end{align*} 
That is, $X(I_1, I_2)$ consists of all vertices $v \in V(G)$ such that for each $i \in [k]$, $i \in I_1$ if and only if $v$ has at least $\beta|V_i|$ neighbors in $V_i$, and
$i \in I_2$ if and only if $v$ has at least $(1/2+\beta)|V_i|$ neighbors in $V_i$.

We first establish a lower bound for $|I_1| + |I_2|$ when $X(I_1, I_2) \ne \varnothing$.
\begin{claim}\label{I1+I2}
If $X(I_1,I_2)\neq\varnothing$, then $|I_1|+|I_2|\ge 2\big(\frac{r-3}{r-2}-4\beta\big)k.$
\end{claim}
\begin{pf}
 For every vertex $x\in X(I_1, I_2)$, since $x$ has at most $(\varepsilon+\beta)n$ neighbors outside $V_{I_1}$,
\[|N(x)\cap V_{I_1}| \geq d(x)-(\varepsilon+\beta)n\geq \Big(\frac{r-3}{r-2}- 3\beta\Big) n.\]
On the other hand,
\[|N(x)\cap V_{I_1}|\le \frac{n}{k}\big((1/2+\beta)(|I_1|-|I_2|)+|I_2|\big)\le \beta n+\frac{n}{2k}(|I_1|+|I_2|).\]
Comparing the upper and lower bounds on $|N(x) \cap V_{I_1}|$ yields the desired inequality.
\end{pf}

Next, for those index sets $I_2 \subseteq I_1 \subseteq [k]$ with $X(I_1, I_2) \ne \varnothing$, we will show that the chromatic number $\chi(G[X(I_1, I_2)])$ is bounded whenever either $R[I_1]$ contains a copy of $K_{r-1}$ or $R[I_2]$ contains a copy of $K_{r-2}$.

\begin{claim}\label{claim:property-I_1-and-I_2}
There exist $C_1=C_1(H,\beta)$ and $C_2=C_2(H,\beta)$ such that for $X(I_1,I_2)\neq\varnothing$, the following hold.
\begin{itemize}
\item [{\rm (i)}] If $R[I_1]$ contains a copy of $K_{r-1}$, then $\chi(G[X(I_1,I_2)])\le C_1$;
\item [{\rm (ii)}] If $R[I_2]$ contains a copy of $K_{r-2}$, then $\chi(G[X(I_1,I_2)])\le C_2$.
\end{itemize}
\end{claim}

\begin{pf}
Since $H$ is $r$-near-acyclic, there exists a forest $F$ such that $H\subseteq F\vee K_{r-2}[|H|]$. 

For part (i), suppose that $R[I_1]$ contains a copy of $K_{r-1}$, without loss of generality, assume its vertex set is $[r-1]\subseteq I$.
 For any $x \in X(I_1, I_2)$, we have $|N(x) \cap V_i| \ge d |V_i|$ for all $i \in [r-1]$.
By \cref{fact:slicinglemma}, it follows that for every pair $\{i, j\} \in \binom{[r-1]}{2}$, the pair $(N(x) \cap V_i, N(x) \cap V_j)$ is $(\varepsilon/d, d - \varepsilon)$-regular. Hence, by \cref{thm:counting}, $G[N(x)]$ contains at least $\alpha_1 n^{(r-1)|H|}$ copies of $K_{r-1}[|H|]$ for some $\alpha_1=\alpha_1(H,\beta)>0$. 
  Since $G$ is $H$-free, it follows from \cref{lmm:lmm9}(b) that $\chi(G[X(I_1,I_2)])\leq |X(I_1,I_2)|\leq |H| / \alpha_1.$
  By letting $C_1(H,\beta)=|H| / \alpha_1$, this proves part (i).
  
\vskip 0.5em

 For part (ii), suppose that $R[I_2]$ contains a copy of $K_{r-2}$ with vertex set $[r-2] \subseteq I_2$. For any edge $xy\in E(G[X(I_1,I_2)])$, define $V_i'=N(\{x,y\})\cap V_i$ for each $i\in [r-2]$. Then \[|V_i'|\geq |N(x)\cap V_i|+|N(y)\cap V_i|-|V_i|\geq 2\beta|V_i|.\] By \cref{fact:slicinglemma},  each pair $(V_i',V_j')$ is $(\varepsilon /(2\beta),d-\varepsilon)$-regular for every $\{i,j\}\in\binom{[r-2]}{2}$. 
 Again by \cref{thm:counting}, $G[\bigcup_{i=1}^{r-2} V_i']$ contains at least $\alpha' n^{r-2}$ copies of $K_{r-2}$ for some $\alpha'=\alpha'(H,\beta)>0$. Therefore, by \cref{lmm:lmm10}, there exists a constant $\alpha_2=\alpha_2(\alpha', H)$ such that $\chi(G[X(I_1,I_2)])\leq 2|H|/\alpha_2+1$.    
 By letting $C_2(H,\beta)=2|H|/\alpha_2+1$, part (ii) is thus proved.
\end{pf}

Let $C_3$ be $C(H,\varepsilon,\beta,d)$ given from \cref{lmm:lmm5.3} and let $C=C(H,\beta)= 5^k \max\{C_1, C_2, 2C_3\}$.
Then there exists a pair  $I_2\subseteq I_1\subseteq [k]$ such that $\chi(G[X(I_1,I_2)])\ge C/4^k >\max\{C_1,C_2\}$.
Fix such a pair $(I_1, I_2)$. 
By \cref{claim:property-I_1-and-I_2}, we must have that $R[I_1]$ is $K_{r-1}$-free and $R[I_2]$ is $K_{r-2}$-free.

We now split the remainder of the proof into two cases, according to whether $R[I_2]$ contains a copy of $K_{r-3}$. 
Recall that $\delta(R)\geq \big(\frac{r-3}{r-2}-2\beta\big)k$,
hence any vertex in $R$ has at most $(\frac{1}{r-2}+2\beta)k$ non-neighbors. Consequently, any independent set in $R$ has size at most $\big(\frac{1}{r - 2} + 2\beta \big)k$.

\paragraph{Case 1: $R[I_2]$ is $K_{r-3}$-free.}In this case, we establish a stronger structural result that 
$$|E(G)\triangle E(T_{n,r-2})|\le  30r^3\beta n'^2.$$ 
We first claim $\chi(R[I_2])\leq r-4$. If $r\in \{4,5\}$, it is clear. Assume $r\geq 6$.
By \cref{I1+I2}, $|I_1|+|I_2|\ge 2(\frac{r-3}{r-2}-4\beta)k$, hence $|I_2|\ge (\frac{r-4}{r-2}-8\beta)k$. Therefore,
$$
\delta(R[I_2])\ge |I_2|-(k-\delta(R)) \geq|I_2|-\Big(\frac{1}{r-2}+2\beta\Big) k>\frac{3r-16}{3r-13}|I_2|.
$$
Hence, by \cref{thm:1974Andrasfai}, $R[I_2]$ is $(r-4)$-colorable.
Thus, for all $r\ge 4$, $I_2$ can be partitioned into $r-4$ disjoint independent sets. 
Hence $|I_2| \le (r-4)(\frac{1}{r-2}+2\beta) k.$ 

We can then again apply~\cref{I1+I2} to get $|I_1|\geq (1-2r\beta)k,$ which further implies that
\begin{align*}
  \delta(R[I_1]) \geq|I_1|-\Big(\frac{1}{r-2}+2\beta\Big) k>\frac{3r-10}{3r-7}|I_1|.
\end{align*}
Since $R[I_1]$ is $K_{r-1}$-free, it follows from \cref{thm:1974Andrasfai} that $R[I_1]$ is $(r-2)$-colorable. 
Hence, $I_1$ can be partitioned as $I_1 = W_1 \cup \cdots \cup W_{r - 2}$, where each $W_i$ is an independent set in $R$. We now estimate $|W_i|$ by observing that
\begin{align*}
\Big(\frac{1}{r-2}+2\beta\Big)k\ge |W_i|\ge |I_1|-(r-3)\Big(\frac{1}{r-2}+2\beta\Big) k\ge \Big(\frac{1}{r-2}-4r\beta\Big)k.
\end{align*}
Thus, $||W_i|-\frac{k}{r-2}|\leq 4r\beta k$ for each $i\in [r-2]$.

Moreover, as each $W_i$ is independent, each vertex in $W_i$ has at most $\big(\frac{1}{r - 2} + 2\beta\big)k - |W_i| \le (4r + 2)\beta k$ non-neighbors outside $W_i$.
Thus, the total number of missing edges across all distinct pairs $(W_i, W_j)$ is at most $(2r + 1)\beta k^2$. Consequently, we obtain
\begin{align*}
|E(R)\triangle E(T_{k,r-2})|\le k(k-|I_1|)+(2r+1)\beta k^2+k\cdot\Big(\sum_{i=1}^{r-2}\Big||W_i|-\frac{k}{r-2}\Big|\Big)\le(4r^2+1)\beta k^2.
\end{align*}
It follows by \cref{lmm:edit} that $|E(G)\triangle E(T_{n,r-2})|\le (8r^2+3)\beta n^2\le 4(8r^2+3)\beta n'^2\le 30r^3\beta n'^2$.

\paragraph{Case 2: $R[I_2]$ contains at least one copy of $K_{r-3}$.}
Suppose that $\{z_1,\ldots,z_{r-3}\}\subseteq I_2$ forms a copy of  $K_{r-3}$ in $R[I_2]$. We further divide the remaining of the proof into two subcases, depending on whether $N(\{z_1,\ldots,z_{r-3}\})\cap I_1=\varnothing$.

\paragraph{Subcase 2.1: $N(\{z_1,\ldots,z_{r-3}\})\cap I_1=\varnothing$.} We claim that $R[I_2]$ is $(r-3)$-colorable.  As $R[I_2]$ is $K_{r-2}$-free, this is clear when $r=4$. We may assume that $r\geq 5$. Note that 
\begin{align*}
    |N(\{z_1,\ldots,z_{r-3}\})|\ge k-(r-3)(k-\delta(R))\ge\Big(\frac{1}{r-2}-2\beta(r-3)\Big)k.
\end{align*}
Hence $|I_1|\leq(\frac{r-3}{r-2}+2\beta(r-3))k$. By \cref{I1+I2}, we have
\[\Big(\frac{r-3}{r-2}-3r\beta\Big)k<|I_2|\le |I_1|\le \Big(\frac{r-3}{r-2}+2\beta(r-3)\Big)k.\]
As each vertex in $R$ has at most $(\frac{1}{r-2}+2\beta) k$ non-neighbors,
\begin{align*}
\delta(R[I_2]) \geq|I_2|-\Big(\frac{1}{r-2}+2\beta\Big) k >\frac{3r-13}{3r-10}|I_2|.
\end{align*}
Then it follows from \cref{thm:1974Andrasfai} that $R[I_2]$ is $(r-3)$-colorable as claimed. Therefore, $I_2$ admits a partition $I_2=W_1\cup\cdots\cup W_{r-3}$ where each  $W_i$ is an independent set in $R$ for any $i\in [r-3]$. Hence, 
\begin{align*}
\left(\frac{1}{r-2}+2\beta\right) k\geq |W_i|\geq |I_2|-(r-4)\Big(\frac{1}{r-2}+2\beta\Big) k\geq \Big(\frac{1}{r-2}-(5r-8)\beta\Big)k.
\end{align*}
Then, for each $i\in [r-3]$, we have $||W_i|-\frac{k}{r-2}|\leq 5r\beta k$.

Moreover, since each vertex in  $W_i$ has at most $(\frac{1}{r-2}+2\beta) k-|W_i|\le (5r-6)\beta k$ non-neighbors outside $W_i$, the total number of missing edges among all distinct pairs $W_i$ and $W_j$ is at most $(5r-6)\beta k^2/2$. Let $R'=R[I_2]$ and $k'=\frac{r-3}{r-2}k$, $G'=G[\bigcup_{i\in I_2} V_i]$ and $n'=\frac{r-3}{r-2}n$.  We conclude that 
\begin{align*}
|E(R')\triangle E(T_{k',r-3})|\le k|k'-|I_2||+\frac{(5r-6)\beta k^2}{2}+k\cdot\Big(\sum_{i=1}^{r-3}\Big||W_i|-\frac{k}{r-2}\Big|\Big)\le 5r^2\beta k^2.
\end{align*}
It follows by \cref{lmm:edit} that $|E(G')\triangle E(T_{n',r-3})|\le  30r^3\beta  n'^2$.

\paragraph{Subcase 2.2: $N(\{z_1,\ldots,z_{r-3}\})\cap I_1\neq\varnothing$.} Pick an arbitrary vertex $y\in N(\{z_1,\ldots,z_{r-3}\})\cap I_1$. 
 Let $I_1'=I_1\setminus \{y,z_1,\ldots,z_{r-3}\}$ and $I_2'=I_2\setminus \{y,z_1,\ldots,z_{r-3}\}$.

If $R[I_2']$ is $K_{r-3}$-free, the proof is almost identical to that in Case 1 (with only the change of replacing $(I_1,I_2)$ by $(I_1',I_2')$). Now, suppose that $R[I_2']$ contains a copy of $K_{r-3}$ on the vertices $\{z_1',\ldots,z_{r-3}'\}\subseteq I_2'$. Since $G$ is $H$-free, the following claim is an immediate consequence of \cref{lmm:lmm5.3}.

\begin{claim}\label{cl:2K}
$N(\{z_1',\ldots,z_{r-3}'\})\cap I_1'=\varnothing$.
\end{claim}

\begin{pf}
Suppose for contradiction that there exists $y' \in I_1'$ such that  $\{y',z_1',\ldots,z_{r-3}'\}$ forms a copy of $K_{r-2}$ in $R$. Let $Y,Y',Z_1,Z_1',\ldots,Z_{r-3},Z_{r-3}'$ be the clusters of $V(G)$ corresponding to  the vertices $y, y',z_1,z_1',\ldots,z_{r-3},z_{r-3}'$ in $R$ respectively, and let
\begin{align*}
  X=X(I_1, I_2) \cap(Y' \cup Z_1' \cup \cdots \cup Z_{r-3}') \text{\quad and \quad} X'=X(I_1, I_2) \setminus X. 
\end{align*}

It is routine to verify by definition that the two  sets $\{X, Y, Z_1, \ldots, Z_{r-3}\}$ and $\{X', Y', Z_1', \ldots, Z_{r-3}'\}$ both satisfy the assumptions of \cref{lmm:lmm5.3}. 
Since $G$ is $H$-free, it follows from \cref{lmm:lmm5.3} that $\max\{\chi(G[X]), \chi(G[X'])\} \le C_3$, which contradicts the assumption that $\chi(X(I_1,I_2))\geq C/4^k >2C_3$.
\end{pf}

Then, using essentially the same argument as in Subcase 2.1, with $I_2'$ in place of $I_2$, one can show that the conclusion of Subcase 2.1 also holds in this case.
\end{proof}

\section{Fractional chromatic threshold}\label{fractional}

In this section, we aim to prove \cref{thm:fractional-chromatic-threshold-2}. First, consider the case when $\mathcal{M}(H)$ contains no forest. As in \cref{construction:extremal-graph-pi}, let  $G=G'\vee K_{r-2}[|G'|]$, where $G'$ is a $(C,|H|+1)$-Erd\H{o}s graph. Then, $G$ is an $H$-free graph with $\delta(G)=\frac{r-2}{r-1}\cdot |G|$ and $\chi_f(G) \geq C$, which implies that $\delta_{\chi_f}(H)\ge\frac{r-2}{r-1}$. On the other hand, $\delta_{\chi_f}(H)$ is trivially upper bounded by the Tur\'an density of $H$ and so by Erd\H{o}s–Stone Theorem \cite{erdos1966limit}, we have $\delta_{\chi_f}(H)\le \frac{r-2}{r-1}$. Thus, we have the following proposition.

\begin{proposition}
  Let $H$ be a graph with $\chi(H)=r \geq 3$. If $\mathcal{M}(H)$ does not contain a forest, then $\delta_{\chi_f}(H)=\frac{r-2}{r-1}$.
\end{proposition}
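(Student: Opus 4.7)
The strategy is to prove matching lower and upper bounds, essentially formalizing the sketch given in the preceding paragraph.

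For the lower bound $\delta_{\chi_f}(H) \ge \frac{r-2}{r-1}$, I would revisit \cref{construction:extremal-graph-pi}: let $G'$ be a $(C, |H|+1)$-Erd\H{o}s graph and set $G := G' \vee K_{r-2}[|G'|]$. Then $\delta(G) = \frac{r-2}{r-1}|G|$, and since $\chi_f$ is additive under joins, $\chi_f(G) = \chi_f(G') + (r-2) \ge C$. The key verification is that $G$ is $H$-free. Suppose for contradiction some copy of $H$ lies in $G$; let $V_1, \ldots, V_{r-2}$ be the vertices of $H$ landing in the $r-2$ independent parts of $K_{r-2}[|G'|]$, and let $V_0$ denote those landing in $G'$. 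Each $V_i$ with $i \ge 1$ is independent in $H$, and $H[V_0]$ is a subgraph of $G'$ on at most $|H|$ vertices; by the girth assumption on $G'$ this subgraph is a forest, and hence bipartite. Splitting $V_0$ into two color classes yields an $r$-coloring of $H$ whose first $r-2$ classes are $V_1, \ldots, V_{r-2}$. Deleting these classes leaves $H[V_0]$, a forest in $\mathcal{M}(H)$, contradicting the hypothesis that $\mathcal{M}(H)$ contains no forest. Since $C$ was arbitrary, this establishes the lower bound.

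For the upper bound, I would simply appeal to the Erd\H{o}s--Stone theorem. If $d > \frac{r-2}{r-1}$ and $G$ is an $n$-vertex graph with $\delta(G) \ge dn$, then $e(G) \ge \frac{d}{2}n^2 > \mathrm{ex}(n, H)$ for all sufficiently large $n$, since $\chi(H) = r$ gives Tur\'an density $\frac{r-2}{r-1}$. Hence every such $G$ contains $H$, so no $H$-free graph with $\delta(G) \ge dn$ exists for large $n$, and the required bound on $\chi_f$ holds vacuously for any constant $C$. This yields $\delta_{\chi_f}(H) \le \frac{r-2}{r-1}$.

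I do not anticipate any significant obstacle: both inequalities are one-step reductions to tools already recorded in \cref{subsection2.1} and classical extremal graph theory. The only delicate point is the $H$-freeness check for the lower-bound construction, where one must carefully combine the girth of $G'$ (to force any small subgraph to be a forest) with the hypothesis on $\mathcal{M}(H)$; this is the step I would write out most carefully.
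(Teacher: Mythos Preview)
Your proposal is correct and follows essentially the same approach as the paper: the lower bound via \cref{construction:extremal-graph-pi} and the upper bound via the Erd\H{o}s--Stone theorem. Your detailed verification of $H$-freeness and of $\chi_f(G)\ge C$ spells out steps the paper leaves implicit, but the argument is the same.
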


Next, we turn to the case where $\mathcal{M}(H)$ contains a forest. Using \cref{construction:extremal-graph-theta}, i.e., let $G=G'\vee K_{r-3}[|G'|]$, where $G'$ is a $(C,|H|+1)$-Erd\H{o}s graph, we obtain an $H$-free graph $G$ with $\delta(G)=\frac{r-3}{r-2}\cdot |G|$ and $\chi_f(G) \geq C$, which gives the lower bound $\delta_{\chi_f}(H)\ge\frac{r-3}{r-2}$.

It remains to show the following theorem to obtain the corresponding upper bound.

\begin{theorem}\label{thm:fractional-r}
  Let $H$ be a graph with $\chi(H)=r\ge 3$ and suppose $\mathcal{M}(H)$ contains a forest. Then for any $n$-vertex $H$-free graph $G$ with $\delta(G)\ge (\frac{r-3}{r-2}+\varepsilon)n$, we have $\chi_f(G)=O_{\varepsilon,H}(1)$. 
\end{theorem}

\begin{proof}
Suppose that $H\subseteq F\vee K_{r-2}[t]$ for some forest $F\in \mathcal{M}(H)$. Let $G$ be an $H$-free graph with $\delta(G)\ge (\frac{r-3}{r-2}+\varepsilon)n$. We shall bound its fractional chromatic number.

    If $r\ge 4$, then for any vertex $v\in V(G)$ we have
\begin{align*}
    \frac{\delta(G[N(v)])}{d(v)}\ge \frac{d(v)-(n-d(v))}{d(v)}\ge \frac{(\frac{r-3}{r-2}+\varepsilon)n-(\frac{1}{r-2}-\varepsilon)n}{(\frac{r-3}{r-2}+\varepsilon)n}>\frac{r-4}{r-3}+\varepsilon.
\end{align*}
By Erd\H{o}s-Simonovits supersaturation theorem \cite{erdHos1983supersaturated}, there exists at least $\alpha n^{(r-2)t}$ copies of $K_{r-2}[t]$ in $G[N(v)]$ for some constant $\alpha=\alpha(\varepsilon)$. Note that the statement is also true for $r=3$.

For each copy $T$ of $K_{r-2}[t]$ in $G$, $G[N(T)]$ must be $F$-free since $G$ is $H$-free. 
By \cref{fact:forest-free}, $G[N(T)]$ is $|F|$-colorable, as $F$ is a forest. Thus, $N(T)$ can be partitioned into $s$ disjoint independent sets, where $s=\chi(G[N(T)])\le |F|\le|H|$. Label these independent sets as $T(1),T(2),\ldots,T(s)$. 
Then, for any vertex $v$ and a copy $T'$ of  $K_{r-2}[t]$ in $G[N(v)]$, we assign  to $v$  the color $(T',i)$ where the independent set $T'(i)$ containing $v$. Since $G$ contains at most $n^{(r-2)t}$ copies of $K_{r-2}[t]$ and each copy of $K_{r-2}[t]$ defines at most $|H|$ independent sets in its neighborhood, every vertex received at least $\alpha n^{(r-2)t}$ colors from a palette of size at most $a:=|H|n^{(r-2)t}$.

Let $b=\alpha n^{(r-2)t}$. Now we construct an $a:b$-coloring of $V(G)$. For each vertex $v$, arbitrarily select $b$ colors among those assigned to it. We now show that this is a valid $b$-fold coloring of $G$. Indeed, a color is assigned to both vertices $u$ and $v$ if and only if there exist a copy $T$ of $K_{r-2}[t]$ and $1\le i\le |H|$ such that $V(T)\subseteq N(u)\cap N(v)$ and $u, v \in T(i)$, which implies that $u$ and $v$ are non-adjacent. Thus $$\chi_f(G)\le \frac{a}{b}\le \frac{|H|n^{(r-2)t}}{\alpha n^{(r-2)t}}=O_{\varepsilon,H}(1),$$
as desired.
\end{proof}

\section{Bounded-VC chromatic threshold}\label{sec:bounded-VC}

In this section, we prove \cref{thm:bounded-VC}. Given a set system $\mathcal{F}\subseteq 2^{V}$, the \emph{Vapnik-Chervonenkis dimension} (\emph{VC-dimension} for short) of $\mathcal{F}$ , denoted by $\mathrm{VC}(\mathcal{F})$, is the largest integer $d$ such that there exists a subset $X\subseteq V$ with $|X|=d$ satisfying the following condition: for every subset $Y\subseteq X$, there exists $F\in\mathcal{F}$ with $F\cap X=Y$. In this case, we say that $X$ is \emph{shattered} by $\mathcal{F}$. 
We define the VC-dimension of a graph $G$, denoted by $\mathrm{VC}(G)$, to be the VC-dimension of the set system induced by the neighborhoods of its vertices, that is, $\mathrm{VC}(G):=\mathrm{VC}(\mathcal{F})$, where $\mathcal{F}=\{N(v):v\in V(G)\}$.

\subsection{Proof of the lower bound}

To prove the lower bound $\delta_\chi^{\text{VC}}(K_r)\ge\frac{r-3}{r-2}$, we aim to show that for any integer $C$, there exists an infinite sequence of $n$-vertex $K_r$-free graphs $G_n$ with $\mathrm{VC}(G_n)= 2$, $\delta(G_n)\ge \frac{r-3}{r-2} n$, and $\chi(G_n)\ge C$. For this purpose, we will employ a classical construction due to  Erd\H{o}s and Hajnal \cite{MR0263693}.

\begin{definition}[Shift Graph]\label{def:shift-graphs}
For positive integers $m>k\ge 2$, the shift graph $\mathrm{Sh}_m^k$ defined on the vertex set $V(\mathrm{Sh}_m^k)=\{(x_1, \ldots, x_k): 1 \leq x_1<\cdots<x_k \leq m\}$ is a graph in which two vertices $\bm{x}=(x_1, \ldots, x_k)$ and $\bm{y}=(y_1, \ldots, y_k)$ are adjacent if and only if $x_i=y_{i+1}$ for all $i \in [k-1]$.  
\end{definition}

\begin{figure}[H]
    \centering
\begin{tikzpicture}[scale=1.3]
    \coordinate (14) at (1,4);
    \coordinate (24) at (2,4);
    \coordinate (34) at (3,4);
    \coordinate (44) at (4,4);
    \coordinate (23) at (2,3);
    \coordinate (33) at (3,3);
    \coordinate (43) at (4,3);
    \coordinate (32) at (3,2);
    \coordinate (42) at (4,2);
    \coordinate (41) at (4,1);

    \draw [red,line width=1.5pt] (14) -- (23);
    \draw [red,line width=1.5pt] (23) -- (32);
    \draw [red,line width=1.5pt] (32) -- (41);
    \draw [blue,line width=1.5pt] (14) -- (33);
    \draw [blue,line width=1.5pt] (24) -- (32);
    \draw [blue,line width=1.5pt] (23) -- (42);
    \draw [blue,line width=1.5pt] (33) -- (41);
    \draw [green,line width=1.5pt] (34) -- (41);
    \draw [green,line width=1.5pt] (14) -- (43);
    \draw [orange,line width=1.5pt] (24) to[bend left=18] (42);

    \filldraw[fill=white] (14) circle (0.2cm);
    \filldraw[fill=white] (24) circle (0.2cm);
    \filldraw[fill=white] (34) circle (0.2cm);
    \filldraw[fill=white] (44) circle (0.2cm);
    \filldraw[fill=white] (23) circle (0.2cm);
    \filldraw[fill=white] (33) circle (0.2cm);
    \filldraw[fill=white] (43) circle (0.2cm);
    \filldraw[fill=white] (32) circle (0.2cm);
    \filldraw[fill=white] (42) circle (0.2cm);
    \filldraw[fill=white] (41) circle (0.2cm);

    \node at (14) [font=\footnotesize] {1,2};
    \node at (24) [font=\footnotesize] {1,3};
    \node at (34) [font=\footnotesize] {1,4};
    \node at (44) [font=\footnotesize] {1,5};
    \node at (23) [font=\footnotesize] {2,3};
    \node at (33) [font=\footnotesize] {2,4};
    \node at (43) [font=\footnotesize] {2,5};
    \node at (32) [font=\footnotesize] {3,4};
    \node at (42) [font=\footnotesize] {3,5};
    \node at (41) [font=\footnotesize] {4,5};
    
\end{tikzpicture}
    \caption{The shift graph $\mathrm{Sh}_5^2$}
    \label{fig:enter-label}
\end{figure}

 In \cite{MR0263693}, Erd\H{o}s and Hajnal showed that the shift graph $\mathrm{Sh}_m^2$ is $K_3$-free and has chromatic number $(1+o(1))\log_2 m$. 
We now demonstrate that the VC-dimension of $\mathrm{Sh}_m^2$ equals 2.

\begin{proposition}\label{prop:VC}
$\mathrm{VC}(\mathrm{Sh}_m^2)=2$. 
\end{proposition}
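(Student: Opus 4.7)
The plan is to use the following explicit description of adjacency: for $p = (a,b)$ and $v = (x,y)$ in $\mathrm{Sh}_m^2$, we have $v \sim p$ iff $y = a$ or $x = b$. Equivalently, $N(p)$ decomposes into a ``left piece'' $\{v : y = a\}$ and a ``right piece'' $\{v : x = b\}$. This decomposition drives the whole argument. I will also use the order-reversing map $(a,b) \mapsto (m{+}1{-}b, \, m{+}1{-}a)$, which is easily checked to be a graph automorphism of $\mathrm{Sh}_m^2$ swapping the roles of first and second coordinates.

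For the lower bound $\mathrm{VC}(\mathrm{Sh}_m^2) \ge 2$, I would exhibit two shattered vertices, for instance $p_1 = (1,2)$ and $p_2 = (3,4)$ when $m \ge 4$: the vertices $(1,4), (2,4), (1,3), (2,3)$ realize the four patterns $(0,0), (1,0), (0,1), (1,1)$ respectively, by a direct check of the adjacency rule.

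For the upper bound $\mathrm{VC}(\mathrm{Sh}_m^2) \le 2$, I would suppose for contradiction that three distinct vertices $p_i = (a_i, b_i)$, $i \in [3]$, are shattered, and locate a pattern in $\{0,1\}^3$ that cannot be realized. Realizing $(1,1,1)$ by some $v = (x,y)$ requires $\{i : y = a_i\} \cup \{i : x = b_i\} = [3]$; since $x$ and $y$ are single values, this is impossible when all $a_i$ are pairwise distinct and all $b_i$ are pairwise distinct. So two $a$'s or two $b$'s must coincide. Applying the automorphism above if necessary, we may assume $a_1 = a_2 =: a$, and hence $b_1 \ne b_2$. Then $v \in N(p_1) \cap N(p_2)$ forces $y = a$ (the alternative $x = b_1 = b_2$ is ruled out). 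Now consider pattern $(1,1,0)$: it forces $y = a$ with $y \ne a_3$ and $x \ne b_3$; if $a_3 = a$, this is already unrealizable, and we are done. Otherwise $a_3 \ne a$, and realizing $(1,1,1)$ forces $v \in N(p_3)$; since $a_3 \ne a = y$, this requires $x = b_3$, and $x < y$ gives $b_3 < a$. Combining with $a < b_1$ and $a < b_2$, the three values $b_1, b_2, b_3$ are pairwise distinct. Finally, realizing $(0,1,1)$ forces $y \ne a$ (from $v \notin N(p_1)$), then $x = b_2$ (from $v \in N(p_2)$), and then $y = a_3$ (from $v \in N(p_3)$ and $b_2 \ne b_3$); this yields $v = (b_2, a_3)$ with $b_2 < a_3$. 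But $a_3 < b_3 < a < b_2$, a contradiction. Hence $(0,1,1)$ is unrealizable, contradicting shatteredness.

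The main obstacle is choosing a short sequence of patterns whose joint analysis pins down enough structure to force a contradiction. The key is that $(1,1,1)$ combined with $(1,1,0)$ localizes $p_3$ to the ``left'' of $a$ (specifically $b_3 < a$), and this location is exactly what makes pattern $(0,1,1)$ impossible, since it would require $y = a_3$ to lie to the \emph{right} of $x = b_2 > a$. The use of the automorphism streamlines the proof by removing the need to separately handle the symmetric case of two equal second coordinates.
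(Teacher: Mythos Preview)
Your proof is correct and follows essentially the same approach as the paper: exhibit a shattered pair for the lower bound, and for the upper bound use the $(1,1,1)$ pattern to force two of the three vertices to share a coordinate, then locate an unrealizable pattern. The paper's execution differs only in minor details — it WLOGs to two equal \emph{second} coordinates (say $i_2=j_2=x_1$) and shows directly that every common neighbor of the first and third vertex is also a neighbor of the second, ruling out pattern $(1,0,1)$ in one step without the ordering chain $a_3<b_3<a<b_2$; your explicit use of the order-reversing automorphism is a nice touch that the paper leaves implicit in its ``without loss of generality.''
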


\begin{proof}
As depicted in~\cref{fig:enter-label}, the 2-set $\{(2,4),(3,4)\}$ is shattered which establishes the lower bound. To prove the upper bound $\mathrm{VC}(\mathrm{Sh}_m^2)\le 2$, suppose, for contradiction, that there exists a subset $\{(i_1,i_2),(j_1,j_2),(k_1,k_2)\}\subseteq V(\mathrm{Sh}_m^2)$ which is shattered by the neighborhoods of vertices in $\mathrm{Sh}_m^2$. Then, there exists $(x_1,x_2)\in V(\mathrm{Sh}_m^2)$ such that all of $(i_1,i_2),(j_1,j_2),(k_1,k_2)$ are neighbors of $(x_1,x_2)$. By definition, every neighbor of $(x_1,x_2)$ must be of the form either $(a,x_1)$ for some $1\le a<x_1$ or $(x_2,b)$ for some $x_2<b\le m$. By the pigeonhole principle, at least two of the vertices $(i_1,i_2),(j_1,j_2),(k_1,k_2)$ must share a common coordinate structure, say without loss of generality, $(i_1,i_2)=(a,x_1)$ and $(j_1,j_2)=(a',x_1)$ with distinct $a,a'<x_1$. 
   To derive the desired contradiction, it suffices to show that there is no vertex that is adjacent to both $(i_1,i_2)$ and $(k_1,k_2)$, but not to $(j_1,j_2)$.
   Indeed, if $(k_1,k_2)=(x_2,b)$, then $(a,x_1)$ and $(x_2,b)$ has a unique common neighbor $(x_1,x_2)$, which is also adjacent to $(a',x_1)$.
   If $(k_1,k_2)=(a'',x_1)$ for some $a''<x_1$, then the common neighbor of $(a,x_1)$ and $(a'',x_1)$ is of the form $(x_1,c)$ for some $x_1<c$, which is also adjacent to $(a',x_1)$.
\end{proof}

We now present the construction that establishes the lower bound.
\begin{construction}\label{cons:lower-bd:VC}
Given $m>k\ge 2$ and $r\ge 4$, for sufficiently large $n$, let $G'$ be the vertex-disjoint union of $\mathrm{Sh}_m^2$ and $\frac{n}{r-2}-\binom{m}{2}$ isolated vertices. Then let  $G=G'\vee K_{r-3}[|G'|]$.
\end{construction}

Since $G'$ is $K_3$-free, $G$ is $K_r$-free. Moreover, it is not hard to see that any shattered subset of vertices in $G$ intersects at most one vertex part. Thus, we have $\mathrm{VC}(G)=\mathrm{VC}(\mathrm{Sh}_m^2)=2$. To conclude, $G$ is a $K_r$-free graph with $\delta(G)\ge\frac{r-3}{r-2}n$ and $\mathrm{VC}(G)=2$, and $\chi(G)$  grows arbitrarily large as $m$ tends to infinity.

\subsection{Proof of the upper bound}
The upper bound follows from the theorem below.
 \begin{theorem}\label{thm:bdd VC}
 Let $c>0$, and let $G$ be an $n$-vertex $K_r$-free graph with minimum degree $(\frac{r-3}{r-2}+c)n$ and VC-dimension $d$.
 Then $\chi(G) \leq (16d/c\cdot\log(d/c))^{r-2}$.
\end{theorem}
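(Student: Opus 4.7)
The plan is to prove the theorem by induction on $r$. The base case $r = 3$ is precisely the bounded-VC chromatic threshold result for triangles established by \L{}uczak--Thomass\'e \cite{luczak2010coloring} and re-proved in \cite{liu2024beyond}, which yields $\chi(G) \leq (16d/c)\log(d/c)$ for any triangle-free graph of minimum degree at least $cn$ and VC-dimension at most $d$. The short proof in \cite{liu2024beyond} is based on applying the Haussler--Welzl $\varepsilon$-net theorem to the neighborhood set system, and the inductive step will follow the same template.

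For the inductive step, assume the bound for $K_{r-1}$-free graphs and let $G$ satisfy the assumptions of the theorem with $\alpha := \tfrac{r-3}{r-2} + c$. I would apply the $\varepsilon$-net theorem with $\varepsilon = \alpha$ to the set system $\mathcal{F} = \{N(v) : v \in V(G)\}$, which has VC-dimension at most $d$ and whose sets all have relative size at least $\alpha \geq c$. This produces a dominating set $D \subseteq V(G)$ of size at most $(8d/\alpha)\log(8d/\alpha) \leq (8d/c)\log(d/c)$; that is, every $v \in V(G)$ has a neighbor in $D$.

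Next I would analyse the induced subgraphs $G[N(u)]$ for $u \in D$. Each is clearly $K_{r-1}$-free and has VC-dimension at most $d$. For any $w \in N(u)$ one has $|N(w) \cap N(u)| \geq |N(u)| - (1-\alpha)n$, so writing $|N(u)| = \beta n$ with $\beta \in [\alpha, 1]$, the minimum-degree ratio satisfies
\[
\frac{\delta(G[N(u)])}{|N(u)|} \;\geq\; 1 - \frac{1-\alpha}{\beta} \;\geq\; \frac{2\alpha-1}{\alpha} \;=\; \frac{r-4}{r-3} + \frac{c(r-2)}{\alpha(r-3)} \;\geq\; \frac{r-4}{r-3} + c,
\]
using $\alpha \leq 1$ and $r-2 \geq r-3$. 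Hence the induction hypothesis gives $\chi(G[N(u)]) \leq \big((16d/c)\log(d/c)\big)^{r-3}$ for every $u \in D$.

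To conclude, partition $V(G) \setminus D$ as a disjoint union $\bigcup_{u \in D} V_u$ with $V_u \subseteq N(u)$ (assigning each $v \notin D$ to any single $u \in D$ with $v \in N(u)$). Colour each $V_u$ with a private palette of size at most $\chi(G[N(u)])$ and colour $G[D]$ with an additional private palette of size $|D|$. The total number of colours is at most
\[
|D| + |D| \cdot \big((16d/c)\log(d/c)\big)^{r-3} \;\leq\; 2|D| \cdot \big((16d/c)\log(d/c)\big)^{r-3} \;\leq\; \big((16d/c)\log(d/c)\big)^{r-2}.
\]
The main technical point is to verify that the minimum-degree constant propagates correctly ($c' \geq c$) and that the $\varepsilon$-net bound fits into the constant $16$ after doubling; both amount to direct computations rather than conceptual obstacles. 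The only genuine input from outside the induction is the triangle-free base case from \cite{liu2024beyond}.
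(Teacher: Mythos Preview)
Your proposal is correct and follows essentially the same approach as the paper: induction on $r$, with the Haussler--Welzl $\varepsilon$-net theorem applied to the neighborhood set system to obtain a small transversal (dominating set), followed by applying the induction hypothesis to each $G[N(u)]$ after verifying that the minimum-degree constant propagates (indeed $c' \ge c$). Two minor differences are worth noting: the paper does not import the base case $r=3$ as an external result but proves it directly within the same argument (each $N(v)$ is independent, so $\chi(G)\le |T|$), and the paper avoids colouring $D$ separately by simply using $V(G)=\bigcup_{v\in T}N(v)$, which already covers the vertices of $T$ since every vertex has a neighbour in the transversal; this makes the final arithmetic slightly cleaner but is not a conceptual difference.
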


To prove \cref{thm:bdd VC}, we will make use of the celebrated $\varepsilon$-net theorem of Haussler and Welzl \cite{haussler1986epsilon}. We will need some definitions before introducing this result.

A \emph{transversal} of a set system $\mathcal{F}\subseteq 2^X$ is a subset $T\subseteq X$ that intersects every set in $\mathcal{F}$. The \emph{transversal number} of $\mathcal{F}$, denoted by $\tau(\mathcal{F})$, is the smallest size among all transversals of $\mathcal{F}$. A \emph{fractional transversal} of $\mathcal{F}$ is a weight function $\omega: X\rightarrow[0,1]$ defined on the ground set $X$ such that, for every set $A\in\mathcal{F}$, we have $\omega(A):=\sum_{a\in A}\omega(a)\ge 1$. The \emph{fractional transversal number} of $\mathcal{F}$, denoted by $\tau^*(\mathcal{F})$, is defined as the minimum of $\omega(X)=\sum_{x\in X}\omega(x)$ over all fractional transversals $\omega$ of $\mathcal{F}$.

By definitions, $\tau^*(\mathcal{F})\le \tau(\mathcal{F})$. However, the gap between them could be arbitrarily large. Haussler and Welzl \cite{haussler1986epsilon} proved the following lemma, showing that the gap can be bounded if $\mathcal{F}$ has bounded VC-dimension.
\begin{lemma}[\cite{haussler1986epsilon}]\label{lmm:VC-dim}
 Every set system $\mathcal{F}$ with $V C$-dimension $d$ satisfies
$\tau(\mathcal{F}) \leq 16 d \tau^*(\mathcal{F}) \log \left(d \tau^*(\mathcal{F})\right).$
\end{lemma}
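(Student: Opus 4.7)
The plan is to reduce the stated inequality to the Haussler--Welzl $\varepsilon$-net theorem, which is itself proved via a probabilistic double-sampling argument together with the Sauer--Shelah bound on the number of distinct traces. First, pass from $\tau^*$ to a probability measure: let $\omega:X\to[0,1]$ achieve $\tau^*(\mathcal{F})=:t$, and set $p(x):=\omega(x)/t$, so that $p$ is a probability distribution on $X$ satisfying $p(A)=\omega(A)/t\geq 1/t$ for every $A\in\mathcal{F}$. A set $T\subseteq X$ is a transversal of $\mathcal{F}$ iff $T\cap A\neq\varnothing$ for each $A$, which is precisely the defining property of a $(1/t)$-net for $(\mathcal{F},p)$. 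Hence it suffices to produce such a net of size at most $16dt\log(dt)$, which is the content of the quantitative $\varepsilon$-net theorem with $\varepsilon:=1/t$.

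To prove the $\varepsilon$-net theorem, I would draw a sample $S$ of size $s:=\lceil 16(d/\varepsilon)\log(d/\varepsilon)\rceil$ i.i.d.\ from $p$ and show that the \emph{bad} event $E:=\{\exists\,A\in\mathcal{F}\colon A\cap S=\varnothing\}$ has probability strictly less than $1$, which guarantees the existence of a concrete transversal of size $s$. The classical trick is to introduce an independent second sample $S'$ of the same size and consider the auxiliary event $E':=\{\exists\,A\in\mathcal{F}\colon A\cap S=\varnothing\text{ and }|A\cap S'|\geq s\varepsilon/2\}$. Since $\mathbb{E}[|A\cap S'|]=s\cdot p(A)\geq s\varepsilon\geq d\log(d/\varepsilon)$, a standard Chernoff lower tail (in the spirit of \cref{lem:prob}) gives $\Pr[|A\cap S'|\geq s\varepsilon/2\mid S]\geq 1/2$ conditionally on any $S$ missing $A$, so $\Pr[E']\geq \tfrac12\Pr[E]$.

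To upper bound $\Pr[E']$, condition on the multiset $M:=S\cup S'$ of $2s$ points. Conditionally on $M$, the ordered pair $(S,S')$ is a uniformly random balanced split of $M$, so for a fixed set $A\in\mathcal{F}$ with $|A\cap M|\geq s\varepsilon/2$, the chance that every element of $A\cap M$ lands in the second half is at most $\binom{2s-s\varepsilon/2}{s}/\binom{2s}{s}\leq 2^{-s\varepsilon/2}$. Since the event only depends on the trace $A\cap M$, the Sauer--Shelah lemma bounds the number of distinct sets to union-bound over by $(2es/d)^d$, giving $\Pr[E'\mid M]\leq (2es/d)^d\cdot 2^{-s\varepsilon/2}$. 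Plugging in the chosen $s$ makes this strictly less than $1/2$, hence $\Pr[E]<1$ and a net of the desired size exists. The main obstacle is calibrating the constants: one must verify that the exponential decay $2^{-s\varepsilon/2}$ dominates the polynomial blow-up $(2es/d)^d$ with the specific constant $16$, which requires care in both the Chernoff step and in the hypergeometric estimate after conditioning on $M$, together with checking that $s\varepsilon/2$ is a valid integer threshold (rounding and the small-$\varepsilon$ regime can be handled by a direct Chernoff bound, absorbing constants).
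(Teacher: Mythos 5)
The paper does not prove this lemma; it is quoted verbatim with a citation to Haussler and Welzl and used as a black box in the proof of \cref{thm:bdd VC}. So there is no internal proof to compare against. What you have written is the standard proof of the Haussler--Welzl $\varepsilon$-net theorem, preceded by the textbook reduction from $\tau^*$ to an $\varepsilon$-net: normalize an optimal fractional transversal $\omega$ by its total weight $t=\tau^*(\mathcal{F})$ to obtain a probability measure $p$, observe that every $A\in\mathcal{F}$ then has $p(A)\geq 1/t$, and note that a $(1/t)$-net for $(\mathcal{F},p)$ is precisely a transversal. Your double-sampling argument is the correct one: pass from $E$ to $E'$ via a one-sided Chernoff tail on $|A\cap S'|$ to get $\Pr[E']\geq\tfrac12\Pr[E]$, then condition on the $2s$-point multiset $M$, use that the balanced random split assigns a fixed trace of size $\geq s\varepsilon/2$ entirely to $S'$ with probability at most $2^{-s\varepsilon/2}$, and union-bound over the at most $(2es/d)^d$ traces given by Sauer--Shelah. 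Two small items to verify for full rigor: the Chernoff step needs $s\varepsilon$ bounded below by an absolute constant, which your choice $s=\lceil 16(d/\varepsilon)\log(d/\varepsilon)\rceil$ delivers once $d/\varepsilon$ exceeds a small constant (the degenerate regime $\tau^*$ near $1$ is handled separately and trivially); and the constant $16$ is tied to the logarithm base in the cited statement, so the final numerical trade-off between $(2es/d)^d$ and $2^{-s\varepsilon/2}$ should be checked against that convention. Modulo these routine calibrations, your argument is correct and is exactly the one in the cited source.
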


\begin{proof}[Proof of \cref{thm:bdd VC}]
By assumption, the set system $\mathcal{F}=\{N(v):v\in G\}$ has VC-dimension $d$. Since every set in $\mathcal{F}$ contains at least $cn$ elements of $V(G)$, it follows that $\tau^*(\mathcal{F})\le 1/c$ as demonstrated by the constant fractional transversal $\omega\equiv \frac{1}{cn}$. Therefore, by \cref{lmm:VC-dim} we can find a transversal $T\subseteq V(G)$ for $\mathcal{F}$ with size  $|T|=\tau(\mathcal{F})\le 16d/c\cdot\log(d/c)$.

We prove \cref{thm:bdd VC} by induction on $r$. Observe that by the definition of transversal,  $V(G)=\bigcup_{v\in T} N(v)$ and so $\chi(G)\le \sum_{v\in T}\chi(G[N(v)])$. For the base case $r=3$, when $G$ is $K_3$-free, $N(v)$ is an independent set for each $v\in T$. Thus $\chi(G)\le |T|\le 16d/c\cdot\log(d/c).$ Now, assume that \cref{thm:bdd VC} holds for every integer $3\leq r'<r$.   
\begin{claim}\label{bdd-chi}
 For every vertex $v\in V(G)$, $\chi(G[N(v)])\le (16d/c\cdot\log(d/c))^{r-3}$. 
\end{claim}

\begin{pf}
Let $v\in V(G)$ and define $G':=G[N(v)]$, then $|G'|=d(v)\ge (\frac{r-3}{r-2}+c)n$.
Since $G$ is $K_{r}$-free, it follows that $G'$ is $K_{r-1}$-free. Thus, for every vertex $w\in V(G')$, we have 
\begin{align*}
 d_{G'}(w)\ge d_G(w)-(n-|G'|)\ge  \Big(\frac{r-3}{r-2}+c\Big)n-n+|G'|\geq \Big(\frac{r-4}{r-3}+c'\Big)|G'|,
\end{align*}
where $c'=\frac{c(r-2)^2}{(r-3)(c(r-2)+r-3)}>c>0$. Since $\mathrm{VC}(G')\le\mathrm{VC}(G)\le d$, by the induction hypothesis, we obtain $\chi(G')\leq (16d/c\cdot\log(d/c))^{r-3}$.
\end{pf}

Recall that $\chi(G)\le \sum_{v\in T}\chi(G[N(v)])$.
Thus, by~\cref{bdd-chi}, we have
\[\chi(G)\le |T|\cdot (16d/c\cdot\log(d/c))^{r-3} \le (16d/c\cdot\log(d/c))^{r-2},\]
as desired.
\end{proof}

\section*{Acknowledgments}
We would like to thank Rob Morris for fruitful and enlightening discussions at the early stage of this project. We are also grateful to the referee for a careful reading of the paper and for helpful suggestions that improved both the clarity and the exposition.

\section*{Declaration of No Conflict}
The authors declare that they have no conflicts of interest.

\bibliographystyle{abbrv}
\bibliography{references.bib}

\appendix
\section{Appendix: Proof of Lemma \ref{lmm:lmm5.3}}

To embed near-acyclic graphs, we instead embed so-called Zykov graphs which are universal graphs for near-acyclic graphs.
Let $K(v, X)$ denote the set of pairs $\{vx: x \in X\}$.

\begin{definition}[Modified Zykov Graphs \cite{allen2013chromatic}]
 Let $T_1, \ldots, T_{\ell}$ be disjoint trees, where $T_j$ has bipartition $A^{0}_j \cup A^{1}_j$, $j\in[\ell]$. 
 The graph $Z_{\ell}(T_1, \ldots, T_{\ell})$ is constructed as follows: 
$$
V(Z_{\ell}(T_1, \ldots, T_{\ell})):=\left(\bigcup_{j \in[\ell]} A^{0}_j \cup A^{1}_j\right) \cup\left\{u_I: I \subseteq[\ell]\right\}
$$
with edge set
$$
E(Z_{\ell}(T_1, \ldots, T_{\ell})):=\bigcup_{j=1}^{\ell}\left(E\left(T_j\right) \cup \bigcup_{j \in I \subseteq[\ell]} K(u_I, A^{0}_j) \cup \bigcup_{j \notin I \subseteq[\ell]} K(u_I, A^{1}_j)\right) .
$$
For each $r \geq 3$ and $t \in \mathbb{N}$, the modified Zykov graph $Z_{\ell}^{r, t}(T_1, \ldots, T_{\ell})$ is the graph obtained from $Z_{\ell}(T_1, \ldots, T_{\ell})$ by performing the following two operations:
\begin{itemize}
    \item[\rm (a)] Add vertices $W=\{w_1, \ldots, w_{r-3}\}$, and all edges with an endpoint in $W$.
    \item[\rm (b)] Blow up each vertex $u_I$ with $I \subseteq[\ell]$ to a set $S_I$ of size $t$ and each vertex $w_j$ in $W$ to a set $S_j^{\prime}$ of size $t$.
\end{itemize}
\end{definition}
Finally, we shall write $Z_{\ell}^{r, t}$ for the modified Zykov graph obtained when each tree $T_i, i \in[\ell]$, is a single edge; that is, $Z_{\ell}^{r, t}=Z_{\ell}^{r, t}(e_1, \ldots, e_{\ell})$.

\begin{observation}[\cite{allen2013chromatic}]
   Let $\chi(H)=r$. Then $H$ is $r$-near-acyclic if and only if there exist trees $T_1, \ldots, T_{\ell}$ and $t \in \mathbb{N}$ such that $H$ is a subgraph of $Z_{\ell}^{r, t}(T_1, \ldots, T_{\ell})$. 
\end{observation}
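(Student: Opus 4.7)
The plan is to prove the two directions separately, using the modified Zykov graph as a universal template for $r$-near-acyclic graphs.

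For the forward direction ($\Rightarrow$), suppose $H$ is $r$-near-acyclic. By definition there are independent sets $W_1, \ldots, W_{r-3}$ in $H$ whose removal yields a near-acyclic graph $H'$, which decomposes as $V(H') = V(F) \cup S$ where $F$ is a forest with tree components $T_1, \ldots, T_\ell$ (of bipartitions $A_j^0 \cup A_j^1$) and $S$ is an independent set such that, for each $j$, every $s \in S$ has its $T_j$-neighbors entirely in $A_j^0$ or entirely in $A_j^1$. I would take the Zykov graph built from these same trees $T_1, \ldots, T_\ell$ with blow-up parameter $t := |H|$, and construct an embedding $\phi \colon V(H) \hookrightarrow V(Z_\ell^{r,t}(T_1, \ldots, T_\ell))$ as follows: let $\phi$ be the identity on $V(F)$; for each $s \in S$ define $I(s) := \{ j \in [\ell] : N_H(s) \cap V(T_j) \subseteq A_j^0 \}$ (well-defined by the near-acyclic property, with a fixed tie-breaking convention when $s$ has no $T_j$-neighbors) and inject $\{s \in S : I(s) = I\}$ into $S_I$; finally inject each $W_i$ into $S_i'$. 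Since $|S_I|, |S_i'| \geq t \geq |H|$, the injections exist. Edges inside $F$ are preserved, edges from $s \in S$ to $V(T_j)$ are mapped to edges of $Z_\ell$ by the definition of $I(s)$, and every edge incident to some $W_i$ is automatically mapped into $Z_\ell^{r,t}$ because the sets $S_i'$ are joined to everything.

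For the reverse direction ($\Leftarrow$), assume $H \subseteq Z_\ell^{r,t}(T_1, \ldots, T_\ell)$ and $\chi(H) = r$. I would set $W_i := V(H) \cap S_i'$ for $i \in [r-3]$; each $W_i$ is independent because $S_i'$ is (it is the blow-up of a single vertex). Let $H' := H - (W_1 \cup \cdots \cup W_{r-3})$, so $H' \subseteq Z_\ell(T_1, \ldots, T_\ell)$. Coloring each tree $T_j$ by its bipartition and giving every $u_I$ a third color exhibits a proper $3$-coloring of $Z_\ell(T_1, \ldots, T_\ell)$, hence $\chi(H') \leq 3$; on the other hand $\chi(H') \geq \chi(H) - (r-3) = 3$, so $\chi(H') = 3$. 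To show $H'$ is near-acyclic, let $F := H' \cap \bigcup_j V(T_j)$ and $S := H' \cap \bigcup_{I \subseteq [\ell]} S_I$. Then $F$ is a forest, since $Z_\ell$ has no edges between distinct $T_j$ and each $F \cap V(T_j)$ is a subgraph of the tree $T_j$, and $S$ is independent, since each $S_I$ is independent and $Z_\ell$ has no edges between distinct $S_I, S_{I'}$.

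The remaining point is to verify the key near-acyclic condition: for each tree component $T$ of $F$ with bipartition $V_1(T) \cup V_2(T)$, no vertex in $S$ has neighbors in both $V_1(T)$ and $V_2(T)$. This is where the construction of $Z_\ell$ pays off: $T$ is a connected subgraph of some $T_j$, so its bipartition is the restriction $(V(T) \cap A_j^0, V(T) \cap A_j^1)$; any $s \in S$ lies in some $S_I$, and by definition $s$ has neighbors only in $A_j^0$ (when $j \in I$) or only in $A_j^1$ (when $j \notin I$), so it meets at most one side of $T$'s bipartition. This finishes the reverse direction.

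The main obstacle I anticipate is purely notational: keeping the tie-breaking convention for $I(s)$ consistent when $s \in S$ has no neighbors in a given $T_j$, and making sure that the bipartition of each component of $F \cap V(T_j)$ is genuinely the restriction of $(A_j^0, A_j^1)$ rather than some reflected labelling. Both issues are resolved by fixing a canonical bipartition of each $T_j$ at the outset and using it throughout.
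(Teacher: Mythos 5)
The paper cites this observation from Allen et al.\ without reproducing a proof, so there is no in-paper argument to compare against. Your proof is correct and is the natural unwinding of the definitions in both directions: the embedding $\phi$ via the index set $I(s)$ in the forward direction, and the factorization through the restricted bipartitions $(V(T)\cap A_j^0, V(T)\cap A_j^1)$ in the reverse direction, are exactly the constructions the modified Zykov graph was designed to support. The only places that merit a sentence of care in a write-up are the ones you already flag: the tie-breaking when $s$ has no $T_j$-neighbours (harmless, since then $s$ meets neither side), and the fact that the near-acyclic condition is invariant under swapping the two classes of a component's bipartition (so the canonical choice inherited from $(A_j^0,A_j^1)$ suffices).
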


Let $\ell$ be an integer.
  For each $I\subseteq [\ell]$, let $\bm{u}(I) = \{u_1,\cdots, u_\ell\}\in \{0,1\}^\ell$ with $u_i=1$ for $i\in I$ and $u_i=0$ for $i\notin I$.
  For convenience, for any $\bm{u}(I) = \{u_1,\cdots, u_\ell\}\in \{0,1\}^\ell$, let $A^{\bm{u}(I)}=\bigcup_{i=1}^\ell A^{u_i}_i.$
  
 We restate Lemma \ref{lmm:lmm5.3} with more details as follows.

\begin{lemma}\label{lmm:0-1/2}
     Let $\ell,t$ be two positive integers, and $T = T_1\cup T_2 \cup \cdots \cup T_\ell$ be a forest, where $V(T_i)=A_i^0\cup A_i^1$  for $i\in [\ell]$.
    For any $\beta >0$, there exists a constant $C$ and sufficiently large integer $n$ such that the following holds:   
    Let $G$ be an $n$-vertex graph and $X, Y, Z_1, \ldots, Z_{r-3}$ be pairwise disjoint subsets of $V(G)$, with $\chi(G[X])\ge C$ and $|Y|=|Z_j|$ for each $j \in[r-3]$. Suppose that $(Y, Z_j)$ and $(Z_i, Z_j)$ are $(\varepsilon, d)$-regular for each $i \neq j$, and that
\begin{align*}
|N(x) \cap Y| \geq \beta|Y| \quad \text{and} \quad |N(x) \cap Z_j| \geq (1/2+\beta)|Z_j|
\end{align*}
for every $x \in X$ and $j \in[r-3]$.
Then $Z_{\ell}^{r, t}(T_1, \ldots, T_{\ell})$ can be embedded into $G$ in the following way:
there exists a copy of $T$ in $G[X]$, disjoint subsets $S_I$ of size $t$ for $I\subseteq [\ell]$, and $S'_j$ of size $t$ for $j\in [r-3]$ such that
\begin{itemize}
    \item for each $I\subseteq [\ell]$, $S_I\subseteq N(A^{\bm{u}(I)}) \cap Y$;
    \item for each $j\in [r-3]$, $S'_j\subseteq N(T)\cap Z_j$, and $S'_j$ is completely adjacent to each $S'_{j^{\prime}}$ with $j \neq j^{\prime}$ and also to each $S_I$ with $I\subseteq [\ell]$. 
\end{itemize}
\end{lemma}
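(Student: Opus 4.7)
The plan is to embed the modified Zykov graph $Z_\ell^{r,t}(T_1,\ldots,T_\ell)$ into $G$ through a three-stage procedure: first a ``cleaning'' converting the marginal $(1/2+\beta)$-density on the $Z_j$'s into a constant-size fully-common subset; then an embedding of the forest $T=T_1\cup\cdots\cup T_\ell$ into a subset of $X$ whose chromatic number remains unbounded; and finally a regularity-based extraction of the blobs $S_I\subseteq Y$ and $S'_j\subseteq Z_j$ of size $t$ with the required complete-bipartite adjacencies.

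\textbf{Stage 1 (common neighborhoods on the $Z_j$-side).} The main obstacle is that the hypothesis $|N(x)\cap Z_j|\ge(1/2+\beta)|Z_j|$ sits just above the trivial pairwise-intersection threshold and is insufficient to maintain any common neighborhood in $Z_j$ as the forest grows beyond two vertices. I resolve this by iteratively spending the chromatic number of $G[X]$. For each $j\in[r-3]$ and each $i=1,\ldots,k$ (with $k$ a constant depending on $t$ and the regularity parameters), I pick a vertex $z_{j,i}\in Z_j$ such that passing from the current $X_{\rm curr}$ to $N(z_{j,i})\cap X_{\rm curr}$ reduces the chromatic number by at most a bounded factor $f(\beta)$; such a $z_{j,i}$ exists by combining the subadditivity $\chi(X)\le\chi(N(z)\cap X)+\chi(\bar N(z)\cap X)$ with the averaged inequality $\mathbb{E}_z|\bar N(z)\cap X|\le(1/2-\beta)|X|$, which forces the structurally richer side of most splits to be the neighborhood. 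By scheduling the $z_{j,i}$'s so that each newly chosen $Z_j^*:=\{z_{j,1},\ldots,z_{j,k}\}$ lies in the common neighborhood of the previously selected $Z_{j'}^*$'s --- feasible via slicing (Fact~\ref{fact:slicinglemma}) applied to the $(\varepsilon,d)$-regular pair $(Z_{j'},Z_j)$ --- one obtains $X^*\subseteq X$ whose $\chi$ is still growing to infinity and pairwise completely joined subsets $Z_j^*\subseteq Z_j$ of size $k$ satisfying $Z_j^*\subseteq N(x)$ for every $x\in X^*$. A similar but shorter cleaning on $Y$, partitioning $X^*$ by finitely many ``types'' of $N(x)$ on a constant-size sample from $Y$, then delivers $X^{**}\subseteq X^*$ with $\chi(G[X^{**}])$ still large and a linear-size $Y^*\subseteq Y$ with $|N(x)\cap Y^*|\ge\beta'|Y^*|$ for each $x\in X^{**}$.

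\textbf{Stages 2 and 3 (forest embedding and blob extraction).} Since $\chi(G[X^{**}])\ge|V(T)|+1$ (achievable by taking $C$ large), $G[X^{**}]$ contains a subgraph of minimum degree $\ge|V(T)|$ and hence a copy of $T$ by Fact~\ref{fact:4}. During the greedy embedding of $T$, a further pigeonhole over the $2^\ell$ possible sides $A^{\bm{u}(I)}$ --- branching on whether each newly embedded vertex is placed in $A_j^0$ or $A_j^1$ --- preserves the invariant that $\bigl|\bigcap_{v\in A^{\bm{u}(I)}}N(v)\cap Y^*\bigr|\ge(\beta')^{|V(T)|}|Y^*|$ for every $I\subseteq[\ell]$. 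Setting $\tilde Y_I:=\bigcap_{v\in A^{\bm{u}(I)}}N(v)\cap Y^*$ (linear-size) and $\tilde Z_j:=Z_j^*$ (of size $k\ge t$, contained in the common neighborhood of $V(T)$ by Stage~1), I pick any $t$-subsets $S'_j\subseteq\tilde Z_j$; the $S'_j$'s are pairwise completely joined by the Stage~1 construction. Slicing gives that $(\tilde Y_I,\tilde Z_j)$ has density $\ge d-\varepsilon'$, so at most an $\varepsilon'/d$-fraction of $\tilde Y_I$ fails to be $(d-\varepsilon')$-dense into each $\tilde Z_j$, and a routine greedy selection of $t$ vertices in $\tilde Y_I$ completely adjacent to $\bigcup_j S'_j$ therefore succeeds, yielding the required $S_I$.

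\textbf{Main obstacle.} The critical and most delicate step is Stage~1. It is the only place where the huge chromatic number is genuinely spent, and the delicate point is to schedule the iterative cleaning across the $r-3$ different $Z_j$'s and the $k$ rounds per $Z_j$ so that the total multiplicative loss in $\chi$ remains bounded in $\beta,d,r,t$, while the slicing-based sequencing of the $Z_j^*$'s simultaneously guarantees that they are pairwise completely joined --- a global constraint that prevents any naive ``one-$Z_j$-at-a-time'' argument.
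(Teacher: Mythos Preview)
Your plan reverses the order of the paper's argument, and this reversal creates a real gap in Stage~2. The paper (deferring to \cite{allen2013chromatic}) first uses Proposition~26 and Lemma~24 there to convert high $\chi(G[X])$ together with the $\beta$-degree into $Y$ into the statement that $(X,Y)$ is $(C,\alpha)$-dense in copies of $Z_\ell^{3,t}$; Lemma~37 and Claim~39 then upgrade this to a fixed good $S\in\mathcal F_\ell^{r,t}(Y\cup Z)$ and edge sets $E_1,\ldots,E_\ell\subseteq E(X)$ of average degree $\ge 8|T|$ such that \emph{every} choice $e_i\in E_i$ satisfies $\mathbf e^\ell\to S$; the forest is embedded last, inside the $E_i$'s. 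Your Stage~2 instead embeds $T$ first and asserts that the invariant $\bigl|\bigcap_{v\in A^{\bm u(I)}}N(v)\cap Y^*\bigr|\ge(\beta')^{|V(T)|}|Y^*|$ can be maintained via ``pigeonhole over the $2^\ell$ sides''. But the only hypothesis available is $|N(v)\cap Y^*|\ge\beta'|Y^*|$ for each individual $v$, and for $\beta'<1/2$ this gives no lower bound at all on $|N(v)\cap N(v')\cap Y^*|$, let alone on $2^\ell$ simultaneous intersections. Already for $\ell=2$ with $T_1=\{x,y\}$, $T_2=\{x',y'\}$ you need $S_\varnothing\subseteq N(x)\cap N(x')\cap Y$, and nothing prevents $N(x)\cap Y$ and $N(x')\cap Y$ from being disjoint; swapping the roles of $A_j^0$ and $A_j^1$ merely relabels the $2^\ell$ required intersections and does not create any of them. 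The entire purpose of the rich/dense hierarchy in \cite{allen2013chromatic} is to pre-select the edges so that common $S$'s survive --- this cannot be recovered by a direct greedy embedding.

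Stage~1 is also unjustified. You claim that some $z\in Z_j$ has $\chi(N(z)\cap X_{\rm curr})\ge\chi(X_{\rm curr})/f(\beta)$, invoking subadditivity of $\chi$ together with $\mathbb E_z|\bar N(z)\cap X|\le(1/2-\beta)|X|$. But a bound on the \emph{size} of $\bar N(z)\cap X$ says nothing about its chromatic number: the high-$\chi$ part of $X$ could live on a tiny subset that consistently lies outside $N(z)$. The $(1/2+\beta)$ condition guarantees common neighbours in $Z_j$ only for \emph{pairs} of vertices of $X$, not for the $|V(T)|$ vertices you eventually need, and there is no evident way to iterate your cleaning while controlling $\chi$ rather than cardinality.
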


  Given a graph $G$, a set $Y \subseteq V(G)$, and integers $\ell, t \in \mathbb{N}$, define $\mathcal{G}_{\ell}^{r,t}(Y)$ to be the collection of functions
  $$S: 2^{[\ell]} \cup[r-3] \rightarrow\binom{Y}{t}.$$

  We say that $S=(S_I : I\subseteq [\ell], S’_1,\dots, S’_{r-3})  \in \mathcal{G}_{\ell}^{r,t}(Y)$ is \textit{proper} if these sets are pairwise disjoint.
  We shall write $\mathcal{F}_{\ell}^{r,t}(Y)$ for the collection of proper functions in $\mathcal{G}_{\ell}^{r,t}(Y)$. 
  For any $S\in \mathcal{F}_{\ell}^{r,t}(Y)$, it is natural to think of $S$ as a family $\{S_I: I \subseteq[\ell]\} \cup\{S_j^{\prime}: j \in[r-3]\}$ of disjoint subsets of $Y$ of size $t$. 

  For an ordered pair $(x, y)$ of vertices of $G$, a function $S \in \mathcal{F}_{\ell}^{r,t}(Y)$, and $i \in[\ell]$, we write $(x, y) \rightarrow_i S$, if the following conditions hold:
\begin{itemize}
    \item $S_j' \subseteq N(x, y) \text { for every } j \in[r-3]$;
    \item $\bigcup\limits_{I: i \in I} S_I \subseteq N(x)  \text { and } \bigcup\limits_{I: i \notin I} S_I \subseteq N(y).$
\end{itemize}

For an edge $e=x y \in E(G)$, we write $e \rightarrow_i S$ if either $(x, y) \rightarrow_i S$ or $(y, x) \rightarrow_i S$. 
When edges $e_1,\dots, e_{\ell}$ are given, we write $\mathbf{e}^j$ to denote the $j$-tuple $(e_1,\dots, e_j)$ for each $j\in [\ell]\cup \{0\}$, with $\mathbf{e}^0$ representing the empty tuple.
We define
$$
\mathbf{e}^{\ell} \rightarrow S \Leftrightarrow e_i \rightarrow_i S \quad \text { for each } i \in[\ell] .
$$

Note that the graph $Z_{\ell}^{r, t}$ consists of a set of pairwise disjoint edges $e_1, \ldots, e_{\ell}$ and a function $S \in \mathcal{F}_{\ell}^{r, t}(Y)$ such that $\mathbf{e}^{\ell} \rightarrow S$.

  Recall that for any subset $X \subseteq V(G)$, $E(X)$ denotes the edge set of the subgraph $G[X]$.
  If $D \subseteq E(G)$, then $\delta(D)$ ($\bar{d}(D)$) represents the minimum degree (average degree, respectively) of the graph induced by $D$, that is the graph with vertex set $\bigcup_{e\in D} e$ and edge set $D$.

  In the following definition, the sets $D(\mathbf{e}^j)$ should be interpreted as forming a hierarchical structure (or tree) of graphs.
  Specifically, each edge of $D(\mathbf{e}^j)$ is associated with a distinct graph $D(\mathbf{e}^{j+1})$.
  This hierarchy satisfies the following key properties: 
\begin{itemize}
    \item[\rm (a)] each graph $D(\mathbf{e}^j)$ has large minimum degree, and
    \item[\rm (b)] for a positive proportion of the sets $S \in \mathcal{F}_{\ell}^{r,t}(Y)$, we have $\mathbf{e}^{\ell} \rightarrow S$.
\end{itemize}  

\begin{definition}[$(C, \alpha)$-Rich in Copies of $Z_{\ell}^{r,t}$]
  Let $X$ and $Y$ be disjoint vertex sets in a graph $G$, let $C \in \mathbb{N}$ and $\alpha>0$, and let $s:=2^{\ell}t$. We say that $(X, Y)$ is $(C, \alpha)$-rich in copies of $Z_{\ell}^{r,t}$ if
$$
\begin{aligned}
& \exists D= D(\mathbf{e}^0) \subseteq E(X) \text{\quad s.t.} \\
&\qquad\forall e_1 \in D,~\exists D(\mathbf{e}^1) \subseteq E(X) \text{\quad s.t.} \\
&\qquad\qquad\qquad\qquad\qquad \vdots \\
&\qquad\qquad \qquad  \forall e_{\ell-1} \in D(\mathbf{e}^{\ell-2}),~ \exists D(\mathbf{e}^{\ell-1}) \subseteq E(X) \text{\quad s.t. \quad} \forall e_{\ell} \in D(\mathbf{e}^{\ell-1})
\end{aligned}
$$
the following properties hold:
\begin{itemize}
    \item[\rm (a)] $\delta(D), \delta(D(\mathbf{e}^1)), \ldots, \delta\left(D\left(\mathbf{e}^{\ell-1}\right)\right)>C$, and
    \item[\rm (b)] $|\{S \in \mathcal{F}_{\ell}^{r,t}(Y): \mathbf{e}^{\ell} \rightarrow S\}| \geq \alpha|Y|^s$.
\end{itemize}

\end{definition}

\begin{definition}[Good Function, $(C, \alpha)$-Dense, \cite{allen2013chromatic}]
  A function $S \in \mathcal{F}_{\ell}^{r,t}(Y)$ is $(\ell, t, C, \alpha)$-good for a tuple $\mathbf{e}^q$ and $(X, Y)$ if there exist sets
$$
E_{q+1}, \ldots, E_{\ell} \subseteq E(X), \quad \text { with } \bar{d}(E_j) \geq 2^{-\ell} \alpha C \text { for each } q+1 \leq j \leq \ell,
$$
such that for every $e_{q+1} \in E_{q+1}, \ldots, e_{\ell} \in E_{\ell}$, we have $\mathbf{e}^{\ell} \rightarrow S$.
\end{definition}

When the constants $(\ell, t, C, \alpha)$ and the sets $(X, Y)$ are clear from the context, we shall omit them. We shall abbreviate ``$(\ell, t, C, \alpha)$-good for $\mathbf{e}^0$ and $(X, Y)$ '' to ``$(\ell, t, C, \alpha)$-good for $(X, Y)$''.

The pair $(X, Y)$ is \emph{$(C, \alpha)$-dense} in copies of $Z_{\ell}^{r,t}$ if there exist at least $2^{-\ell} \alpha|Y|^s$ families $S \in  \mathcal{F}_{\ell}^{r,t}(Y)$ which are $(\ell, t, C, \alpha)$-good for $(X, Y)$.

\begin{proposition}[\cite{allen2013chromatic}]\label{prop:prop26}
    For every $\ell, t \in \mathbb{N}$ and $\beta>0$, there exists $\alpha>0$ such that, for every $C \in \mathbb{N}$, there exists $C^{\prime} \in \mathbb{N}$ such that the following holds. Let $G$ be a graph and let $X$ and $Y$ be disjoint subsets of $V(G)$, such that $|N(x) \cap Y| \geq \beta|Y|$ for every $x \in X$.

Then either $\chi(G[X]) \leq C^{\prime}$, or $(X, Y)$ is $(C, \alpha)$-rich in copies of $Z_{\ell}^{3, t}$.
\end{proposition}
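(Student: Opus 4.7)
The plan is to combine Proposition \ref{prop:prop26} (which handles the $r=3$, edges-only case) with two additional steps: (a) upgrading from single edges $e_i$ to prescribed trees $T_i$ inside $G[X\cup Y]$, and (b) adding the $r-3$ partite blocks $S'_j$ inside $Z_1,\ldots,Z_{r-3}$ using the regularity structure among the $Z_j$'s.

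For (a), apply \cref{prop:prop26} with the parameters $(\ell,t,\beta)$ and some sufficiently large constant $C$. Since $\chi(G[X])\ge C$, we obtain that $(X,Y)$ is $(C_0,\alpha)$-rich in copies of $Z_\ell^{3,t}$, with the nested hierarchy $D\supseteq D(\mathbf{e}^1)\supseteq\cdots$ of edge-subsets of $E(X)$, each having minimum degree exceeding $C_0$. We then replace each edge $e_i=x_iy_i$ (with $x_i\in A_i^0$, $y_i\in A_i^1$) by the full tree $T_i$ by attaching new leaves one at a time: a new leaf attached to a vertex $w$ on the $A_i^\sigma$ side must lie in $N(w)\cap \bigcap_{I:\,[i\in I]=\sigma}N(S_I)\cap X$. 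Each such step requires only that a constant-sized common neighbourhood in $X$ be nonempty, which is enforced by descending one level in the hierarchy $D(\mathbf{e}^q)$ and using its large minimum-degree guarantee. Choosing $C_0$ large enough in terms of $|T|$, the tree-extension succeeds simultaneously at every level, and the output is that a positive fraction $\alpha'|Y|^{s}$ of proper families $S\in\mathcal{F}_\ell^{3,t}(Y)$ are ``good for trees'' in the appropriate analogue of the original definition.

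For (b), fix such a tree-good family and carry out a double averaging. First, the condition $|N(x)\cap Z_j|\ge (\tfrac12+\beta)|Z_j|$ for every $x\in X$ implies, by convexity applied to the bipartite graph $G[X,Z_j]$, that the expected common neighbourhood in $Z_j$ of a uniformly random $|V(T)|$-subset of $X$ is at least $(\tfrac12+\beta)^{|V(T)|}|Z_j|-o(|Z_j|)$. Second, by the $(\varepsilon,d)$-regularity of $(Y,Z_j)$, a uniformly random $t$-subset of $Y$ has common neighbourhood in $Z_j$ of size at least $(d-\varepsilon)^{t}|Z_j|-o(|Z_j|)$. Combining these two averaging estimates across the $\alpha'|Y|^s$ tree-good families from step (a), a positive fraction satisfies both simultaneously for every $j\in[r-3]$; pick one such family and set
\begin{equation*}
W_j\;:=\;Z_j\cap N(V(T))\cap\bigcap_{I\subseteq[\ell]} N(S_I),
\end{equation*}
so that $|W_j|\ge \gamma|Z_j|$ for some $\gamma=\gamma(\ell,t,r,\beta,d)>0$. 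By the slicing lemma (\cref{fact:slicinglemma}) each pair $(W_i,W_j)$ remains $(\varepsilon/\gamma,d-\varepsilon)$-regular, so the counting lemma (\cref{thm:counting}) applied to $K_{r-3}$ produces $\Omega(|Z_j|^{(r-3)t})$ copies of $K_{r-3}[t]$ in $G[W_1,\ldots,W_{r-3}]$. Any such copy furnishes the required disjoint $S'_1,\ldots,S'_{r-3}$: pairwise completely joined, and each $S'_j\subseteq W_j\subseteq N(V(T))\cap N(\bigcup_I S_I)\cap Z_j$.

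The main obstacle is step (a), the tree-extension argument: one must navigate the nested hierarchy of \cref{prop:prop26} while maintaining enough chromatic-number ``reserve'' in $G[X]$ at every recursion level. The forest structure of $T$ is essential here, since it permits a leaf-by-leaf extension in which each step reduces to a common-neighbourhood question of bounded complexity, absorbable by a bounded degree loss at each level of the hierarchy. Step (b), by contrast, is a standard combination of averaging and regularity-lemma embedding.
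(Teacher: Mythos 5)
Your proposal is not a proof of Proposition~\ref{prop:prop26}; it is an outline for Lemma~\ref{lmm:lmm5.3} (equivalently Lemma~\ref{lmm:0-1/2}). Two symptoms make this clear. First, your very first sentence announces ``apply Proposition~\ref{prop:prop26}'' as a black box --- which is circular, since that proposition is exactly the statement to be proved. Second, your steps~(a) and (b) rely on data that are simply not present in Proposition~\ref{prop:prop26}'s hypotheses: there are no trees $T_1,\dots,T_\ell$, no sets $Z_1,\dots,Z_{r-3}$, no condition $|N(x)\cap Z_j|\geq(\tfrac12+\beta)|Z_j|$, and no $(\varepsilon,d)$-regular pairs in that statement. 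Proposition~\ref{prop:prop26} involves only the pair $(X,Y)$, the single hypothesis $|N(x)\cap Y|\geq\beta|Y|$ for every $x\in X$, the case $r=3$, and plain edges $e_i$ (not trees). Its conclusion is the purely internal dichotomy: either $\chi(G[X])$ is bounded, or one can exhibit the nested hierarchy $D(\mathbf{e}^0)\supseteq D(\mathbf{e}^1)\supseteq\cdots$ of large-minimum-degree edge-sets in $E(X)$ together with many $S\in\mathcal{F}_\ell^{3,t}(Y)$ with $\mathbf{e}^\ell\to S$.

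A genuine proof of Proposition~\ref{prop:prop26} therefore has to build the $(C,\alpha)$-rich hierarchy from scratch, using nothing but the large chromatic number of $G[X]$ and the $\beta$-spread of neighborhoods into $Y$. The standard route (following \cite{allen2013chromatic}) is a recursion on $\ell$: at each level one partitions $X$ according to how a vertex's neighborhood in $Y$ interacts with a candidate family $\{S_I\}$, uses $\chi(G[X])\geq C'$ to locate a color class (or a subgraph of $E(X)$) in which every edge is ``good'' for a positive-density set of choices of $S_I$, and then prunes to a subgraph of minimum degree $>C$ before recursing one level deeper. This hierarchy-construction step is the entire content of the proposition and is missing from your proposal. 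Your steps~(a) and (b) are downstream: they describe how to use such a hierarchy to embed $Z_\ell^{r,t}(T_1,\dots,T_\ell)$, which is precisely the job of Lemma~24 of \cite{allen2013chromatic} and the appendix proof of Lemma~\ref{lmm:0-1/2}, not of Proposition~\ref{prop:prop26} itself.
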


\begin{lemma}[\cite{allen2013chromatic}]\label{lmm24}
 If $(X, Y)$ is $(C, \alpha)$-rich in copies of $Z_{\ell}^{r,t}$, then $(X, Y)$ is $(C, \alpha)$-dense in copies of $Z_{\ell}^{r,t}$. 
\end{lemma}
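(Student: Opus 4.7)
The strategy is to use Proposition \ref{prop:prop26} and Lemma \ref{lmm24} to extract abstract structure from $\chi(G[X])\ge C$, then combine it with the regularity and $(1/2+\beta)$-degree conditions on $Z_1,\ldots,Z_{r-3}$ to realize the embedding.

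First, I apply Proposition \ref{prop:prop26} to the pair $(X,Y)$ with parameters $\ell$, $t$, $\beta$, obtaining a constant $\alpha>0$. For a sufficiently large internal parameter $C_0=C_0(|T|,d,\varepsilon,\beta,r)$ to be fixed, the proposition supplies a threshold $C'$ such that $\chi(G[X])\ge C'$ forces $(X,Y)$ to be $(C_0,\alpha)$-rich in copies of $Z_\ell^{3,t}$. Setting $C:=C'$, Lemma \ref{lmm24} upgrades richness to density, yielding at least $2^{-\ell}\alpha|Y|^{2^\ell t}$ proper families $S=(S_I:I\subseteq[\ell])\in\mathcal{F}_\ell^{3,t}(Y)$ that are $(\ell,t,C_0,\alpha)$-good for $(X,Y)$. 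Using the $(\varepsilon,d)$-regularity of each pair $(Y,Z_j)$ together with iterated slicing, a positive fraction of these good $S$ additionally satisfy $|\bigcap_{I}N(S_I)\cap Z_j|\ge(d-\varepsilon)^{2^\ell t}|Z_j|$ for every $j\in[r-3]$; I fix one such $S$, together with its witnessing subgraphs $E_1,\ldots,E_\ell\subseteq E(X)$ of average degree $\ge 2^{-\ell}\alpha C_0$, and prune each $E_i$ so that $\delta(E_i)\ge |T|$.

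Next I embed the forest $T=T_1\cup\cdots\cup T_\ell$ into $X$ respecting the orientation dictated by $S$. Every edge $e=xy\in E_i$ satisfies $e\to_i S$, which by definition means either $(x,y)\to_i S$ or $(y,x)\to_i S$; a majority argument keeps at least half the edges of $E_i$ with a consistent orientation, producing a bipartite subgraph $E_i'$ of average degree $\Omega(C_0)$ between $U_i^1:=\{v\in X:\bigcup_{I\ni i}S_I\subseteq N(v)\}$ and $U_i^0:=\{v\in X:\bigcup_{I\not\ni i}S_I\subseteq N(v)\}$. Since $T_i$ is a tree on at most $|T|$ vertices with bipartition $A_i^0\cup A_i^1$, a greedy embedding places $T_i$ into $E_i'$ with $A_i^0\to U_i^0$ and $A_i^1\to U_i^1$; doing so disjointly across $i\in[\ell]$ yields the desired copy of $T$ in $X$, in which each $T_i$ respects the bipartition condition relative to $S$.

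Finally I locate the sets $S'_j\subseteq Z_j$. The $(\varepsilon,d)$-regularity of the pairs $(Z_i,Z_j)$ together with Theorem \ref{thm:counting} yields $\Omega(|Z_1|^{t(r-3)})$ copies of $K_{r-3}[t]$ inside the subgraph on $Z_1\cup\cdots\cup Z_{r-3}$. Restricting to copies whose parts lie inside $Z_j\cap\bigcap_{I}N(S_I)\cap N(V(T))$ --- a linear-size subset of each $Z_j$ by the earlier choice of $S$ together with the $(1/2+\beta)$-degree condition for vertices in $V(T)\subseteq X$ and iterated slicing --- produces the required $S'_j$ completely adjacent across $j$ and to all $S_I$. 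The main technical obstacle is the tree embedding in the preceding paragraph: the rich/dense machinery of \cite{allen2013chromatic} is formulated for single edges of $Z_\ell^{3,t}$, whereas each $T_i$ may be a nontrivial tree. The resolution is that every witnessing graph $E_i$ inherits large minimum degree from goodness of $S$, and after enforcing the correct $S$-orientation by a majority reduction, this is enough to embed any tree of bounded size.
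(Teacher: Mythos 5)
Your proposal is not a proof of the stated lemma. \cref{lmm24} asserts the purely combinatorial implication ``$(X,Y)$ is $(C,\alpha)$-rich in copies of $Z_\ell^{r,t}$ $\Rightarrow$ $(X,Y)$ is $(C,\alpha)$-dense in copies of $Z_\ell^{r,t}$,'' where richness and density are conditions on the hierarchical edge sets $D(\mathbf{e}^j)\subseteq E(X)$ and on families $S\in\mathcal{F}_\ell^{r,t}(Y)$. Its hypothesis is already that the pair is rich; there is no chromatic-number assumption, no sets $Z_1,\ldots,Z_{r-3}$, no $(\varepsilon,d)$-regularity, and no embedding of a forest $T$. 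What you have sketched is instead the appendix embedding argument for \cref{lmm:lmm5.3} (equivalently \cref{lmm:0-1/2}), and your sketch explicitly invokes \cref{lmm24} as a black box in the sentence ``Lemma \ref{lmm24} upgrades richness to density.'' As a proof of \cref{lmm24} this is therefore circular, and more fundamentally it addresses the wrong statement.

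A correct argument for \cref{lmm24} is a downward induction on the level $q$ of the hierarchy: one shows that for each node $\mathbf{e}^q$ of the tree, at least $2^{-(\ell-q)}\alpha|Y|^s$ families $S$ are $(\ell,t,C,\alpha)$-good for $\mathbf{e}^q$. The base case $q=\ell$ is exactly property (b) of richness. For the inductive step one uses that each predicate $e_i\to_i S$ depends only on the single edge $e_i$ and on $S$, so ``$\mathbf{e}^\ell\to S$'' factors as a product condition $\bigwedge_i(e_i\to_i S)$; this lets one apply an averaging/pigeonhole step over $e_{q+1}\in D(\mathbf{e}^q)$, using $\delta(D(\mathbf{e}^q))>C$ to extract a witnessing set $E_{q+1}$ of average degree at least $2^{-\ell}\alpha C$ while losing only a factor $2$ in the count of good families. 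The case $q=0$ gives the density statement. None of this machinery appears in your proposal.
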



Combining \cref{prop:prop26} and \cref{lmm24}, we can substitute the condition ``$(X, Y)$ is $(C^*, \alpha)$-dense in copies of $Z_{\ell^*}^{3, t^*}$'' of Proposition 36 in \cite{allen2013chromatic} with  ``$|N(x) \cap Y| \geq \beta|Y|$ for every $x \in X$''.
We will use the proof of Proposition 36 in \cite{allen2013chromatic} to prove \cref{lmm:0-1/2}, omitting the repetitive parts.

\begin{proof}[Proof of Lemma \ref{lmm:0-1/2}]
   Let $G$ be a graph, and let $X,Y$ and $Z_1,\ldots, Z_{r-3}$ be disjoint subsets of $V(G)$, with $\chi(G[X])\ge C$ and $|Y|=|Z_j|$ for each $j \in[r-3]$.
   Let $Z:=Z_1 \cup \cdots \cup Z_{r-3}$. Suppose that $(Y, Z_j)$ and $(Z_i, Z_j)$ are $(\varepsilon, d)$-regular for each $i \neq j$, and that
\begin{align*}
|N(x) \cap Y| \geq \beta|Y| \quad \text{and} \quad |N(x) \cap Z_j| \geq (1/2+\beta)|Z_j|
\end{align*}
for every $x \in X$ and $j \in[r-3]$.

According to Lemma 37 and Claim 39 in \cite{allen2013chromatic}, there exists a copy $Q$ of $K_{r-3}[t]$ with $t$ vertices in $Z_j$ for each $j \in[r-3]$ and a function $S \in \mathcal{F}_{\ell}^{r, t}(Y\cup Z)$ such that the following holds.
\begin{itemize}
    \item The sets $S_j, j \in[r-3]$, are the parts of $Q$;
    \item for each $I \subseteq [\ell]$, $S_I\subseteq N(Q) \cap Y$ with $|S_I|=t$;
    \item there exist sets $E_1, \ldots, E_{\ell} \subseteq E(X)$, with $\bar{d}\left(E_i\right) \geq 8|T|$ for each $1 \leq i \leq \ell$, such that for every $e_1 \in E_1, \ldots, e_{\ell} \in E_{\ell}$, we have $\mathbf{e}^{\ell} \rightarrow S$;
    \item $S_j$ is completely adjacent to each $S_{j^{\prime}}$ with $j \neq j^{\prime}$, to each $S_I$, and to each edge $e \in \bigcup_{i \in [\ell]} E_i$. 
\end{itemize}

Now we can embed $Z_\ell^{r,t}(T_1,\ldots,T_\ell)$ via hierarchical selection.
For each $i \in[\ell]$ and each edge $e \in E_i$, let $e=x y$ be such that $(x, y) \rightarrow_i S$, and orient the edge $e$ from $x$ to $y$. 
 For each $i \in[\ell]$, by choosing a maximal bipartite subgraph of $E_i$, and then removing at most half the edges, we can find a set $E_i^{\prime} \subseteq E_i$ such that,
\begin{itemize}
    \item[\rm (a)] $E_i'$ is bipartite, with bipartition $A_i^0\cup A_i^1$,
    \item[\rm (b)] every edge $e \in E_i'$ is oriented from $A_i^0$ to $A_i^1$, and
    \item[\rm (c)] $\bar{d}(E_i') \geq 2 |T|$.
\end{itemize}
Thus, by \cref{fact:4}, there exists, for each $i \in[\ell]$, a copy $T_i^{\prime}$ of $T_i$ in the induced subgraph
  $$E_i'-(V(T_1') \cup \cdots \cup V(T_{i-1}')),$$
since removing a vertex can only decrease the average degree by at most two.
Hence, by the definition of $S \in \mathcal{F}_{\ell}^{r, t}(Y\cup Z)$, 
these trees together with $S_I$ for each $I\subseteq [\ell]$ and $S_j$ for each $j\in[r-3]$, form a copy of $Z_\ell^{r,t}(T_1,\ldots,T_\ell)$ in $G$, so we are done.
\end{proof}

\end{document}